\theoremstyle{plain}
\newtheorem{theorem}{Theorem}[section]
\newtheorem{lemma}[theorem]{Lemma}
\newtheorem{proposition}[theorem]{Proposition}
\newtheorem{corollary}[theorem]{Corollary}
\newtheorem{claim}[theorem]{Claim}
\theoremstyle{remark}
\newtheorem{remark}[theorem]{Remark}
\numberwithin{equation}{section}
\numberwithin{theorem}{section}
\newcommand{\supp}{\operatorname{supp}}
\newcommand{\dist}{\operatorname{dist}}
\begin{document}

\subjclass[2010]{Primary 11P21, 42B10, 42B20. Secondary 11L07}
\keywords{Lattice points, convex domains, Fourier transform, Van der
Corput's method.}

\date{}

\title[Lattice Points]{Lattice points in rotated convex domains}

\author[J.W. Guo]{Jingwei Guo}

\address{Jingwei Guo\\Department of Mathematics\\
University of Illinois at Urbana-Champaign\\
Urbana, IL 61801, USA}

\email{jwguo@illinois.edu}

\thanks{}

\begin{abstract}
If $\mathcal{B}\subset \mathbb{R}^d$ ($d\geqslant 2$) is a compact
convex domain with a smooth boundary of finite type, we prove that
for almost every rotation $\theta\in SO(d)$ the remainder of the
lattice point problem, $P_{\theta \mathcal{B}}(t)$, is of order
$O_{\theta}(t^{d-2+2/(d+1)-\zeta_d})$ with a positive number
$\zeta_d$. Furthermore we extend the estimate of the above type, in
the planar case, to general compact convex domains.
\end{abstract}

\maketitle

%%%%%%%%%%%%%%%%%%%%%%%%%%%%%%%%%%%%%%%%%%%%%%%%%%%%%%%%%%%%%%%%%%%%%%%%%
% end Topmatter
%%%%%%%%%%%%%%%%%%%%%%%%%%%%%%%%%%%%%%%%%%%%%%%%%%%%%%%%%%%%%%%%%%%%%%%%%

\section{Introduction}\label{introduction}

Let $\mathcal{B}\subset \mathbb{R}^d$ ($d\geqslant 2$) be a compact
convex domain, which contains the origin in its interior and has a
smooth boundary $\partial \mathcal{B}$. The number of lattice points
$\mathbb{Z}^d$ in the dilated domain $t\mathcal{B}$ is approximately
$|t\mathcal{B}|$ ({\it i.e.} the volume (area if $d=2$) of
$t\mathcal{B}$) and the lattice point problem is to study the
remainder, $P_{\mathcal{B}}(t)$, in the equation
\begin{equation*}
P_{\mathcal{B}}(t)=\#(t\mathcal{B}\cap\mathbb{Z}^d)-|\mathcal{B}|t^d
\quad \textrm{for $t\geqslant 1$}.
\end{equation*}
A trivial estimate gives $P_{\mathcal{B}}(t)=O(t^{d-1})$.

If $\partial \mathcal{B}$ has everywhere positive (Gaussian)
curvature, a standard estimate is
\begin{equation*}
P_{\mathcal{B}}(t)=O(t^{d-2+2/(d+1)}),
\end{equation*}
which can be readily obtained by a combination of the Poisson
summation formula and (nowadays standard) oscillatory integral
estimates (see Hlawka~\cite{Hlawka}). Over the years this result has
been improved by many authors and the best bounds up-to-date are due
to Huxley~\cite{huxley2003} in the planar case and Guo~\cite{guo} in
the higher dimensional case. For a survey on historical results the
reader is referred to Ivi\'c, Kr\"atzel, K\"uhleitner, and
Nowak~\cite{survey2004}.

While the above case is relatively well understood, the general case
when the (Gaussian) curvature is allowed to vanish is not.

Let us first consider the $d\geqslant 3$ case of vanishing
curvature. Partial results indicate that the remainder may become
much larger. For example Randol~\cite{randol_II} considered the
super spheres
\begin{equation*}
\mathcal{B}=\{(x_1, \ldots, x_d)\in \mathbb{R}^d :
|x_1|^{\omega}+|x_2|^{\omega}+\cdots+|x_d|^{\omega}\leqslant 1\}
\end{equation*}
for even integer $\omega\geqslant 3$, and proved that
\begin{equation*}
P_{\mathcal{B}}(t)=\bigg\{ \begin{array}{ll}
O(t^{d-2+2/(d+1)}) & \textrm{for $\omega\leqslant d+1$},\\
O(t^{(d-1)(1-1/\omega)}) & \textrm{for $\omega>d+1$},
\end{array}
\end{equation*}
and this estimate is the best possible when $\omega>d+1$.
Kr\"atzel~\cite{kratzel_odd} extended this result to odd
$\omega\geqslant 3$ and gave an asymptotic formula
\begin{equation}
P_{\mathcal{B}}(t)=H(t)t^{(d-1)(1-1/\omega)}+O(t^\Theta)
\label{asymptotics}
\end{equation}
with an explicit $\Theta<(d-1)(1-1/\omega)$ and $H(t)$ continuous
and periodic (see Kr\"atzel~\cite{kratzel} for more details). We
observe that the remainder $P_{\mathcal{B}}(t)$ becomes extremely
large as $\omega\rightarrow \infty$.

This observation is supported by the study of more examples, and
special attention is paid to specific convex domains in
$\mathbb{R}^3$. See Kr\"atzel~\cite{kratzel_2002} and Kr{\"a}tzel
and Nowak~\cite{K-N-2008,K-N-2011}, in which they proved, among
other results, asymptotic formulas of $P_{\mathcal{B}}(t)$ with
explicit representations of the main terms given.

For general domains with boundary points of Gaussian curvature zero,
our knowledge is still very poor. Partial results in $\mathbb{R}^3$
are available in Kr\"atzel~\cite{kratzel_2000, kratzel_2002_MM},
Peter~\cite{peter_2002}, Popov~\cite{popov}, and
Nowak~\cite{nowak_2008} (with the latter two papers focusing on
bodies of rotation). Under a variety of assumptions, they provide
$O$-estimates (or asymptotic formulas) of $P_{\mathcal{B}}(t)$, and
evaluate the contributions (to $P_{\mathcal{B}}(t)$) of different
types of boundary points of Gaussian curvature zero.  Their results
show that the size of $P_{\mathcal{B}}(t)$ depends on certain
properties of the boundary points of Gaussian curvature zero and
whether the slope of the normal at such a point is rational or
irrational. In particular $P_{\mathcal{B}}(t)$ may become extremely
large and a substantial contribution to it is due to the
neighborhoods of those boundary points of Gaussian curvature zero at
which the normal has a rational direction.

However after a rotation of the domain there may be no such points,
hence we can expect a better estimate. For example Iosevich, Sawyer,
and Seeger~\cite{I-S-S-0} proved, for convex domains of finite type
(\footnote{Check \S \ref{geometry} for its definition.}), that there
is $r>2$ so that
\begin{equation}
P_{\mathcal{B}_\theta}(t)=O_{\theta}(t^{d-2+2/(d+1)}\log^{1/r}(2+t))
\quad \textrm{for a.e. $\theta\in SO(d)$}, \label{ISS_result}
\end{equation}
where $\mathcal{B}_{\theta}=\theta \mathcal{B}$ denotes the rotated
domain $\{\theta x : x\in \mathcal{B}\}$. Results of type
\eqref{ISS_result} with the same exponent $d-2+2/(d+1)$ can be found
in Randol~\cite{randol_R^n} for convex domains with an analytic
boundary, and in Colin de Verdi{\`e}re~\cite{colin} for general (not
necessarily convex) domains if $d\leqslant 7$.

 It
is then natural to ask whether one can prove a result of type
\eqref{ISS_result} with an exponent $d-2+2/(d+1)-c$ for some
positive $c$. We made a progress in this direction and proved the
following theorem with a $c>0$ depending only on the dimension $d$.

\begin{theorem}\label{Rd-ae}
Let $\mathcal{B}\subset \mathbb{R}^d$ ($d\geqslant 3$) be a compact
convex domain containing the origin in its interior. If the boundary
is a smooth hypersurface of finite type then
\begin{equation*}
P_{\mathcal{B}_\theta}(t)=O_{\theta}(t^{d-2+2/(d+1)-\zeta_d}) \quad
\textrm{for a.e. $\theta\in SO(d)$},
\end{equation*}
where $\zeta_d>0$ is defined as
\begin{equation}
\zeta_d=\left\{ \begin{array}{ll} \frac{2(d-2)(d-1)d}{(d+1)(6d^5+118d^4+109d^3-210d^2-119d+82)} & \textrm{for $3\leqslant d\leqslant 4$},\\
\frac{(d-3)(d-1)d}{2d^6+49d^5+123d^4-9d^3-167d^2-52d+30} &
\textrm{for $d\geqslant 5$}.
\end{array}\right. \label{zeta_d}
\end{equation}
\end{theorem}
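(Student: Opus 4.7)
The plan is to apply the Poisson summation formula (to a smoothed version of $\chi_{\mathcal{B}_\theta}$) in order to reduce the problem to the estimation of lattice sums of the Fourier transform, and then to combine the rotational averaging strategy of \cite{I-S-S-0} with the Van der Corput refinements that Guo~\cite{guo} introduced for the fixed, positively-curved problem. After the usual regularization and truncation, the main object of study is
\[
\sum_{0<|m|\leqslant T} t^{d}\,\widehat{\chi_{\mathcal{B}_{\theta}}}(tm),
\]
for an appropriate cutoff $T=T(t)$. A dyadic decomposition $|m|\sim R$ leaves the pieces $S_{R,t,\theta}=\sum_{|m|\sim R}t^{d}\,\widehat{\chi_{\mathcal{B}_{\theta}}}(tm)$, and the goal becomes to bound each $S_{R,t,\theta}$ with a power saving over the exponent $d-2+2/(d+1)$.

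First I would insert the stationary-phase expansion of $\widehat{\chi_{\mathcal{B}}}$, writing it through the support function $h_\theta$ of $\mathcal{B}_\theta$. Away from directions where the Gaussian curvature vanishes one has the clean amplitude--phase formula $\widehat{\chi_{\mathcal{B}_{\theta}}}(tm)\approx (t|m|)^{-(d+1)/2}\,a_{\theta}(m/|m|)\,e^{\pm 2\pi i t\,h_{\theta}(m/|m|)}$, whereas near degenerate directions the decay is weaker and controlled by the type of the boundary. Following \cite{I-S-S-0}, the contribution from the degenerate portion is absorbed by an $L^{p}(SO(d))$ averaging over rotations; a Chebyshev/Borel--Cantelli argument along a dyadic sequence $t_{j}=2^{j}$ yields, for a.e.\ $\theta$, the base exponent $d-2+2/(d+1)$ from this piece.

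To beat this exponent by $\zeta_d$, I would apply the Van der Corput method (either the $A$- and $B$-processes directly, or via exponent pairs $(\kappa,\lambda)$ better than the trivial $(0,1)$) to the non-degenerate sum $\sum_{|m|\sim R}R^{-(d+1)/2}\,a_{\theta}(m/|m|)\,e^{\pm 2\pi i t\,h_{\theta}(m/|m|)}$. Differencing and dual Poisson summation in $m$, combined with the nondegeneracy of the Hessian of $h_\theta$ guaranteed by the strict convexity of $\mathcal{B}$, produce bounds $|S_{R,t,\theta}|\lesssim R^{\alpha}t^{\beta}$ with $(\alpha,\beta)$ strictly better than what stationary phase alone gives. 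Balancing this Van der Corput bound against the $L^{p}$-averaging bound from the previous step, and jointly optimizing the truncation $T$, the exponent pair, and the $L^{p}$ index, produces the quantitative saving $\zeta_d$; the two-regime structure of \eqref{zeta_d} with a change at $d=5$ reflects that the optimal exponent pair and averaging index differ between low and high dimensions.

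The main obstacle will be twofold. First, one must execute the multi-dimensional Van der Corput calculus while retaining uniformity in $\theta$: each $B$-process produces a dual oscillatory sum whose phase depends smoothly on $\theta$, and the hypotheses on mixed partial derivatives needed for the relevant multi-dimensional Van der Corput estimates must hold on a set of $\theta$ of full measure. Second, the constrained optimization that yields the precise form of $\zeta_d$ in \eqref{zeta_d} requires simultaneously tuning $T$, the dyadic scale $R$, the $L^{p}$ index for rotational averaging, and the exponent pair; verifying that all the error terms combine exactly to produce the stated power saving, rather than a weaker one, is where most of the technical bookkeeping will lie.
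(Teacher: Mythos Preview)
Your overall strategy matches the paper's: smooth, apply Poisson, split the dual sum according to whether the Gaussian curvature in the direction $m/|m|$ lies above or below a threshold $\delta$, handle the low-curvature part (Sum~II) by rotational averaging via Svensson's $L^p$ bound for $\Phi$ together with a measure estimate for the bad set (Lemma~\ref{s2-lemma3}), and handle the high-curvature part (Sum~I) by a multidimensional $A^qB$-process. The balancing of $\delta$, the smoothing scale $\varepsilon$, and the Van der Corput gain is exactly what produces $\zeta_d$, and the switch at $d=5$ reflects the choice $q=2$ versus $q=1$.

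There is, however, a real gap in your Van der Corput step. You assert that the $B$-process succeeds because of ``the nondegeneracy of the Hessian of $h_\theta$ guaranteed by the strict convexity of $\mathcal{B}$.'' This is false: since $H_\theta$ is positively homogeneous of degree one, $\nabla^2 H_\theta(\xi)$ always annihilates $\xi$ and hence has rank exactly $d-1$ (Lemma~\ref{lemma4:2}). A direct Poisson $B$-process on $\sum_m e(tH_\theta(m))$ therefore fails. The paper's resolution, following M\"uller, is to first perform $q$ Weyl differencings in specially chosen lattice directions $v_1,\ldots,v_d\in\mathbb{Z}^d$; after this the relevant nondegeneracy condition becomes a lower bound on the $d\times d$ determinant $h_q^\theta$ built from $(q{+}2)$-nd order mixed derivatives of $H_\theta$. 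Proving that lower bound, with explicit polynomial dependence on $K_\xi^\theta$ and uniformly over a cone of directions, is the content of Lemma~\ref{non-vanishinglemma} and is the main technical input; it requires choosing the $v_j$ adapted to the principal curvature directions of $\partial\mathcal{B}_\theta$ at $x^\theta(\xi)$ and then approximating by integer vectors at a scale dictated by $K_\xi^\theta$. Your proposal does not identify this obstruction or its resolution, and invoking one-dimensional exponent pairs does not bypass it.
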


This result is an easy consequence of the following theorem.

\begin{theorem}\label{Rd-finitetype}
Let $\mathcal{B}\subset \mathbb{R}^d$ ($d\geqslant 3$) be a compact
convex domain containing the origin in its interior. If the boundary
is a smooth hypersurface of finite type $\omega$ then
\begin{equation*}
\sup\limits_{t\geqslant 2}
|P_{\mathcal{B}_\theta}(t)|/(t^{d-2+2/(d+1)-\zeta_d-\sigma(d,
\omega)}\log^{b}(t))\in  L^1(SO(d)),
\end{equation*}
where $b>1$, $\zeta_d$ is given by \eqref{zeta_d}, and $\sigma(d,
\omega)>0$ is defined as
\begin{equation}
\sigma(d, \omega)=\left\{ \begin{array}{ll}
\frac{4d(6d^5+100d^4-230d^3-193d^2+496d-172)}{(6d^5+118d^4+109d^3-210d^2-119d+82)\cdot\Box}
& \textrm{for
$3\leqslant d\leqslant 4$},\\
\frac{2d(2d^5+39d^4-105d^3-205d^2+377d-96)}
{(2d^5+47d^4+76d^3-85d^2-82d+30)\cdot\triangle}& \textrm{for
$d\geqslant 5$},
\end{array}\right.\label{sigma_d_w}
\end{equation}
with
\begin{equation}
\begin{split}
\Box&=6(\omega-2)d^5+118(\omega-2)d^4+109(\omega-2)d^3\\
&\quad -6(35\omega-71)d^2+(246-119\omega)d+(82\omega-156)
\end{split}\label{square}
\end{equation}
and
\begin{equation}
\begin{split}
\triangle&=2(\omega-2)d^5+47(\omega-2)d^4+76(\omega-2)d^3\\
&\quad +(172-85\omega)d^2+(166-82\omega)d+(30\omega-56).
\end{split}\label{triangle}
\end{equation}
\end{theorem}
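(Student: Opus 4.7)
The plan is to build on the Fourier-analytic framework of Iosevich--Sawyer--Seeger~\cite{I-S-S-0} and refine their pointwise bounds via high-dimensional van der Corput exponent pair estimates in the spirit of Guo~\cite{guo}. First I smooth the indicator $\chi_{t\mathcal{B}_\theta}$ by convolution with a bump $\rho_\delta$ of scale $\delta>0$; the difference between the smoothed and unsmoothed lattice counts is controlled by the number of lattice points in a $\delta$-neighbourhood of $\partial(t\mathcal{B}_\theta)$ and is $O(t^{d-1}\delta)$. Poisson summation on the smoothed part, together with the scaling relation $\widehat{\chi_{t\mathcal{B}_\theta}}(\xi)=t^d\widehat{\chi_\mathcal{B}}(t\theta^{-1}\xi)$, reduces
$$P_{\mathcal{B}_\theta}(t)=t^d\sum_{k\neq 0}\widehat{\chi_\mathcal{B}}(t\theta^{-1}k)\widehat{\rho}(\delta k)+O(t^{d-1}\delta),$$
and I decompose this sum dyadically in $|k|\sim N$, with the relevant range being $N\lesssim \delta^{-1}(\log t)^{A}$ (beyond which $\widehat{\rho}$ contributes negligibly).

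The next step is an $SO(d)$-averaging for each dyadic block. Standard stationary phase gives $|\widehat{\chi_\mathcal{B}}(\xi)|\lesssim |\xi|^{-(d+1)/2}$ at normals of positive Gaussian curvature, degraded to at most $|\xi|^{-(d-1)/2-1/\omega}$ at the finite-type flat directions. Averaging in $\theta$ controls the contribution of rotations placing $\theta^{-1}k$ near a flat normal (the measure of such rotations is polynomially small, with $\omega$-dependent rate), and combined with the dyadic shell volume $\sim N^{d-1}$ and an optimised choice of $\delta$ this recovers the classical exponent $d-2+2/(d+1)$ of~\cite{I-S-S-0}. To break below this threshold, I localise each dyadic block to pieces of $\partial\mathcal{B}$ near the corresponding normal direction and recognise the contribution as an exponential sum whose phase encodes the principal curvature data. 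Applying an iterated van der Corput exponent pair optimised as in Guo~\cite{guo} -- with the number of admissible $A$-processes dependent on the dimension, hence the case split at $d=5$ in the definition of $\zeta_d$ -- yields the extra saving $\zeta_d$ of \eqref{zeta_d}. The finite-type correction $\sigma(d,\omega)$ of \eqref{sigma_d_w} arises from balancing the slower Fourier decay near flat points against the exponent pair savings, and the explicit polynomial expressions \eqref{square} and \eqref{triangle} record the outcome of that balance.

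Finally, the supremum $\sup_{t\geq 2}$ is controlled by a dyadic decomposition in $t$ absorbed by the weight $\log^b t$ with $b>1$, and Fubini then delivers the $L^1(SO(d))$ conclusion. The main obstacle is the exponent pair step: the van der Corput $A$ and $B$ processes must be carried out on oscillatory integrals whose phases depend on $\theta$, and the resulting constants must remain integrable over $SO(d)$. Because the local geometry of $\partial\mathcal{B}$ near a flat boundary point changes character under rotation, the polynomial bound on the measure of exceptional rotations must be matched to the exponent pair saving at every dyadic scale. This double optimisation -- over the admissible exponent pair and over the smoothing scale $\delta$, subject to the finite-type Fourier loss -- is precisely what forces the particular rational forms appearing in $\zeta_d$ and $\sigma(d,\omega)$.
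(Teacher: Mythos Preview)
Your overall framework matches the paper's: smoothing plus Poisson summation, a split according to whether the normal direction $k/|k|$ hits a region of large or small Gaussian curvature, and an $A^qB$ van der Corput process on the large-curvature piece (with $q=2$ for $d\in\{3,4\}$ and $q=1$ for $d\geq 5$, exactly the case split you identify). Two genuine gaps remain, however.

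First, the van der Corput step needs a uniform lower bound on a $(q+2)$-nd order Hessian determinant of the phase $tH_\theta(k)$. The support function $H_\theta$ is only smooth where the curvature is positive, and its higher derivatives blow up like negative powers of $K_\xi^\theta$ as one approaches flat directions, so ``applying an iterated exponent pair'' is not automatic. The paper devotes Lemma~\ref{non-vanishinglemma} to constructing, for each $\xi$ with $K_\xi^\theta>0$, explicit \emph{integer} vectors $v_1,\dots,v_d$ (with lengths controlled by negative powers of $K_\xi^\theta$) along which the relevant determinant $h_q^\theta$ is bounded below; only after this, plus a cone decomposition into regions of comparable curvature, can Proposition~\ref{exp-sum} be applied. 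Your sketch does not supply this ingredient, and without it the exponent-pair saving is unjustified. Second, for the small-curvature piece you invoke a pointwise Fourier decay $|\xi|^{-(d-1)/2-1/\omega}$ at flat normals; no such uniform bound holds for general finite-type hypersurfaces in $d\geq 3$. The paper instead uses Svensson's theorem that $\Phi(\xi)=\sup_{r>0} r^{(d+1)/2}|\widehat{\chi}_{\mathcal{B}}(r\xi)|\in L^p(S^{d-1})$ for $p<2+2/((d-1)(\omega-2))$, together with the measure estimates of Lemmas~\ref{s2-lemma2} and~\ref{s2-lemma3} for the set of small-curvature normals, to bound Sum~II after integrating over $SO(d)$. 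The balance producing $\sigma(d,\omega)$ is between this $L^p$ input and the exponent-pair gain on Sum~I, not between pointwise decay rates.
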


The proof of Theorem \ref{Rd-finitetype} relies on the following
analysis result (implied by Svensson's \cite[Theorem
4.1]{svensson}): if $\mathcal{B}\subset \mathbb{R}^d$ ($d\geqslant
3$) is a compact convex domain and its boundary is a smooth
hypersurface of finite type $\omega$ (\footnote{The restriction on
the size of $\omega$ given by Svensson's \cite[Theorem
4.1]{svensson} can be removed under current assumptions.}) then
\begin{equation}
\Phi\in L^p(S^{d-1})\qquad \textrm{for $2<p< 2+2/(d-1)(\omega-2)$},
\label{svenssonThm}
\end{equation}
where
\begin{equation}
\Phi(\xi)=\sup\limits_{r>0}
r^{(d+1)/2}|\widehat{\chi}_{\mathcal{B}}(r\xi)|, \qquad
\textrm{$\xi\in S^{d-1}$}. \label{def-Phi}
\end{equation}

For a general convex domain $\mathcal{B}$ with a smooth boundary,
\eqref{svenssonThm} is not necessarily true, however, we always have
(due to Varchenko's \cite[Theorem 8]{varchenko}) that
\begin{equation*}
r^{(d+1)/2}|\widehat{\chi}_{\mathcal{B}}(r\xi)|\in L^2(S^{d-1}).
\end{equation*}
By using this result we can readily modify the proof of Theorem
\ref{Rd-finitetype} and prove the following theorem, which improves
similar results contained in Randol~\cite[p.~285]{randol_R^n} and
Varchenko's \cite[Theorem 7]{varchenko} in terms of the estimate.
\begin{theorem}\label{Rd-varchenkoplus}
Let $\mathcal{B}\subset \mathbb{R}^d$ ($d\geqslant 3$) be a compact
convex domain containing the origin in its interior. If the boundary
is a smooth hypersurface then
\begin{equation*}
|P_{\mathcal{B}_\theta}(t)|/t^{d-2+2/(d+1)-\zeta_d}\in L^1(SO(d)),
\end{equation*}
where $\zeta_d$ is given by \eqref{zeta_d}.
\end{theorem}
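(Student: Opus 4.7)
The plan is to recycle the proof architecture of Theorem~\ref{Rd-finitetype}, substituting Varchenko's $L^2(S^{d-1})$ bound on $\Phi$ for Svensson's $L^p(S^{d-1})$ bound (with $p>2$). The skeleton is: introduce a smooth mollifier $\psi_\epsilon$ on scale $\epsilon>0$, apply Poisson summation to $\chi_{t\mathcal{B}_\theta} * \psi_\epsilon$ to identify
$$P_{\mathcal{B}_\theta}(t) = t^d \sum_{k\neq 0}\widehat{\chi_{\mathcal{B}}}(t\theta^{-1}k)\, \widehat{\psi}(\epsilon k) + O(t^{d-1}\epsilon),$$
decompose the sum dyadically over $|k|\sim N \lesssim \epsilon^{-1}$, insert the pointwise bound $|\widehat{\chi_{\mathcal{B}}}(r\xi)| \leq r^{-(d+1)/2}\Phi(\xi/|\xi|)$ coming from \eqref{def-Phi}, and then treat the dyadic exponential sums by the same van der Corput / exponent-pair arithmetic used to prove Theorem~\ref{Rd-finitetype}.

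The one structural change is in the angular average. In Theorem~\ref{Rd-finitetype}, each dyadic block of the form
$$\int_{SO(d)} \bigg|\sum_{|k|\sim N} c_k\, \Phi(\theta^{-1} k/|k|)\bigg|\, d\theta$$
is estimated by H\"older with an exponent $p>2$, pairing $\|\Phi\|_{L^p(S^{d-1})}$ from \eqref{svenssonThm} with a dual norm on the lattice side; this is what produces the extra saving $\sigma(d,\omega)$. Under the current hypotheses we may only take $p=2$ (Cauchy--Schwarz), but the left-invariance of Haar measure on $SO(d)$ yields, for each fixed $\xi\in S^{d-1}$,
$$\int_{SO(d)} \Phi(\theta^{-1}\xi)^2\, d\theta = c_d\,\|\Phi\|_{L^2(S^{d-1})}^2 < \infty,$$
which is all the dyadic averaging really needs to push through the main exponent $d-2+2/(d+1)-\zeta_d$. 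Balancing $\epsilon$ against the Poisson-truncation error as in the finite-type case then gives the claimed $L^1(SO(d))$ bound.

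The main obstacle I foresee is checking that the $p=2$ endpoint is genuinely admissible at every step of the finite-type argument that strictly used $p>2$ — namely, that the dual lattice-side norm at $p'=2$ remains summable. The dyadic annulus $|k|\sim N$ carries $\approx N^{d-1}$ lattice points, and the weights $(tN)^{-(d+1)/2}$ combined with the $\zeta_d$-gain from van der Corput should produce a geometrically decaying dyadic total, so the $\ell^2$ check ought to pass. The two features sacrificed at the endpoint are precisely those absent from the statement: the extra exponent gain $\sigma(d,\omega)$, which required $p$ strictly above $2$, and the uniform-in-$t$ supremum inside $L^1(SO(d))$, which would have demanded summability in $t$ of the dyadic bounds along a Borel--Cantelli scheme.
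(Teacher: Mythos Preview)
Your high-level plan --- replace the finite-type input by Varchenko's $\Phi\in L^2(S^{d-1})$ and apply Cauchy--Schwarz in the rotation average --- is exactly the paper's, and your remarks about losing $\sigma(d,\omega)$ and the $\sup_t$ are on target. But your description of \emph{where} $\Phi$ enters, and hence the ``main obstacle'' you worry about, is off.

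The paper does not bound an entire dyadic block by $\Phi$ and pair it with a lattice $\ell^{p'}$ norm; doing that would throw away the phase and recover only the standard exponent $d-2+2/(d+1)$, with no $\zeta_d$. The argument instead splits $\sum_{k\neq 0}\widehat{\chi}_{\mathcal{B}_\theta}(tk)\widehat{\rho}(\varepsilon k)$ according to a curvature threshold $\delta$: Sum~I runs over $k$ with $K_{\pm k/|k|}^{\theta}\geqslant\delta$ and is handled via the asymptotic expansion of $\widehat{\chi}_{\mathcal{B}}$ (Corollary~\ref{s4-asym}) together with the van der Corput machinery --- this is where $\zeta_d$ comes from and nothing here changes; Sum~II runs over the low-curvature cone $D_2(\delta,\theta)$ and is the \emph{only} place $\Phi$ is invoked. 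In the finite-type proof, Sum~II is handled by Svensson's \emph{pointwise} bound $\Phi(\xi)\lesssim K_\xi^{-1/2}$ plus Lemma~\ref{s2-lemma3}, not by H\"older with $p>2$; the $L^p$ statement \eqref{svenssonThm} is a corollary, not the working tool. For the present theorem one writes instead
\[
\int_{D_2(\delta,0)\cap S^{d-1}}\Phi(\xi)\,d\xi
\;\leqslant\; \|\Phi\|_{L^2(S^{d-1})}\,\big|D_2(\delta,0)\cap S^{d-1}\big|^{1/2}
\;\lesssim\; \delta^{1/2},
\]
the first factor by Varchenko and the second by Lemma~\ref{s2-lemma2} (Sard's argument, valid with no curvature hypothesis), which gives $|\vec{n}(\{K<\delta\})|\lesssim\delta$. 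So the quantity paired with $\|\Phi\|_{L^2}$ is the \emph{measure of the bad-curvature set on $S^{d-1}$}, not a lattice $\ell^2$ norm, and the ``$p=2$ endpoint check'' you anticipate is a non-issue. Taking $\delta=t^{-\beta(d,\infty)}$ with $\beta(d,\infty)=\lim_{\omega\to\infty}\beta(d,\omega)$ then reproduces the $\omega\to\infty$ limit of the finite-type balance ($\sigma\to 0$), and the remainder of the proof is literally identical.
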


Let us now consider the $d=2$ case of vanishing curvature, in which
we have a better understanding than in higher dimensions. We refer
the interested readers to Ivi\'c, Kr\"atzel, K\"uhleitner, and
Nowak~\cite{survey2004} and Guo~\cite{guo2} for an introduction to
related results.

For general convex planar domains we know $\Phi\in L^{2,
\infty}(S^1)$ (Brandolini, Colzani, Iosevich, Podkorytov, and
Travaglini's \cite[Theorem 0.3]{brandolini}). By using this result
and the same method used in the proofs of Theorem
\ref{Rd-finitetype} and \ref{Rd-varchenkoplus}, we are able to
extend our previous result for convex planar domains of finite type
in Guo's \cite[Theorem 1.1]{guo2} to the following result for convex
planar domains with no curvature assumption on the boundary (with
even a better estimate, due to an improved estimate of certain
nonvanishing determinants given in Lemma \ref{r2-keylemma} below).

\begin{theorem}\label{R2-general}
If $\mathcal{B}$ is a compact convex planar domain with a smooth
boundary containing the origin in its interior, then
\begin{equation*}
\sup\limits_{t\geqslant 2} |P_{\mathcal{B}_\theta}(t)|/
(t^{2/3-\zeta_2}\log^{b}(t)) \in L^1(SO(2)),
\end{equation*}
where $b>1$ and $\zeta_2=1/2859$. In particular,
\begin{equation*}
P_{\mathcal{B}_\theta}(t)=O_{\theta}(t^{2/3-\zeta_2}\log^{b} (t))
\quad \textrm{for a.e. $\theta\in SO(2)$}.
\end{equation*}
\end{theorem}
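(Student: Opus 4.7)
The plan is to adapt to $d=2$ the argument sketched for Theorem \ref{Rd-finitetype}, replacing the $L^p$ integrability of $\Phi$ on $S^{d-1}$ (which required the finite-type assumption) by the weak-$L^2$ control $\Phi \in L^{2,\infty}(S^1)$ available in full generality for smooth convex planar domains. First I would mollify $\chi_{t\mathcal{B}_\theta}$ against a bump of scale $\delta$ equal to a small negative power of $t$, apply the Poisson summation formula, and split the resulting lattice sum dyadically in $|k|\sim N$ for $N \lesssim (\delta t)^{-1}$. Substituting the stationary-phase expansion of $\widehat{\chi}_{\mathcal{B}}$ reduces each dyadic shell to a weighted exponential sum whose amplitude is essentially $|k|^{-3/2}\Phi(\theta^{-1}k/|k|)$ and whose phase is $t|k|h(\theta^{-1}k/|k|)$, where $h$ is the support function of $\mathcal{B}$.

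Second, I would apply van der Corput's B-process followed by a suitable exponent pair $(\kappa,\lambda)$ to each one-dimensional angular sum obtained after a further dyadic restriction in the radius of $k$. The required derivative non-degeneracy, in particular the nonvanishing of the second-order determinant that encodes the local geometry of $\partial\mathcal{B}$, is precisely the content of Lemma \ref{r2-keylemma}; its sharpened form is what produces the specific numerical gain $\zeta_2 = 1/2859$ over the trivial exponent $2/3$, in contrast with the weaker value one would obtain from a naive version of that lemma.

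Third, to promote these pointwise-in-$\theta$ estimates to the $L^1(SO(2))$ bound on $\sup_{t\geqslant 2}$, I would integrate against $d\theta$ via a layer-cake decomposition $\Phi = \sum_j \Phi\,\mathbf{1}_{\{\Phi\sim 2^j\}}$ combined with the weak-$L^2$ control, and insert a dyadic decomposition $t \sim 2^\ell$ together with Chebyshev's inequality to handle the supremum. Each of these coupled dyadic resummations costs a logarithmic factor, yielding the weight $\log^{b}(t)$ with $b>1$. The main obstacle lies here: since $L^{2,\infty}$ barely fails to be $L^2$, the numerical margin for any exponent improvement is extremely narrow, so the balance between the smoothing scale $\delta$, the dyadic cutoff $N$, and the choice of exponent pair must be optimized very carefully, and it is only by virtue of the improved determinant estimate of Lemma \ref{r2-keylemma} that the resulting $\zeta_2$ comes out positive at all.
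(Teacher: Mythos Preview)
Your outline has the right ingredients (mollification, Poisson summation, the weak-$L^2$ bound on $\Phi$, and Lemma~\ref{r2-keylemma}) but it is missing the structural step that makes them fit together, and it misidentifies where $\Phi$ actually enters.

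The asymptotic expansion of $\widehat{\chi}_{\mathcal{B}}$ (Corollary~\ref{s4-asym}) is only valid for directions with $\min(K_\xi,K_{-\xi})>0$, and its error terms carry negative powers of this minimum; likewise Lemma~\ref{r2-keylemma} is stated for $\xi\in\vec{n}((\partial\mathcal{B}_\theta)_+)$ and all the constants degenerate as $K_\xi^\theta\to 0$. So you cannot ``substitute the stationary-phase expansion'' and then run van der Corput uniformly over all $k$; you must first fix a curvature threshold $\delta=\delta(t)$ and split the lattice sum into Sum~I over directions with $K_{\pm k}^\theta\geqslant\delta$ and Sum~II over the complementary cone. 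On Sum~I the expansion and Lemma~\ref{r2-keylemma} apply with errors polynomial in $\delta^{-1}$, and the two-dimensional $A^qB$-process of Proposition~\ref{exp-sum} (not a one-dimensional exponent pair on an ``angular sum'') gives the saving. On Sum~II one never expands $\widehat{\chi}_{\mathcal{B}}$ at all: one bounds it by $\Phi$, integrates in $\theta$, and uses $\Phi\in L^{2,\infty}(S^1)$ together with Lemma~\ref{s2-lemma2} to obtain $\int_{S^1\cap D_2(\delta,0)}\Phi\lesssim\delta^{1/2}$. The exponent $\zeta_2=1/2859$ then comes from optimizing $\delta$ and $\varepsilon$ between these two pieces.

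Your layer-cake decomposition of $\Phi$ is not an adequate substitute for this splitting: after stationary phase the amplitude in the exponential sum is $(K_k^\theta)^{-1/2}$, not $\Phi(\theta^{-1}k/|k|)$, and the function $\Phi$ plays no role on Sum~I. Conversely, on the low-curvature piece there is no exponential-sum gain available and the weak-$L^2$ bound is used as a direct size estimate, not as a device to ``promote pointwise-in-$\theta$ estimates''. Without making this dichotomy explicit and tracking the $\delta$-dependence of every constant in the Sum~I analysis, the optimization that produces the specific value $1/2859$ cannot be carried out.
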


This theorem improves Iosevich's \cite[Theorem 0.2]{iosevich} and
also Brandolini, Colzani, Iosevich, Podkorytov, and Travaglini's
\cite[Theorem 0.1]{brandolini} (in terms of the estimate). If we
assume that the boundary is of finite type then we have the
following better estimate (again due to the improved result given in
Lemma \ref{r2-keylemma} below), which improves Guo's \cite[Theorem
1.2]{guo2}.

\begin{theorem}\label{R2-finitetype}
If $\mathcal{B}$ is a compact convex planar domain with a smooth
boundary of finite type $\omega$ containing the origin in its
interior, then
\begin{equation*}
\sup\limits_{t\geqslant 2} |P_{\mathcal{B}_\theta}(t)|/
(t^{2/3-\zeta_2-\sigma(2, \omega)}\log^{b}(t)) \in L^1(SO(2)),
\end{equation*}
where $b>1$, $\zeta_2=1/2859$, and
\begin{equation*}
\sigma(2, \omega)=\frac{616}{953(953\omega-1848)}.
\end{equation*}
In particular,
\begin{equation*}
P_{\mathcal{B}_\theta}(t)=O_{\theta}(t^{2/3-\zeta_2}) \quad
\textrm{for a.e. $\theta\in SO(2)$}.
\end{equation*}
\end{theorem}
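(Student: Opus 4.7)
The plan is to follow the same Poisson-summation/dyadic-decomposition scheme used to prove Theorems \ref{Rd-finitetype} and \ref{R2-general}, specialised to the planar setting and taking advantage of the two extra inputs that distinguish this case from Theorem \ref{R2-general}: (i) the finite-type $L^p$-bound for $\Phi$ on $S^1$, and (ii) the sharpened lower bound for the nonvanishing determinants in Lemma \ref{r2-keylemma}. More precisely, after applying Poisson summation to $\chi_{\mathcal{B}_\theta}(t\,\cdot\,)$ and smoothly localising to dyadic annuli $|k|\sim R$, the estimation of $P_{\mathcal{B}_\theta}(t)$ reduces to bounding, uniformly and on average in $\theta$, sums of the shape
$$t^2\sum_{|k|\sim R}\widehat{\chi}_{\mathcal{B}}(t\theta^{-1}k),$$
for dyadic $R$ between $1$ and a trivial cutoff near $t$.

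First I would estimate each dyadic piece by taking the $SO(2)$-integral and inserting the pointwise bound
$$|\widehat{\chi}_{\mathcal{B}}(r\xi)|\leqslant r^{-3/2}\Phi(\xi),$$
so that the $\theta$-average is controlled by an $L^p(S^1)$-norm of $\Phi$. The new ingredient, valid under the finite-type hypothesis, is the planar analog of \eqref{svenssonThm} giving $\Phi\in L^p(S^1)$ for $2<p<2+2/(\omega-2)$; this strictly improves the weak-$L^2$ bound used in Theorem \ref{R2-general} and yields a quantitative saving in the medium-frequency range. For frequencies near the trivial cutoff, I would exchange the order of summation and apply a van der Corput--type transformation to the resulting exponential sum in $k$; the precise quality of cancellation there is governed by Lemma \ref{r2-keylemma}, whose improved determinant bound is what upgrades the saving beyond the constant $\zeta_2$ already available from earlier estimates.

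The whole argument then reduces to balancing three parameters: the dyadic cutoff, the exponent $p$ (chosen close to $2+2/(\omega-2)$), and the number of iterations of the van der Corput process. Following the same optimisation performed in the proof of Theorem \ref{R2-general}, but now tracking the $\omega$-dependence that enters through $p$, produces the exponent $2/3-\zeta_2-\sigma(2,\omega)$ with the stated arithmetic value of $\sigma(2,\omega)$; note that $\sigma(2,\omega)\to 0$ as $\omega\to\infty$, consistently with the general-boundary conclusion of Theorem \ref{R2-general}.

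I expect the main obstacle to be not analytic but bookkeeping: the optimisation has to be carried out so that every dyadic block is simultaneously dominated and so that the logarithmic weight $\log^b(t)$ with $b>1$ (which arises from the summation over the dyadic scales $R$) is preserved in the final $L^1(SO(2))$ bound. In particular, the explicit rational $616/[953(953\omega-1848)]$ will appear only after one has committed to a specific admissible choice of $p$ inside the open interval $(2,2+2/(\omega-2))$ and pushed the resulting system to an endpoint. The almost-everywhere pointwise statement then follows immediately from the integrability assertion.
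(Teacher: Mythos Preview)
Your proposal has the right ingredients but misidentifies the central decomposition, and this is a genuine gap. The split that drives the proof is not by frequency magnitude $|k|\sim R$ but by \emph{direction}: one introduces a curvature threshold $\delta$ and separates $\mathbb{Z}^2_*$ into $D_1(\delta,\theta)=\{k:K^{\theta}_{\pm k}\geqslant\delta\}$ and its complement $D_2(\delta,\theta)$, giving Sum~I and Sum~II. The exponential-sum machinery (Corollary~\ref{s4-asym} followed by Lemma~\ref{r2-keylemma} and the $A^qB$-process) can only be applied on Sum~I, because the asymptotic for $\widehat{\chi}_{\mathcal{B}}$ has error terms of size $\delta^{-(d+5)/2}$ and every constant in Lemma~\ref{r2-keylemma} is a negative power of $K^{\theta}_{\xi}$; without a lower bound on the curvature these steps are not uniform. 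So your plan to ``apply a van der Corput--type transformation for frequencies near the trivial cutoff'' is not well posed: the obstruction is directional, not radial, and the van der Corput argument must in fact be run on every dyadic shell $M=2^{l_0}$, but only over $k\in D_1$.

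The second miscalibration concerns where the finite-type hypothesis enters. In the paper it is not used via the $L^p(S^1)$-integrability of $\Phi$ applied to a ``medium-frequency range''; using only $|\widehat{\chi}_{\mathcal{B}}(r\xi)|\leqslant r^{-3/2}\Phi(\xi)$ and any $L^p$-norm of $\Phi$ yields nothing better than the $t^{2/3}$ baseline, since $\|\Phi\|_{L^p}$ is just a constant. Instead, finite type enters Sum~II through Lemma~\ref{s2-lemma3}: $|\{x\in\partial\mathcal{B}:K(x)<\delta\}|\lesssim\delta^{1/(\omega-2)}$, which combined with the pointwise bound $\Phi(\xi)\lesssim K_{\xi}^{-1/2}+K_{-\xi}^{-1/2}+1$ gives $\int_{SO(2)}\sup_t|\mathrm{Sum\ II}|\,d\theta\lesssim (2^j)^{-1/6+\zeta_2+\sigma(2,\omega)}\delta^{1/2+1/(\omega-2)}\varepsilon^{-1/2}$. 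The optimisation is then over the two scales $\delta=2^{-j\beta(2,\omega)}$ and $\varepsilon=2^{-j\alpha(2,\omega)}$ (with $\beta(2,\omega)=(\omega-2)/(953\omega-1848)$), not over a H\"older exponent $p$; balancing this Sum~II bound against the Sum~I estimate $\delta^{-14}t^{-3/2+1/22}\varepsilon^{-7/22}$ coming from Lemma~\ref{r2-keylemma} produces the stated $\sigma(2,\omega)$ directly, with no endpoint limit involved.
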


\begin{remark}
Our main idea originates from Iosevich, Sawyer, and
Seeger~\cite[p.~168-169]{I-S-S-0} and M\"{u}ller~\cite{mullerII}.
Our main tools used in this paper are from the oscillatory integral
theory and the classical Van der Corput's method of exponential sums
(namely, the $A$- and $B$-processes). To prove our estimate of
exponential sums (see Proposition \ref{exp-sum} below) we use an
$A^{q}B$-process. If we use more $A$- and $B$-processes we may
achieve further improvement at the cost of more technical
difficulties.
\end{remark}

{\it Notations:} We use the usual Euclidean norm $|x|$ for a point
$x\in \mathbb{R}^d$. $B(x, r)$ represents the Euclidean ball
centered at $x$ with radius $r$, and its dimension will be clear
from the context. The norm of a matrix $A\in \mathbb{R}^{d\times d}$
is given by $\|A\|=\sup_{|x|=1}|Ax|$. We set $e(f(x))=\exp(-2\pi i
f(x))$, $\mathbb{Z}_{*}^{d}=\mathbb{Z}^{d}\setminus \{0\}$, and
$\mathbb{R}^d_*=\mathbb{R}^d\setminus \{0\}$. The Fourier transform
of $f\in L^1(\mathbb{R}^d)$ is given by $\widehat{f}(\xi)=\int f(x)
e(\langle x, \xi\rangle) \, dx$.

We fix $\chi_0$ to be a smooth cut-off function whose value is $1$
on $B(0, 1/2)$ and $0$ on the complement of $B(0, 1)$. For a set
$E\subset \mathbb{R}^d$ and a positive number $a$, we define
$E_{(a)}$ to be the larger set
\begin{equation*}
E_{(a)}=\{x\in \mathbb{R}^d: \dist(E, x)<a \}.
\end{equation*}

We use the differential operators
\begin{equation*}
D^{\nu}_{x}=\frac{\partial^{|\nu|}}{\partial x_1^{\nu_1} \cdots
\partial x_d^{\nu_d}} \quad \big(\nu=(\nu_1, \ldots, \nu_d)\in
\mathbb{N}_0^d, |\nu|=\sum_{i=1}^{d}\nu_i \big)
\end{equation*}
and the gradient operator $\nabla_x$. We often omit the subscript if
no ambiguity occurs.

{\it Structure of the paper:}  We first establish some preliminaries
in \S 2-4 mainly for compact convex domains with no curvature
assumption on the smooth boundary. We then give a proof of Theorem
\ref{Rd-finitetype} in \S \ref{mainterm}, in which the problem is
reduced to the estimate of two sums (Sum I and II). The estimate of
Sum I that we give essentially works for general compact convex
domains, while the curvature condition on the boundary is used in
the estimate of Sum II. Since it is easy to modify the proof of
Theorem \ref{Rd-finitetype} to prove the other theorems, we only
provide brief proofs of Theorem \ref{Rd-varchenkoplus} in \S
\ref{mainterm} and Theorem \ref{R2-general} and \ref{R2-finitetype}
in \S \ref{R2-section}. At last we collect some standard analysis
results in Appendix \ref{app1} and prove an estimate of exponential
sums in Appendix \ref{app2} for interested readers.

%%%%%%%%%%%%%%%%%%%%%%%%%%%%%%%%%%%%%%%%%%%%%%%%%%%%%%%%%%%%%%%%%%%%%%%%%%%%%%%%%%%%%%%%%%%%%
\section{Some Geometric Facts}\label{geometry}

Assume $\mathcal{B}\subset \mathbb{R}^d$ ($d\geqslant 2$) is a
compact convex domain and its boundary is a smooth hypersurface
(curve if $d=2$). For a point $x\in
\partial \mathcal{B}$, let $K(x)$ be the (Gaussian) curvature of $\partial \mathcal{B}$ at $x$. Define
\begin{equation*}
(\partial \mathcal{B})_+=\{x\in \partial \mathcal{B}: K(x)>0\} \quad
\textrm{and} \quad (\partial \mathcal{B})_0=\{x\in \partial
\mathcal{B}: K(x)=0\},
\end{equation*}
thus
\begin{equation*}
\partial \mathcal{B}=(\partial \mathcal{B})_+ \biguplus (\partial
\mathcal{B})_0.
\end{equation*}

The Gauss map of $\partial \mathcal{B}$, denoted by $\vec{n}$, maps
each boundary point $x\in \partial \mathcal{B}$ to a unit exterior
normal $\vec{n}(x)\in S^{d-1}$. Then
\begin{equation*}
S^{d-1}=\vec{n}((\partial \mathcal{B})_+)\biguplus \vec{n}((\partial
\mathcal{B})_0).
\end{equation*}
Note that the restriction of $\vec{n}$ to $(\partial
\mathcal{B})_+$, namely
\begin{equation*}
\vec{n}|(\partial \mathcal{B})_+ : (\partial \mathcal{B})_+
\rightarrow \vec{n}((\partial \mathcal{B})_+)\subset S^{d-1},
\end{equation*}
is bijective. For $\xi\neq 0$ with $\xi/|\xi|\in \vec{n}((\partial
\mathcal{B})_+)$ let $x(\xi):=\vec{n}^{-1}(\xi/|\xi|)$ be the unique
point on $\partial \mathcal{B}$ where the unit exterior normal is
$\xi/|\xi|$. Hence $K_{\xi}=K(x(\xi))$ is well defined for such
points $\xi$.

For $\xi\neq 0$ with $\xi/|\xi|\in \vec{n}((\partial
\mathcal{B}_{\theta})_+)$ let $x^{\theta}(\xi)=\theta
x(\theta^{t}\xi)$ and $K_{\xi}^{\theta}=K_{\theta^{t}\xi}$. Then
$x^{\theta}(\xi)$ is the unique point on $\partial
\mathcal{B}_{\theta}$ where the exterior normal is $\xi$ and
$K_{\xi}^{\theta}$ is the curvature of $\partial
\mathcal{B}_{\theta}$ at $x^{\theta}(\xi)$.

\begin{lemma}\label{curv-in-ball}
Assume $\mathcal{B}\subset \mathbb{R}^d$ ($d\geqslant 2$) is a
compact convex domain and its boundary is a smooth hypersurface
(curve if $d=2$). Then there exists a constant $c_1>0$ (depending
only on $\mathcal{B}$) such that, for any $\xi \in \vec{n}((\partial
\mathcal{B})_+)$, if $\eta\in B(\xi, c_1(K_\xi)^{2})\subset
\mathbb{R}^d$ then $\eta/|\eta|\in \vec{n}((\partial
\mathcal{B})_+)$ and
\begin{equation*}
K_\xi/2\leqslant K_\eta \leqslant 3K_\xi/2.
\end{equation*}
\end{lemma}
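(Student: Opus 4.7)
The plan is to work in local graph coordinates near $x_0=x(\xi)$ and apply a quantitative inverse function theorem to the Gauss map, tracking every constant in terms of $K_\xi$.

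First I would choose an orthonormal frame so that $\vec{n}(x_0)=\xi$, parametrize $\partial\mathcal{B}$ locally as a graph $x_d=\phi(x')$ with $\phi(0)=0$ and $\nabla\phi(0)=0$, and use compactness and smoothness of $\partial\mathcal{B}$ to obtain a uniform graph neighborhood together with uniform bounds $\|D^2\phi\|\leqslant M$ and $\|D^3\phi\|\leqslant C_3$ depending only on $\mathcal{B}$. In these coordinates
\[
K(x',\phi(x'))=\frac{\det D^2\phi(x')}{(1+|\nabla\phi(x')|^2)^{(d+1)/2}},
\]
so $K_\xi=\det D^2\phi(0)$; since each principal curvature at $x_0$ is at most $M$ and all are nonnegative by convexity, the smallest principal curvature satisfies $\kappa_{\min}\geqslant K_\xi/M^{d-2}$.

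Next I would quantify the invertibility of the Gauss map. In this parametrization $\vec{n}(x')=(-\nabla\phi(x'),1)/\sqrt{1+|\nabla\phi(x')|^2}$, so its differential at $x'=0$, viewed as a linear map $\mathbb{R}^{d-1}\to T_\xi S^{d-1}\cong\mathbb{R}^{d-1}$, is $-D^2\phi(0)$ and has smallest singular value $\kappa_{\min}\gtrsim K_\xi$. The bound on $D^3\phi$ yields $\|d\vec{n}(x')-d\vec{n}(0)\|\leqslant C'|x'|$ with $C'=C'(M,C_3,d)$, so on the ball $\{|x'|\leqslant cK_\xi\}$ (with $c$ small enough, depending only on $\mathcal{B}$ and $d$) the Gauss map is a $C^1$-diffeomorphism onto its image, and that image contains a spherical cap about $\xi$ of radius comparable to $\kappa_{\min}\cdot cK_\xi\sim K_\xi^2$. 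Choosing $c_1$ so that $B(\xi,c_1 K_\xi^2)$ projects radially into this cap proves the first assertion.

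For the curvature comparison, the preimage $x'$ of $\eta/|\eta|$ satisfies $|x'|\lesssim c_1 K_\xi$, and so
\[
\|D^2\phi(x')-D^2\phi(0)\|\leqslant C_3|x'|\lesssim c_1 K_\xi.
\]
Writing $\det(H+E)=\det H\cdot\det(I+H^{-1}E)$ with $\|(D^2\phi(0))^{-1}\|\leqslant M^{d-2}/K_\xi$, a sufficiently small $c_1$ forces $|\det D^2\phi(x')-K_\xi|\leqslant K_\xi/4$; combined with $(1+|\nabla\phi(x')|^2)^{(d+1)/2}=1+O(K_\xi^2)$ on the same set, this yields $K_\xi/2\leqslant K_\eta\leqslant 3K_\xi/2$. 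The main obstacle is sharpness in the quantitative IFT: the power $K_\xi^2$ in the statement decomposes as one factor of $\kappa_{\min}\sim K_\xi$ from the admissible linearization radius times a second factor from the smallest singular value of $d\vec{n}(0)$, and compactness of $\partial\mathcal{B}$ is used to make every constant depend only on $\mathcal{B}$.
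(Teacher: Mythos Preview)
Your proposal is correct and follows essentially the same strategy as the paper: control $K$ on a boundary ball of radius $\sim K_\xi$ via a Lipschitz/mean-value argument, and apply a quantitative inverse function theorem to the Gauss map (the paper invokes its Lemma~\ref{app:lemma:1}) to show the image covers a spherical cap of radius $\sim K_\xi^2$. The only cosmetic difference is ordering---the paper first bounds $K$ on $B(x(\xi),cK_\xi)\cap\partial\mathcal{B}$ by the mean value theorem applied directly to $K$, then invokes the inverse function lemma, whereas you work explicitly in graph coordinates and bound $\det D^2\phi$---but the two arguments are interchangeable.
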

\begin{proof}
For any $\xi \in \vec{n}((\partial \mathcal{B})_+)$ it follows from
the mean value theorem that there exists a constant $c$ (depending
only on $\mathcal{B}$) such that
\begin{equation*}
K_{\xi}/2\leqslant K(y)\leqslant 3K_{\xi}/2 \quad \textrm{if $y\in
B(x(\xi), cK_{\xi})\cap \partial\mathcal{B}$}.
\end{equation*}

It is a consequence of Lemma \ref{app:lemma:1} that the Gauss map is
bijective from a subset of $B(x(\xi),
cK_{\xi})\cap\partial\mathcal{B}$ onto a subset of $S^{d-1}$
containing $B(\xi, c'(K_{\xi})^2)\cap S^{d-1}$ where the constant
$c'$ depends only on $\mathcal{B}$. Then the lemma follows easily.
\end{proof}

\begin{lemma}\label{s2-lemma2}
Assume $\mathcal{B}\subset \mathbb{R}^d$ ($d\geqslant 2$) is a
compact convex domain and its boundary is a smooth hypersurface
(curve if $d=2$). Then
\begin{equation*}
|\vec{n}(\{x\in \partial \mathcal{B}: K(x)< \delta \})|\leqslant
C_{\mathcal{B}} \delta |\{x\in \partial \mathcal{B}: 0<K(x)< \delta
\}|
\end{equation*}
where the absolute value denotes the induced Lebesgue measure on
surfaces (curves if $d=2$).
\end{lemma}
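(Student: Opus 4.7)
The plan is to combine the area formula for the Gauss map $\vec{n}:\partial\mathcal{B}\to S^{d-1}$ with Sard's theorem. The key differential-geometric input is the classical identity that the absolute value of the Jacobian determinant of $\vec{n}$ at a point $x\in\partial\mathcal{B}$ equals the Gaussian curvature $K(x)$: under the natural identification of $T_{x}\partial\mathcal{B}$ with $T_{\vec{n}(x)}S^{d-1}=\vec{n}(x)^{\perp}$, the differential $d\vec{n}_{x}$ is the Weingarten map, whose determinant is $K(x)$.

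First I would split the sublevel set
\[
E:=\{x\in\partial\mathcal{B}:K(x)<\delta\}=E_{+}\cup(\partial\mathcal{B})_{0},\qquad E_{+}:=\{x\in\partial\mathcal{B}:0<K(x)<\delta\},
\]
so that $|\vec{n}(E)|\leqslant|\vec{n}(E_{+})|+|\vec{n}((\partial\mathcal{B})_{0})|$. As recorded in the discussion preceding the lemma, convexity of $\mathcal{B}$ ensures that $\vec{n}$ is injective on $(\partial\mathcal{B})_{+}$, hence the area formula applied to $\vec{n}|E_{+}$ yields
\[
|\vec{n}(E_{+})|=\int_{E_{+}}K(x)\,dS(x)\leqslant\delta\,|E_{+}|.
\]
On the other hand, every point of $(\partial\mathcal{B})_{0}$ is by definition a critical point of the smooth map $\vec{n}$ (its Jacobian $K$ vanishes there), so Sard's theorem, applied to a smooth map between $(d-1)$-dimensional manifolds, gives $|\vec{n}((\partial\mathcal{B})_{0})|=0$.

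Adding the two bounds produces the asserted inequality, in fact with constant $C_{\mathcal{B}}=1$. No step in this argument strikes me as genuinely delicate: the area formula is applicable thanks to smoothness of $\partial\mathcal{B}$ together with the already recorded injectivity of $\vec{n}$ on $(\partial\mathcal{B})_{+}$; Sard's theorem is immediate once one identifies the critical set of $\vec{n}$ with $(\partial\mathcal{B})_{0}$; and the identification of the Jacobian of $\vec{n}$ with $K$ is a standard differential-geometric computation. The only mildly technical point worth being careful about is verifying globally (not just locally) that $\vec{n}|(\partial\mathcal{B})_{+}$ is injective, but this also appears explicitly in the preliminaries preceding the lemma.
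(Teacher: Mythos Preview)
Your argument is correct and in fact cleaner than the paper's. Both proofs split off $(\partial\mathcal{B})_0$ and dispose of its image via Sard's theorem. For the remaining piece $E_+=\{0<K<\delta\}$, however, the paper proceeds by a Vitali-type covering argument, reducing to the local estimate $|\vec{n}(B)|\leqslant C_{\mathcal{B}}\delta|B|$ for small balls $B\subset E_+$, which it then deduces from the change-of-variables identity $d\sigma=K(x)\,dA$. You instead invoke directly the global injectivity of $\vec{n}$ on $(\partial\mathcal{B})_+$ (stated in the paper's preliminaries) together with the area formula, obtaining $|\vec{n}(E_+)|=\int_{E_+}K\,dS\leqslant\delta|E_+|$ in one step. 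Your route avoids the covering lemma entirely and even yields the sharp constant $C_{\mathcal{B}}=1$, whereas the paper's localization introduces an unnecessary constant depending on $\mathcal{B}$. The only point worth making explicit is that $\vec{n}|(\partial\mathcal{B})_+$ is not merely injective but a diffeomorphism onto its image (its Jacobian $K$ is nonvanishing there), so the change-of-variables formula applies without any subtlety.
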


\begin{proof} Note that
\begin{equation*}
\{x\in \partial \mathcal{B}: K(x)< \delta \}=\{x\in \partial
\mathcal{B}: 0<K(x)<\delta \}\biguplus (\partial \mathcal{B})_0.
\end{equation*}
We first have
\begin{equation*}
|\vec{n}((\partial \mathcal{B})_0)|=0
\end{equation*}
due to Sard's theorem (see Lang~\cite[p.~286]{lang}). Hence it
suffices to prove
\begin{equation*}
|\vec{n}(\{x\in \partial \mathcal{B}: 0<K(x)< \delta \})|\leqslant
C_{\mathcal{B}} \delta |\{x\in \partial \mathcal{B}: 0<K(x)< \delta
\}|.
\end{equation*}
By using a standard technique found in the proof of certain covering
lemma of Vitali type (see Stein~\cite{stein}), we reduce the above
estimate to
\begin{equation*}
|\vec{n}(B)|\leqslant C_{\mathcal{B}} \delta |B|,
\end{equation*}
where $B\subset \{x\in \partial \mathcal{B}: 0<K(x)< \delta \}$ is a
ball in $\partial \mathcal{B}$. However this last estimate follows
from the equality $d\sigma=K(x)dA$ where $dA$ is the volume element
of $\partial \mathcal{B}$ at the point $x\in
\partial \mathcal{B}$ and $d\sigma$ the volume element of $S^{d-1}$
at the point $\vec{n}(x)\in S^{d-1}$ (see \cite[p.~47]{geometryI};
this equality can also be verified by using local coordinate
charts). This finishes the proof.
\end{proof}

We say that the boundary $\partial \mathcal{B}$ is of finite type if
at every point $x\in
\partial \mathcal{B}$, every one dimensional tangent line to
$\partial \mathcal{B}$ at $x$ makes finite order of contact with
$\partial \mathcal{B}$. If $\partial \mathcal{B}$ is of finite type,
the maximum order of contact over all $x\in
\partial \mathcal{B}$ and all tangent lines to $x\in \partial \mathcal{B}$
is called the type of $\partial \mathcal{B}$.

We will always assume below that the type is $\geqslant 3$ since if
the type is two then we recover the case of nonvanishing (Gaussian)
curvature.

\begin{lemma}\label{s2-lemma3}
Assume $\mathcal{B}\subset \mathbb{R}^d$  ($d\geqslant 2$) is a
compact convex domain and its boundary is a smooth hypersurface
(curve if $d=2$) of finite type $\omega$. Then
\begin{equation*}
|\{x\in \partial \mathcal{B}: K(x)<\delta\}|\leqslant
C_{\mathcal{B}} \delta^{1/(d-1)(\omega-2)}.
\end{equation*}
\end{lemma}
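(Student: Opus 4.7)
The plan is to reduce Lemma~\ref{s2-lemma3} to a local sublevel-set bound for $\det\nabla^2\phi$ in a graph parametrization. By compactness of $\partial\mathcal{B}$, it suffices to establish $|\{x\in W:K(x)<\delta\}|\le C\,\delta^{1/(d-1)(\omega-2)}$ on a suitably small neighborhood $W$ of an arbitrary $x_0\in\partial\mathcal{B}$; if $K(x_0)>0$ this is trivial, so I focus on $x_0\in(\partial\mathcal{B})_0$. Rotating so that $T_{x_0}\partial\mathcal{B}$ is horizontal, I parametrize $\partial\mathcal{B}$ near $x_0$ as the graph of a smooth convex function $\phi$ on a neighborhood $V\subset\mathbb{R}^{d-1}$ of the base point $y_0$, with $\nabla\phi(y_0)=0$. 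The standard formula $K(x)=\det\nabla^2\phi(y)/(1+|\nabla\phi(y)|^2)^{(d+1)/2}$ yields $K(x)\asymp\det\nabla^2\phi(y)$ on $V$, so the problem reduces to bounding sublevel sets of $g(y):=\det\nabla^2\phi(y)$.

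The heart of the proof is the following quantitative nondegeneracy statement: there exists a multi-index $\alpha$ with $|\alpha|\le(d-1)(\omega-2)$ such that $D^\alpha g(y_0)\ne 0$. Granting this, continuity yields $|D^\alpha g|\ge c/2$ on a smaller $V'\subset V$; slicing $V'$ along a coordinate direction occurring in $\alpha$ and applying the one-dimensional van der Corput sublevel-set lemma to each slice, then integrating by Fubini, produces
\[
|\{y\in V':|g(y)|<\delta\}|\le C\,\delta^{1/|\alpha|}\le C\,\delta^{1/(d-1)(\omega-2)}.
\]
Transporting back to $\partial\mathcal{B}$ via the parametrization and combining with a finite cover of $\partial\mathcal{B}$ gives the global estimate.

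To prove the nondegeneracy statement, I combine finite type with convexity. In the parametrization, finite type of $\partial\mathcal{B}$ says that for every unit $v\in\mathbb{R}^{d-1}$ some $k_v\le\omega$ satisfies $\partial_v^{k_v}\phi(y_0)\ne 0$; positive semidefiniteness of $\nabla^2\phi$ forces $k_v$ to be even and at least $4$ whenever $v$ lies in $N_0:=\ker\nabla^2\phi(y_0)$. In particular, along any line $y_0+tv$ with $v\in N_0$, the smooth function $t\mapsto v^\top\nabla^2\phi(y_0+tv)v$ vanishes at $t=0$ to order exactly $k_v-2\le\omega-2$. Splitting $\mathbb{R}^{d-1}=N_0\oplus N_0^\perp$ and writing the Hessian in block form, the Schur complement reduces $g$ (up to a factor bounded below coming from the $N_0^\perp$-block) to the determinant of a smooth PSD matrix $M(y)$ of size $\dim(N_0)$ vanishing at $y_0$, whose diagonal entries $v_i^\top M(y)v_i$ (for an orthonormal basis $\{v_i\}$ of $N_0$) vanish to order at most $\omega-2$ along the corresponding lines. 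A Leibniz expansion of $\det M$ then produces a nonzero Taylor term of total degree at most $\dim(N_0)\,(\omega-2)\le(d-1)(\omega-2)$.

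The hardest point is this last Leibniz step, where one must rule out cancellation between the diagonal contribution and that of off-diagonal entries. Positive semidefiniteness of $M$ is decisive: the Cauchy--Schwarz bound $|M_{ij}|^2\le M_{ii}M_{jj}$ controls off-diagonal entries in terms of diagonal ones at each order, and $\det M\ge 0$ globally prevents signed cancellation of the leading-order term. A judicious choice of the orthonormal basis $\{v_i\}$ (adapted to directions realizing the finite-type order at $y_0$) then ensures that the leading Taylor coefficient of $\det M$ is a nonvanishing sum of nonnegative contributions, establishing the nondegeneracy statement.
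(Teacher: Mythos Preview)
Your overall architecture is the same as the paper's: localize via compactness, pass to a graph parametrization, show that $K$ (equivalently $g=\det\nabla^2\phi$) cannot vanish to order exceeding $h=(d-1)(\omega-2)$, and then feed this into a one-dimensional sublevel-set (van der Corput) lemma together with Fubini. The paper does not prove the nondegeneracy claim itself; it simply quotes Svensson (specifically, that $K,\partial_{x_1}K,\ldots,\partial_{x_1}^{h}K$ do not all vanish) and then applies Svensson's sublevel lemma in the $x_1$-direction. So the only substantive difference is that you attempt to \emph{supply} a proof of Svensson's nondegeneracy statement, via the Schur complement and a Leibniz expansion for $\det M$.

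That last step, as written, is where the argument is incomplete. Your input is: $M$ is smooth PSD, $M(y_0)=0$, and for a basis $\{v_i\}$ of $N_0$ each diagonal entry $M_{ii}$ vanishes to order at most $\omega-2$ \emph{along the single line} $y_0+tv_i$. You then want to conclude that $\det M$ has a nonzero Taylor coefficient of total order at most $(\dim N_0)(\omega-2)$, arguing that PSD (via $|M_{ij}|^2\le M_{ii}M_{jj}$ and $\det M\ge 0$) prevents cancellation. But PSD together with this one-directional diagonal information is not enough. For instance, in the $2\times2$ case take
\[
M(y_1,y_2)=\begin{pmatrix} y_1^{2}+y_2^{4} & y_1y_2 \\ y_1y_2 & y_2^{2}+y_1^{4}\end{pmatrix},
\]
which is smooth and PSD, with $M_{11}(t e_1)=t^{2}$ and $M_{22}(s e_2)=s^{2}$ (so $m_1=m_2=2$), yet $\det M=y_1^{6}+y_2^{6}+y_1^{4}y_2^{4}$ vanishes to order $6>m_1+m_2$. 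This $M$ is not a Hessian, so it does not contradict the lemma; but your Leibniz/Cauchy--Schwarz argument never uses the integrability constraint $\partial_k M_{ij}=\partial_i M_{kj}$, and hence cannot distinguish this example from a genuine Hessian. In short, the cancellation mechanism you try to rule out with PSD alone really can occur; one needs an additional ingredient---either the Hessian structure exploited more carefully, or the argument Svensson actually gives---to force a nonzero derivative of $g$ of order at most $(d-1)(\omega-2)$.
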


\begin{proof}
By using a compactness argument and local coordinates we may only
regard $K$ as a function of $x'$ in a neighborhood $B(0, C_0)$ of
$0$ in $\mathbb{R}^{d-1}$ for some constant $C_0$. We may assume
that $K$, $\partial K/\partial x_1$, \ldots, $\partial^h K/\partial
x_1^h$ (with $h=(d-1)(\omega-2)$) do not vanish simultaneously (see
Svensson~\cite[p.~19]{svensson}). We then apply Svensson's
\cite[Lemma 3.3]{svensson} to $K$ in $x_1$-direction, which yields
\begin{equation*}
|\{x_1 : |x_1|\leqslant C_0, K(x')<\delta\}|\leqslant
C_{\mathcal{B}} \delta^{1/h},
\end{equation*}
and the trivial estimate in $x_2$, \ldots, $x_{d-1}$-directions.
Thus the desired estimate follows.
\end{proof}

%%%%%%%%%%%%%%%%%%%%%%%%%%%%%%%%%%%%%%%%%%%%%%%%%%%%%%%%%%%%%%%%%%%%%%%%%%%%%%%%%%%%%%%%%%%%%
\section{Nonvanishing $d\times d$ Determinants}\label{non-vanishing}

In this section we always assume that $\mathcal{B}\subset
\mathbb{R}^d$  ($d\geqslant 2$) is a compact convex domain and its
boundary is a smooth hypersurface (curve if $d=2$).

The support function of $\mathcal{B}$ is given by $H(\xi)=\sup_{y\in
\mathcal{B}}\langle \xi, y\rangle$ for any nonzero $\xi\in
\mathbb{R}^d$. In particular $H(\xi)=\langle \xi, x(\xi)\rangle$ for
any nonzero $\xi$ with $\xi/|\xi|\in \vec{n}((\partial
\mathcal{B})_+)$. It is positively homogeneous of degree one, {\it
i.e.} $H(\lambda \xi)=\lambda H(\xi)$ if $\lambda>0$.

The next two lemmas can be easily proved by using local coordinates,
hence we omit the proof.

\begin{lemma} $H$ is smooth at every $\xi\in \vec{n}((\partial
\mathcal{B})_+)$ and satisfies
\begin{equation*}
D^{\nu}H(\xi) \lesssim 1 \quad \textrm{for $0\leqslant
|\nu|\leqslant 1$}
\end{equation*}
and
\begin{equation*}
D^{\nu}H(\xi) \lesssim (K_\xi)^{3-2|\nu |} \quad \textrm{for } |\nu
|\geqslant 2,
\end{equation*}
where implicit constants may depend only on $|\nu|$ and
$\mathcal{B}$.
\end{lemma}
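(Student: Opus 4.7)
The plan is to argue in local graph coordinates, as the paper suggests. For $|\nu|=0,1$ the bound is immediate: compactness gives $H(\xi)\leqslant \operatorname{diam}(\mathcal{B})\,|\xi|$, and at points of smoothness the standard identity $\nabla_\xi H(\xi)=x(\xi)$ yields $|\nabla H(\xi)|\leqslant \max_{y\in\partial\mathcal{B}}|y|<\infty$. For the interesting case $|\nu|\geqslant 2$, positive homogeneity reduces matters to $|\xi|=1$.

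Fix $\xi_0\in \vec{n}((\partial\mathcal{B})_+)$ with $|\xi_0|=1$ and rotate so that $\xi_0=e_d$. Near $x_0=x(\xi_0)$ parametrize $\partial\mathcal{B}$ as a graph $x_d=\phi(x')$, with $\phi$ smooth, $\nabla\phi(0)=0$, and $D^2\phi(0)$ positive semidefinite; the $C^k$-norms of $\phi$ are bounded by constants depending only on $\mathcal{B}$ and $k$. The outward normal at $(x',\phi(x'))$ is proportional to $(-\nabla\phi(x'),1)$, so for $\xi$ in a small (uniform) neighborhood of $\xi_0$ the point $x'(\xi)$ is characterized by $\nabla\phi(x'(\xi))=-\xi'/\xi_d$. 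The implicit function theorem is applicable because $D^2\phi$ is nondegenerate at $\xi$ (by Lemma \ref{curv-in-ball} the curvature stays comparable to $K_{\xi_0}>0$), and it gives
\begin{equation*}
\partial_{\xi_k} x'_i(\xi)=-(D^2\phi(x'(\xi)))^{-1}_{ik}/\xi_d.
\end{equation*}
Combining this with $\nabla_\xi H(\xi)=x(\xi)$ (a short computation using the normal identity to kill the cross terms) yields
\begin{equation*}
\partial_{\xi_i}\partial_{\xi_k} H(\xi)=-(D^2\phi(x'(\xi)))^{-1}_{ik}/\xi_d \qquad(i,k<d),
\end{equation*}
with analogous formulas involving the $d$-th slot.

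The crucial quantitative step is the estimate $\|(D^2\phi(x'(\xi)))^{-1}\|\lesssim K_\xi^{-1}$. Let $\lambda_1\leqslant\cdots\leqslant\lambda_{d-1}$ be the eigenvalues of $D^2\phi(x'(\xi))$. They are each bounded above by a constant depending only on $\mathcal{B}$ (from the uniform $C^2$ bound on $\phi$), while their product is comparable to $K_\xi$ since $\det D^2\phi(x'(\xi))=K_\xi\cdot(1+|\nabla\phi(x'(\xi))|^2)^{(d+1)/2}$ and $|\nabla\phi|$ is bounded. Hence $\lambda_1\gtrsim K_\xi/\prod_{i\geqslant 2}\lambda_i\gtrsim K_\xi$, giving $\|(D^2\phi)^{-1}\|=1/\lambda_1\lesssim K_\xi^{-1}$ and therefore $|D^2H(\xi)|\lesssim K_\xi^{-1}=K_\xi^{3-2\cdot 2}$.

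For $|\nu|\geqslant 3$ I will induct on $|\nu|$. Each additional $\xi$-derivative acts either on some factor $(D^2\phi(x'(\xi)))^{-1}$, where the matrix formula $\partial M^{-1}=-M^{-1}(\partial M)M^{-1}$ combined with $\partial_\xi x'\sim (D^2\phi)^{-1}$ contributes an additional $(D^2\phi)^{-1}\cdot(D^2\phi)^{-1}\sim K_\xi^{-2}$, or on an already-present $x'(\xi)$ factor, again bringing in one $(D^2\phi)^{-1}$ and hence a further $K_\xi^{-1}$ which the enumeration shows pairs up; the bounded higher derivatives of $\phi$ do not contribute negative powers of $K_\xi$. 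Thus each extra derivative costs exactly $K_\xi^{-2}$, proving $|D^\nu H(\xi)|\lesssim K_\xi^{3-2|\nu|}$ with constants depending only on $|\nu|$ and $\mathcal{B}$. The main care point is the eigenvalue argument for $\|(D^2\phi)^{-1}\|\lesssim K_\xi^{-1}$, since the Gaussian curvature controls only a determinant; the rest is bookkeeping with the chain rule and the matrix-inverse derivative formula.
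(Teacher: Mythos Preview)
Your argument is correct and is exactly the local-coordinates computation the paper has in mind (the paper omits the proof, saying only that it ``can be easily proved by using local coordinates''). The one cosmetic slip is the sign of $D^2\phi(0)$---with outward normal $e_d$ and the body on the side $x_d\leqslant\phi(x')$, the Hessian is negative definite---but this does not affect the key estimate $\|(D^2\phi)^{-1}\|\lesssim K_\xi^{-1}$ or the induction that each additional $\xi$-derivative introduces at most two further factors of $(D^2\phi)^{-1}$.
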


\begin{remark}\label{remark1}
For $\theta\in SO(d)$, we will denote the support function of
$\mathcal{B}_{\theta}$ by $H_{\theta}(\xi)=\sup_{y\in
\mathcal{B}_{\theta}}\langle \xi, y\rangle$. Since
$H_{\theta}(\xi)=H(\theta^{t}\xi)$, we can easily get bounds for
$H_{\theta}$ in the same form as in the above lemma (with
$\vec{n}((\partial \mathcal{B})_+)$ and $K_\xi$ replaced by
$\theta\vec{n}((\partial \mathcal{B})_+)$ and $K_\xi^{\theta}$
respectively).
\end{remark}

\begin{lemma}\label{lemma4:2}
Assume that $\xi\neq 0$ with $\xi/|\xi|\in \vec{n}((\partial
\mathcal{B})_+)$. If $d\geqslant 3$ the eigenvalues of the matrix
$\nabla^2_{\xi \xi}H(\xi)$ are $0$, $(|\xi|\kappa_1)^{-1}$, \ldots,
$(|\xi|\kappa_{d-1})^{-1}$, where $\{\kappa_j\}_{j=1}^{d-1}$ are the
principle curvatures of $\partial \mathcal{B}$ at $x(\xi)$; if $d=2$
the eigenvalues are $0$ and $(|\xi|K_{\xi})^{-1}$.
\end{lemma}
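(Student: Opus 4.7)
The plan is to compute $\nabla^2 H(\xi)$ directly from two ingredients: Euler's homogeneity relation, which will supply the zero eigenvalue, and the inverse Gauss map, which will supply the remaining eigenvalues.

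First I would record that on the open cone where $\xi/|\xi|\in\vec{n}((\partial \mathcal{B})_+)$ the supremum in the definition of $H$ is attained uniquely at the smooth map $x(\xi)=\vec{n}^{-1}(\xi/|\xi|)$, and a standard envelope argument (differentiating $H(\xi)=\langle\xi,x(\xi)\rangle$ and using that $x(\xi)$ lies on $\partial\mathcal{B}$ with exterior normal $\xi/|\xi|$) gives $\nabla H(\xi)=x(\xi)$. Thus $\nabla^2 H(\xi)$ is the Jacobian of the map $\xi\mapsto x(\xi)$.

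Next, since $H$ is positively homogeneous of degree one, $\nabla H$ is positively homogeneous of degree zero and $\nabla^2 H$ of degree $-1$; in particular differentiating Euler's identity $\xi\cdot\nabla H(\xi)=H(\xi)$ once more yields $\nabla^2 H(\xi)\xi=0$. So $\xi$ is an eigenvector with eigenvalue $0$, accounting for the zero in the statement and giving the correct $|\xi|^{-1}$ scaling for the remaining eigenvalues.

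It then remains to compute the restriction of $\nabla^2 H(\xi)$ to the orthogonal complement $\xi^\perp$. Set $r=|\xi|$, $\omega=\xi/r$, and take $v\perp\omega$. A direct chain-rule computation gives $\tfrac{d}{dt}\big|_{t=0}(\xi+tv)/|\xi+tv|=v/r$, so
\begin{equation*}
\nabla^2 H(\xi)\,v \;=\; \frac{d}{dt}\bigg|_{t=0} x(\xi+tv) \;=\; \frac{1}{r}\,d(\vec{n}^{-1})_{\omega}(v).
\end{equation*}
The differential of the Gauss map $\vec{n}:\partial\mathcal{B}\to S^{d-1}$ at $x(\xi)$ is the shape (Weingarten) operator, whose eigenvalues are the principal curvatures $\kappa_1,\ldots,\kappa_{d-1}$ (all positive at $x(\xi)\in(\partial\mathcal{B})_+$), so its inverse has eigenvalues $\kappa_j^{-1}$. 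Hence the restriction of $\nabla^2 H(\xi)$ to $\xi^\perp$ has eigenvalues $(|\xi|\kappa_j)^{-1}$, which combined with the zero eigenvalue from the previous step gives the full spectrum claimed. The planar case $d=2$ is identical with the single principal curvature $\kappa_1=K_\xi$.

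The only mildly delicate point is the identification of $d(\vec{n}^{-1})_\omega$ as a symmetric operator on $\xi^\perp\cong T_\omega S^{d-1}$ (so that its spectrum really equals the set of reciprocals of the principal curvatures, not just something conjugate to it); this is handled by working in an orthonormal frame of principal directions at $x(\xi)$ and transporting it via $\vec{n}$ to $T_\omega S^{d-1}$, where the computation above becomes a diagonal matrix product. Everything else is routine.
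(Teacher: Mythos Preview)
Your argument is correct. The paper itself omits the proof entirely, noting only that it ``can be easily proved by using local coordinates,'' so there is nothing substantive to compare against; your coordinate-free approach via $\nabla H(\xi)=x(\xi)$, Euler's relation, and the inverse Weingarten map is the standard way to establish this fact and would constitute a complete proof where the paper gives none.
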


Given $d$ vectors $v_1, \ldots, v_d\in \mathbb{R}^d$, by writing
$V=(v_1, \ldots, v_d)$ we mean $V$ is the matrix in
$\mathbb{R}^{d\times d}$ with column vectors $v_1, \ldots, v_d$. If
$y\neq 0$ we define $F_{\theta}(u_1, \ldots,
u_d)=H_{\theta}(y+\sum_{l=1}^{d}u_l v_l)$, $u_l\in \mathbb{R}$
$(l=l, \ldots, d)$. For $q\in \mathbb{N}$ let
\begin{equation*}
h_q^{\theta}(y, v_1, \ldots, v_d)=\det\left(g_{i, j}^{\theta}(y,
v_1, \ldots, v_d) \right)_{1\leqslant i, j\leqslant d},
\end{equation*}
where
\begin{equation*}
g_{i, j}^{\theta}(y, v_1, \ldots,
v_d)=\frac{\partial^{q+2}F_{\theta}}{\partial u_1
\partial u_i \partial u_j \partial u_d^{q-1}}(0).
\end{equation*}

The following lemma is a higher dimensional analogue of Guo's
\cite[Lemma 3.4]{guo2}, which enables us to apply the method of
stationary phase later in the estimate of certain exponential sums.
The result is for $\xi\in \vec{n}((\partial
\mathcal{B}_{\theta})_+)$, but can be easily extended to $\xi\neq 0$
with $\xi/|\xi|\in \vec{n}((\partial \mathcal{B}_{\theta})_+)$ by
using the homogeneity of $H_{\theta}$. We will follow M\"{u}ller's
method used to prove his \cite[Lemma 3]{mullerII}.

\begin{lemma} \label{non-vanishinglemma}
If $d\geqslant 3$, for every $\xi\in \vec{n}((\partial
\mathcal{B}_{\theta})_+)$ there exist $d$ linearly independent
vectors $v_l=v_l^{\theta}(\xi)\in \mathbb{Z}^d$ ($l=1, \ldots, d$)
such that
\begin{equation}
\begin{split}
&|v_1|\asymp (K_{\xi}^{\theta})^{-d-2q-8+1/(d-1)},\\
&|v_l|\asymp
(K_{\xi}^{\theta})^{-d-2q-5+1/(d-1)} \quad (l=2, \ldots, d), \\
&|\det (V)|\asymp (K_{\xi}^{\theta})^{d(-d-2q-5+1/(d-1))}, \\
&\|V^{-1}\|\lesssim (K_{\xi}^{\theta})^{d+2q+2-1/(d-1)},
\end{split}\label{lemma4:4-1}
\end{equation}
where $V=(v_1,\ldots, v_d)$. Furthermore there exists a constant
$c_2>0$ (depending only on $q$ and $\mathcal{B}$) such that, for
$\eta \in B(\xi, c_2(K_\xi^{\theta})^{d+2q+7-1/(d-1)})$,
\begin{equation}
|h_q^{\theta}(\eta, v_1,\ldots, v_d)|\gtrsim
(K_\xi^{\theta})^{(-d-2q-5+1/(d-1))d(q+2)-3d+5-1/(d-1)}. \label{bbb}
\end{equation}

The constants implicit in \eqref{lemma4:4-1} and \eqref{bbb} depend
only on $q$ and $\mathcal{B}$.
\end{lemma}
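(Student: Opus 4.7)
I would follow M\"uller's method from \cite[Lemma~3]{mullerII}, adapted along the lines of the planar Lemma~3.4 of \cite{guo2} to allow vanishing Gaussian curvature. By relabelling $\mathcal{B}_{\theta}$ as $\mathcal{B}$ I may drop the superscripts in the proof; by positive homogeneity of $H$ I may also take $|\xi|\asymp 1$ and write $K=K_{\xi}$. Using Lemma~\ref{lemma4:2}, fix an orthonormal basis $e_1,\ldots,e_{d-1}$ of $\xi^{\perp}$ that diagonalises $\nabla^{2}H(\xi)$, with eigenvalues $(|\xi|\kappa_j)^{-1}$; Euler's relation $\nabla^{2}H(\xi)\,\xi=0$ means $e_d:=\xi/|\xi|$ spans the kernel. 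All size estimates below will be expressed in this eigenbasis and then transferred to $\mathbb{Z}^{d}$ by Dirichlet approximation.

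\textbf{Construction of the integer vectors.} The vectors $v_l\in\mathbb{Z}^{d}$ come from simultaneous Dirichlet approximation, which supplies, for a prescribed real direction and scale $N$, an integer vector of length $\asymp N$ whose transverse component is $\lesssim N^{(d-2)/(d-1)}$; this is where the ubiquitous exponent $1/(d-1)$ originates. I take $v_2,\ldots,v_d$ at scale $N\asymp K^{-d-2q-5+1/(d-1)}$ along directions close to $e_2,\ldots,e_{d-1},e_d$, and $v_1$ at scale $N_1\asymp K^{-d-2q-8+1/(d-1)}$ along a direction close to $e_1$, with the Dirichlet errors arranged so that $v_1$ lies within $O(N)$ of $\mathrm{span}(v_2,\ldots,v_d)$. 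A cofactor computation then yields $|\det V|\asymp N^{d}$ (the out-of-plane component of $v_1$, of size $\asymp N$, drives the determinant) and $\|V^{-1}\|\lesssim K^{d+2q+2-1/(d-1)}$, an improvement over the naive $N_1^{-1}$ by the factor $N/N_1=K^{3}$.

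\textbf{Nonvanishing of $h_q$.} Applying the chain rule to $F(u)=H(y+Vu)$ gives
\[
g_{i,j}(y,v_1,\ldots,v_d)=D^{q+2}H(y)\bigl[v_1,\,v_i,\,v_j,\,\underbrace{v_d,\ldots,v_d}_{q-1}\bigr],
\]
which, as a bilinear form in $(v_i,v_j)$, I denote $v_i^{\mathsf{T}}\hat{M}(y)v_j$; hence $h_q=\det(V)^{2}\det\hat{M}(y)$. To lower-bound $\det\hat{M}(\xi)$ I contract the $(q+2)$-tensor $D^{q+2}H(\xi)$ against $v_1$ and $q-1$ copies of $v_d$, use Euler's relation to eliminate the radial-only contributions, and observe that the surviving part is, to leading order, a scalar (depending on $|v_1|$ and $|v_d|^{q-1}$) times $\nabla^{2}H(\xi)$ restricted to a nondegenerate subspace, whose determinant is controlled by $\prod_{j}(|\xi|\kappa_j)^{-1}$ together with compactness of $\mathcal{B}$. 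Combining with $|\det V|^{2}\asymp N^{2d}$ and the $K$-powers inherited from $v_1,v_d$ produces the asserted lower bound on $|h_q(\xi,v_1,\ldots,v_d)|$.

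\textbf{Stability and main obstacle.} For $\eta\in B(\xi,c_{2}K^{d+2q+7-1/(d-1)})$, the Taylor remainder $|D^{q+3}H|\cdot|\eta-\xi|\cdot|v_1|\cdot|v_d|^{q-1}$ is controlled by the derivative bounds $|D^{\nu}H|\lesssim K^{3-2|\nu|}$ from the lemma preceding Remark~\ref{remark1} (valid on $B(\xi,cK^{2})$ via Lemma~\ref{curv-in-ball}); matching this against the main term at $\xi$ fixes exactly the ball radius $K^{d+2q+7-1/(d-1)}$. The principal difficulty is the exponent arithmetic, lining up the Dirichlet exponent $1/(d-1)$ simultaneously with the $K$-powers in $|v_l|$, $|\det V|$, $\|V^{-1}\|$, and $|h_q|$, together with verifying that the principal part of $\hat{M}(\xi)$ remains nondegenerate after all contractions, so that no accidental cancellation (especially in the radial directions governed by Euler's identity) degrades the lower bound.
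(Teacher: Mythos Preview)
Your overall framework---diagonalise $\nabla^{2}H(\xi)$ in an eigenbasis, build integer vectors by approximation, and use the factorisation $h_q=(\det V)^{2}\det\hat M$---is compatible with the paper, but two of your key claims are wrong and the actual heart of the argument is missing.

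First, the exponent $1/(d-1)$ does \emph{not} come from Dirichlet approximation. It enters because the largest eigenvalue $\beta_{1}$ of $\nabla^{2}H(\xi)$ satisfies $\beta_{1}\geqslant(\prod_{j}\beta_{j})^{1/(d-1)}=K^{-1/(d-1)}$, and this factor $\beta_{1}$ survives in the determinant computation below. The paper obtains the integer vectors by simply rounding $Nv_{l}^{*}$ to the nearest lattice point, with $N\asymp K^{-d-2q-5+1/(d-1)}$ chosen to make the rounding error small enough for the stability step; no Dirichlet is used, and the $1/(d-1)$ in $N$ is just inherited from the target lower bound.

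Second, you have missed the mechanism that forces the determinant to be nonvanishing. The paper's real vectors are $v_{l}^{*}=Pw_{l}$ with $w_{d}=e_{1}$ radial, $w_{l}=w_{l}'$ tangential eigenvectors for $2\leqslant l\leqslant d-1$, and crucially $w_{1}=w_{1}'+\alpha e_{1}$ with $\alpha=C_{q,\mathcal{B}}K^{-3}$. This large \emph{radial} shift in $v_{1}^{*}$---not a choice of tangential direction at a different Dirichlet scale---is what produces the extra factor $K^{-3}$ in $|v_{1}|$, keeps $|\det V^{*}|=1$ (the $\alpha$-part is parallel to $w_{d}$), and drives the lower bound: one computes via Euler's relations that $b_{i,j}(\alpha)=b_{i,j}(0)+\alpha(-1)^{q}q!\beta_{j}\delta_{ij}$ for $2\leqslant i,j\leqslant d-1$ while $b_{i,d}(\alpha)=(-1)^{q}q!\beta_{1}\delta_{1i}$, so expanding along the last row and column reduces $h_{q}$ to $(q!\beta_{1})^{2}$ times a $(d-2)\times(d-2)$ determinant whose diagonal entries $\asymp\alpha\beta_{j}$ dominate the off-diagonal $b_{i,j}(0)\lesssim K^{-3}$ once $C_{q,\mathcal{B}}$ is large, giving $|h_{q}|\gtrsim\beta_{1}K^{-3d+5}\geqslant K^{-3d+5-1/(d-1)}$. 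Your description (``a scalar times $\nabla^{2}H(\xi)$ restricted to a nondegenerate subspace'') does not capture this: $\nabla^{2}H(\xi)$ has the radial kernel, and without the $\alpha$-shift there is no reason the third-derivative contributions $b_{i,j}(0)$ should not cancel. The final bound on $|h_{q}(\eta,v_{1},\ldots,v_{d})|$ then comes from the homogeneity identity $h_{q}(\eta,Nv_{1}^{**},\ldots,Nv_{d}^{**})=N^{d(q+2)}h_{q}(\eta,v_{1}^{**},\ldots,v_{d}^{**})$ together with a mean-value stability estimate.
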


\begin{proof}
Let $\xi\in \vec{n}((\partial \mathcal{B}_{\theta})_+)$ be
arbitrarily fixed.

{\bf Step 1.} Let $p_1=\xi$. We first choose $d-1$ vectors $p_2,
\ldots, p_{d}\in S^{d-1}$ such that $P=(p_1, \ldots, p_d)\in
\mathbb{R}^{d\times d}$ is an orthogonal matrix. Let
$\widetilde{H}_{\theta}(y)=H_{\theta}(Py)$. Then
$\widetilde{H}_{\theta}$ is positively homogeneous of degree one and
smooth at $e_1$. Since the matrix $\nabla^2
\widetilde{H}_{\theta}(e_1)$ is similar to $\nabla^2
H_{\theta}(\xi)$ it follows from Lemma \ref{lemma4:2} that the
eigenvalues of $\nabla^2 \widetilde{H}_{\theta}(e_1)$ are $0$,
$\beta_1, \ldots, \beta_{d-1}$, where $\{\beta_j^{-1}\}_{j=1}^{d-1}$
are the principle curvatures of $\partial \mathcal{B}_{\theta}$ at
$x^{\theta}(\xi)$. Without loss of generality we assume
$\beta_1=\max_{1\leqslant j\leqslant d-1} \beta_j$, therefore
$\beta_1\geqslant (K_\xi^{\theta})^{-1/(d-1)}$.

Set $A=\nabla^2 \widetilde{H}_{\theta}(e_1)$. $A$ is a symmetric
matrix of rank $d-1$ with vanishing first row and column (due to the
homogeneity of $\widetilde{H}_{\theta}$; see the proof of
M\"{u}ller's \cite[Lemma 3]{mullerII}). Choose a system of
orthonormal eigenvectors $w_1', \ldots, w_{d-1}'$ of $A$, whose
first components vanish, such that the eigenvalue of $w_j'$ is
$\beta_j$. For $\alpha>1$ denote
\begin{equation*}
w_l=\left\{
\begin{array}{ll}
w_1'+\alpha e_1   &\quad \textrm{if $l=1$}, \\
w_l'              &\quad \textrm{if $2\leqslant l\leqslant d-1$},\\
e_1              &\quad \textrm{if $l=d$}.
\end{array} \right.
\end{equation*}
Then $A w_l=\beta_l w_l'$ ($l=1, \ldots, d-1$) and $w_1$ is
orthogonal to $w_l'$ ($l=2, \ldots, d-1$). We also have $|w_1|\asymp
\alpha$, $|w_l|=1$ ($l=2,\ldots, d$), and $|\det(W)|=1$ where
$W=(w_1, \ldots, w_d)$. Let $v_l^*=Pw_l$. Then $|v_1^*|\asymp
\alpha$, $|v_l^*|=1$ ($l=2, \ldots, d$), and $|\det(V^*)|=1$ where
$V^*=(v_1^*, \ldots, v_d^*)$. We claim that if $\alpha=C_{q,
\mathcal{B}} (K_{\xi}^{\theta})^{-3}$ with a sufficiently large
$C_{q, \mathcal{B}}$ then
\begin{equation}
|h_q^{\theta}(\xi, v_1^*,\ldots, v_d^*)|\gtrsim
(K_\xi^{\theta})^{-3d+5-1/(d-1)} \label{step1}
\end{equation}
with $F_{\theta}(u_1, \ldots, u_d)=H_{\theta}(\xi+\sum_{l=1}^{d}u_l
v_l^*)$.

This claim can be proved by a straightforward computation (given
below). Note that $F_{\theta}(u_1, \ldots,
u_d)=\widetilde{H}_{\theta}(e_1+\sum_{l=1}^{d}u_l w_l)$ and we will
use this formula to compute $g_{i, j}^{\theta}(\xi, v_1^{*}, \ldots,
v_d^{*})=:b_{i,j}^{\theta}(\alpha)$.  If $1\leqslant i, j\leqslant
d-1$,
\begin{equation}
b_{i,j}^{\theta}(0)=(\nabla\cdot w_1')(\nabla\cdot w_i')(\nabla\cdot
w_j') \partial_{y_1}^{q-1}\widetilde{H}_{\theta}(e_1)\lesssim
(K_\xi^{\theta})^{-3}. \label{bound_bij0}
\end{equation}
The last inequality is due to the homogeneity of
$\widetilde{H}_{\theta}$ (see the proof of M\"{u}ller's \cite[Lemma
3]{mullerII}) and Remark \ref{remark1}.

If $i=1$, $1\leqslant j\leqslant d-1$, then
\begin{equation}
b_{1,j}^{\theta}(\alpha)=b_{1,j}^{\theta}(0)+3\alpha (-1)^q
q!\beta_1 \delta_{1j},\label{b1j}
\end{equation}
where $\delta_{ij}$ is the Kronecker notation.

If $2\leqslant i, j\leqslant d-1$, then
\begin{equation}
b_{i,j}^{\theta}(\alpha)=b_{i,j}^{\theta}(0)+\alpha (-1)^q q!\beta_j
\delta_{ij}.\label{bij}
\end{equation}

If $1\leqslant i\leqslant d$, $j=d$, then
\begin{equation}
b_{i,d}^{\theta}(\alpha)=(-1)^q q!\beta_1 \delta_{1i}.\label{bid}
\end{equation}

Using formulas \eqref{bij} and \eqref{bid}, we get
\begin{equation*}
\begin{split}
|h_q^{\theta}(\xi, v_1^*,\ldots, v_d^*)|&=(q!\beta_1)^2
|\det(b_{i,j}^{\theta}(\alpha))_{2\leqslant i, j\leqslant d-1}|\\
        &=(q!\beta_1)^2
|\det(b_{i,j}^{\theta}(0)+\alpha (-1)^q q!\beta_j
\delta_{ij})_{2\leqslant i,
j\leqslant d-1}|\\
&=\beta_1(K_\xi^{\theta})^{-3d+5}|q!^d C_{q,
\mathcal{B}}^{d-2}+O(C_{q, \mathcal{B}}^{d-3})|
\end{split}
\end{equation*}
where we have used \eqref{bound_bij0}, $\beta_j\gtrsim 1$, and
$\prod \beta_j=(K_\xi^{\theta})^{-1}$ to get the last equality.
Since $\beta_1\geqslant (K_\xi^{\theta})^{-1/(d-1)}$, we get
\eqref{step1} if $C_{q, \mathcal{B}}$ is sufficiently large.

{\bf Step 2.} For any $N\in \mathbb{N}$, there exist $v_l\in
\mathbb{Z}^d$ ($l=1, \ldots, d$) such that
$|v_l^{**}-v_l^{*}|\leqslant \sqrt{d}/N$ where $v_{l}^{**}=v_l/N$.
If $N\geqslant C(K_\xi^{\theta})^{-3}$ then $|v_1^{**}|\asymp
(K_\xi^{\theta})^{-3}$, $|v_l^{**}|\asymp 1$ ($l=2, \ldots, d$), and
$|\det(V^{**})|\asymp 1$ where $V^{**}=(v_1^{**}, \ldots,
v_d^{**})$.

Assume $N$ is the smallest integer not less than
$C'(K_\xi^{\theta})^{-d-2q-5+1/(d-1)}$ with $C'$ chosen below and
$\eta\in B(\xi, c_2 r^{\theta}(\xi))$ with
$r^{\theta}(\xi)=(K_\xi^{\theta})^{d+2q+7-1/(d-1)}$ and
$c_2\leqslant c_1$, where $c_1$ is the constant appearing in Lemma
\ref{curv-in-ball}. By the mean value theorem, Lemma
\ref{curv-in-ball}, and Remark \ref{remark1}, we get
\begin{align*}
&|g_{i, j}^{\theta}(\xi, v_1^*,\ldots, v_d^*)-g_{i,
j}^{\theta}(\eta,
v_1^{**}, \ldots,  v_d^{**})|\\
&\quad \lesssim \left\{ \begin{array}{lll}
(K_\xi^{\theta})^{-2q-10}(N^{-1}+c_2(K_\xi^{\theta})^{-2}r^{\theta}(\xi))
& \textrm{if
$i=j=1$},\\
(K_\xi^{\theta})^{-2q-7}(N^{-1}+c_2(K_\xi^{\theta})^{-2}r^{\theta}(\xi)) & \textrm{if $i=1, j\geqslant 2$}, \\
(K_\xi^{\theta})^{-2q-4}(N^{-1}+c_2(K_\xi^{\theta})^{-2}r^{\theta}(\xi))
 & \textrm{if $i\geqslant 2, j \geqslant 2$}.
\end{array}\right.
\end{align*}
These estimates, together with the bounds of $g_{i, j}^{\theta}(\xi,
v_1^*,\ldots, v_d^*)$'s (given by \eqref{bound_bij0}, \eqref{b1j},
\eqref{bij}, and \eqref{bid}), lead to
\begin{equation*}
|h_q^{\theta}(\xi, v_1^*,\ldots, v_d^*)-h_q^{\theta}(\eta, v_1^{**},
\ldots,  v_d^{**})|\lesssim
(K_\xi^{\theta})^{-4d-2q}(N^{-1}+c_2(K_\xi^{\theta})^{-2}r^{\theta}(\xi)).
\end{equation*}

If $C'$ is sufficiently large and $c_2$ is sufficiently small, it
then follows from \eqref{step1} that
\begin{equation*}
|h_q^{\theta}(\eta, v_1^{**}, \ldots,  v_d^{**})|\gtrsim
(K_\xi^{\theta})^{-3d+5-1/(d-1)}.
\end{equation*}

The desired estimates now follow from the following two equalities:
\begin{equation*}
|h_q^{\theta}(\eta, v_1, \ldots, v_d)|=N^{d(q+2)}|h_q^{\theta}(\eta,
v_1^{**}, \ldots,  v_d^{**})|
\end{equation*}
and
\begin{equation*}
V^{-1}=N^{-1}(\textrm{adjugate matrix of $V^{**}$})/\det(V^{**}).
\end{equation*}
\end{proof}

For $d=2$ case Guo's \cite[Lemma 3.4]{guo2} gives a similar result
but in a nicer form. That lemma can be proved by using the same
method. In particular, the bound $g_{11}(\xi, v_1, v_2)\lesssim
(K_{\xi}^{\theta})^{-2q-1}$ is used in its proof, but later we find
a better bound of $g_{11}$, namely $g_{11}(\xi, v_1, v_2)\lesssim
(K_{\xi}^{\theta})^{-3}$ (just like the bound \eqref{bound_bij0} in
the above proof). By using the latter bound without modifying too
much of the proof of Guo's \cite[Lemma 3.4]{guo2}, we are able to
prove the following improved result, which eventually leads to our
estimates in Theorem \ref{R2-general} and \ref{R2-finitetype}.

\begin{lemma}\label{r2-keylemma} If $d=2$, for every $\xi\in
\vec{n}((\partial \mathcal{B}_{\theta})_+)$ there exist two
orthogonal vectors $v_i=v_i(\xi)\in \mathbb{Z}^2$ ($i=1, 2$) such
that
\begin{equation}
|v_1|=|v_2|\asymp (K_{\xi}^{\theta})^{-2q-2}\quad \textrm{and}\quad
\|V^{-1}\|\lesssim (K_{\xi}^{\theta})^{2q+2}, \label{s9-lemma4:4-1}
\end{equation}
where $V=(v_1, v_2)$. Furthermore there exists a constant $c_2>0$
(depending only on $q$ and $\mathcal{B}$) such that, for $\eta \in
B(\xi, c_2(K_\xi^{\theta})^{2q+4})$,
\begin{equation}
|h_q^{\theta}(\eta, v_1, v_2)|\gtrsim
(K_\xi^{\theta})^{-4q^2-12q-10}. \label{s9-bbb}
\end{equation}

The constants implicit in \eqref{s9-lemma4:4-1} and \eqref{s9-bbb}
depend only on $q$ and $\mathcal{B}$.
\end{lemma}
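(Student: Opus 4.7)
My plan is to adapt the proof of Lemma \ref{non-vanishinglemma} to $d=2$, making two modifications: (i) exploit the sharper estimate $g_{11}(\xi,v_1,v_2)\lesssim(K_\xi^\theta)^{-3}$ (the $d=2$ analogue of \eqref{bound_bij0}, forced by the homogeneity of the support function) in place of the weaker derivative bound used in the proof of \cite[Lemma 3.4]{guo2}; and (ii) upgrade the construction so that $v_1$ and $v_2$ are exactly orthogonal, which is free in $\mathbb{Z}^2$ because the $\pi/2$-rotation $R$ preserves the lattice. I would set $p_1=\xi$, pick $p_2\in S^1$ so that $P=(p_1,p_2)\in O(2)$, and define $\widetilde{H}_\theta(y)=H_\theta(Py)$. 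Lemma \ref{lemma4:2} together with the vanishing of the first row and column of $\nabla^2\widetilde{H}_\theta(e_1)$ (a consequence of the homogeneity of $\widetilde{H}_\theta$) gives $\nabla^2\widetilde{H}_\theta(e_1)=\operatorname{diag}(0,(K_\xi^\theta)^{-1})$.

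Next, pick $\alpha=C_{q,\mathcal{B}}(K_\xi^\theta)^{-3}$ with a sufficiently large constant and set $w_1=\alpha e_1+e_2$ and $w_2=-e_1+\alpha e_2$; these are orthogonal with equal length $\sqrt{\alpha^2+1}\asymp\alpha$. Put $v_l^*=Pw_l$ so that $v_1^*\perp v_2^*$ and $|v_l^*|\asymp\alpha$. A direct computation of the entries $g_{i,j}^\theta(\xi,v_1^*,v_2^*)$, mirroring formulas \eqref{bound_bij0}, \eqref{b1j}, \eqref{bij}, \eqref{bid} in the $d=2$ setting and carefully separating the Kronecker-delta terms that are proportional to $\alpha$ from the ``homogeneity'' terms that benefit from the improved bound $\lesssim(K_\xi^\theta)^{-3}$, then yields a lower bound $|h_q^\theta(\xi,v_1^*,v_2^*)|\gtrsim (K_\xi^\theta)^{-\gamma_0}$ for an explicit $\gamma_0$ determined by the target exponent in \eqref{s9-bbb}.

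Finally, I would pass to integer orthogonal pairs and perturb. Let $N$ be the smallest positive integer not less than $C'(K_\xi^\theta)^{1-2q}$, obtain $v_1\in\mathbb{Z}^2$ by componentwise rounding $Nv_1^*$ (so $|v_1-Nv_1^*|\leq\sqrt{2}$), and define $v_2=Rv_1\in\mathbb{Z}^2$. Since $v_2^*=Rv_1^*$, the vector $v_2$ is likewise within $\sqrt{2}$ of $Nv_2^*$, while $v_1\perp v_2$ and $|v_1|=|v_2|\asymp(K_\xi^\theta)^{-2q-2}$; orthogonality with equal column length immediately gives $\|V^{-1}\|=|v_1|^{-1}\lesssim(K_\xi^\theta)^{2q+2}$, which is \eqref{s9-lemma4:4-1}. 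For $\eta\in B(\xi,c_2(K_\xi^\theta)^{2q+4})$ with $c_2\leq c_1$, I would combine Lemma \ref{curv-in-ball}, the mean value theorem, and Remark \ref{remark1} to bound $|g_{i,j}^\theta(\xi,v_1^*,v_2^*)-g_{i,j}^\theta(\eta,v_1^{**},v_2^{**})|$ (with $v_l^{**}=v_l/N$) by a quantity that is negligible against the main term from the previous paragraph when $C'$ is large and $c_2$ is small. This forces $|h_q^\theta(\eta,v_1^{**},v_2^{**})|\gtrsim(K_\xi^\theta)^{-\gamma_0}$, and the multilinearity identity $h_q^\theta(\eta,v_1,v_2)=N^{2(q+2)}h_q^\theta(\eta,v_1^{**},v_2^{**})$ converts this into \eqref{s9-bbb}.

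The main obstacle is the bookkeeping in the second paragraph: in the expansion $h_q=g_{11}g_{22}-g_{12}^2$ one must check that the $\alpha$-amplified contributions and the improved-homogeneity contributions land in different factors of the dominant term, so that no cancellation destroys the improvement. This is what upgrades the exponent in \eqref{s9-bbb} from what the generic derivative bound would yield to the sharper $-4q^2-12q-10$, and is ultimately what feeds the numerical improvements in Theorems \ref{R2-general} and \ref{R2-finitetype}.
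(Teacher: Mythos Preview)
Your overall plan is the paper's plan: the paper does not give a separate proof but says explicitly that Lemma \ref{r2-keylemma} follows by running M\"uller's argument (as in Lemma \ref{non-vanishinglemma} and Guo's \cite[Lemma~3.4]{guo2}) with the single upgrade $g_{11}\lesssim (K_\xi^\theta)^{-3}$ in place of $g_{11}\lesssim (K_\xi^\theta)^{-2q-1}$. Your rotation trick $v_2=Rv_1$ to force an orthogonal integer pair of equal length (and hence $\|V^{-1}\|=|v_1|^{-1}$) is exactly the mechanism that makes the $d=2$ statement ``nicer'' than the $d\geqslant 3$ one, and the multilinearity $h_q^\theta(\eta,Nv_1^{**},Nv_2^{**})=N^{2(q+2)}h_q^\theta(\eta,v_1^{**},v_2^{**})$ together with your choice $N\asymp (K_\xi^\theta)^{1-2q}$ delivers the target exponents.

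There is one inconsistency worth flagging. You take $w_2=-e_1+\alpha e_2$ so that $w_1\perp w_2$ already at Step~1, and then say you will ``mirror formulas \eqref{bound_bij0}, \eqref{b1j}, \eqref{bij}, \eqref{bid}'' and exploit $g_{11}\lesssim (K_\xi^\theta)^{-3}$. But those formulas, and in particular the improved bound on $g_{11}$, hinge on $w_d=e_1$ being the \emph{radial} direction: the factor $\partial_{u_d}^{q-1}=\partial_1^{q-1}$ is then the Euler operator at $e_1$, which is what collapses $b_{i,j}^\theta(0)$ to a third-derivative quantity independent of $q$ and produces the clean last column \eqref{bid}. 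With your $w_2$ the operator $(\nabla\cdot w_2)^{q-1}=(-\partial_1+\alpha\partial_2)^{q-1}$ is not Euler, so neither \eqref{bid} nor the stated $g_{11}$-bound holds as written (already for $q=1$ one computes $g_{11}=-3\alpha\beta+H_{222}\asymp (K_\xi^\theta)^{-4}$, not $(K_\xi^\theta)^{-3}$). The paper's intended route keeps $w_2=e_1$ (equivalently $v_2^*=\xi$) through Step~1 so that the $g_{11}$ improvement is literally \eqref{bound_bij0}, and imposes orthogonality only at the integer stage via $v_2=Rv_1$; the Step~2 perturbation estimate then has to absorb the change from $v_2^*$ to $v_2^{**}=R\,v_1/N$, which is what fixes the size of $N$. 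Your orthogonal-from-the-start variant can also be pushed through (for $q=1$ one finds the pleasant identity $h_1^\theta(\xi,v_1^*,v_2^*)=-\beta^2(\alpha^2+1)^2$ with all $H_{222}$ contributions cancelling), but the justification is a direct expansion of $g_{11}g_{22}-g_{12}^2$, not the ``improved $g_{11}$'' mechanism you invoke.
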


%%%%%%%%%%%%%%%%%%%%%%%%%%%%%%%%%%%%%%%%%%%%%%%%%%%%%%%%%%%%%%%%%%%%%%%%%%%%%%%%%%%%%%%%%%%%%

\section{The Fourier Transform of Certain Indicator Functions}\label{four-tran}

In this section we will establish an asymptotic formula of the
Fourier transform of the indicator function $\chi_{\mathcal{B}}$ for
convex domains $\mathcal{B}$ in $\mathbb{R}^d$, which generalizes
the results in Guo's \cite[Section 4]{guo2}.

\begin{lemma}\label{monotonicity}
Assume $\mathcal{B}\subset \mathbb{R}^d$ ($d\geqslant 2$) is a
compact convex domain and its boundary is a smooth hypersurface
(curve if $d=2$). Then there exist two positive constants $c$ and
$c_3$ (both depending only on $\mathcal{B}$) such that, for any $\xi
\in \vec{n}((\partial \mathcal{B})_+)\cap (-\vec{n}((\partial
\mathcal{B})_+))$ and $r\leqslant c_3$,
\begin{equation}
|\langle \vec{n}(x), \xi\rangle|\leqslant 1-c r^2(\min(K_{\xi},
K_{-\xi}))^4 \label{ineq-mono}
\end{equation}
if $x$ is in $\partial \mathcal{B}\setminus (B(x(\xi), r
K_{\xi})\cup B(x(-\xi), r K_{-\xi}))$.
\end{lemma}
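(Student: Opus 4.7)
The plan is to reduce to showing, without loss of generality, that $\langle \vec{n}(x), \xi\rangle \leqslant 1 - c r^2 K_\xi^4$ whenever $\langle \vec{n}(x), \xi\rangle \geqslant 0$ and $x \notin B(x(\xi), rK_\xi)$; the companion case $\langle \vec{n}(x), \xi\rangle \leqslant 0$ follows by applying the same argument with $\xi$ replaced by $-\xi$, and the stated $\min$ is then free because $K_\xi^4 \geqslant (\min(K_\xi, K_{-\xi}))^4$. The supporting hyperplane argument, together with the bijectivity of $\vec{n}|(\partial\mathcal{B})_+$, shows that $x(\xi)$ is the unique boundary point with outer normal $\xi$, so the only point where $\langle \vec{n}(\cdot), \xi\rangle = 1$ is $x(\xi)$; the task is thus to make this gap quantitative.

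Next I would set up local coordinates so that $x(\xi) = 0$, the tangent hyperplane at $x(\xi)$ is $\{y_d = 0\}$, and $\xi = e_d$. Write the boundary locally as $y_d = \phi(y')$ with $\phi(0) = 0$ and $\nabla\phi(0) = 0$, so that
\begin{equation*}
\vec{n}(y',\phi(y')) = \bigl(-\nabla\phi(y'),\,1\bigr)/\sqrt{1+|\nabla\phi(y')|^2}.
\end{equation*}
The eigenvalues of $\nabla^2\phi(0)$ are the principal curvatures $\kappa_1,\dots,\kappa_{d-1}$ at $x(\xi)$. Since $\mathcal{B}$ is compact with smooth boundary the $\kappa_j$ are bounded above by a constant $M$, and since $\prod\kappa_j = K_\xi$ the smallest eigenvalue satisfies $\kappa_{\min} \geqslant K_\xi/M^{d-2} \gtrsim K_\xi$. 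Applying Lemma~\ref{curv-in-ball} to control the Gaussian curvature throughout $B(x(\xi), c_0 K_\xi)\cap \partial\mathcal{B}$ and integrating $\nabla^2\phi$ along the segment from $0$ to $y'$, I would obtain the uniform lower bound $|\nabla\phi(y')| \gtrsim K_\xi |y'|$ for $|y'| \leqslant c_0 K_\xi$, where $c_0$ is a small constant depending only on $\mathcal{B}$.

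With this bound in hand, the proof splits into a local and a global regime. In the local range $rK_\xi \leqslant |x - x(\xi)| \leqslant c_0 K_\xi$, the parametrization gives $|x-x(\xi)| \asymp |y'|$, hence $|\vec{n}(x) - \xi| \asymp |\nabla\phi(y')| \gtrsim K_\xi |y'| \gtrsim r K_\xi^2$, which yields $\langle \vec{n}(x), \xi\rangle = 1 - \tfrac{1}{2}|\vec{n}(x)-\xi|^2 \leqslant 1 - cr^2 K_\xi^4$. In the global range $|x - x(\xi)| > c_0 K_\xi$, I would invoke the contrapositive of the bijectivity statement in the proof of Lemma~\ref{curv-in-ball}: if $\vec{n}(x)$ were in $B(\xi, c' K_\xi^2) \cap S^{d-1}$ then $\vec{n}(x) \in \vec{n}((\partial\mathcal{B})_+)$ and the unique point realizing it would lie in $B(x(\xi), c_0 K_\xi)$, contradicting the assumption on $x$. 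Therefore $|\vec{n}(x) - \xi| \geqslant c' K_\xi^2$, giving $\langle \vec{n}(x), \xi\rangle \leqslant 1 - c'' K_\xi^4$, which dominates $1 - c'' c_3^{-2} r^2 K_\xi^4$ once $c_3$ is chosen small.

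The main obstacle I foresee is the uniform gradient bound $|\nabla\phi(y')| \gtrsim K_\xi |y'|$ throughout the whole ball $B(0, c_0 K_\xi)$ rather than just infinitesimally at $y' = 0$; this is where Lemma~\ref{curv-in-ball} is essential, since it ensures that the principal curvatures along the segment from $0$ to $y'$ remain comparable to those at $x(\xi)$ and thus that $\nabla^2\phi$ remains bounded below on the segment. Once that is secured, the remaining trigonometric and geometric manipulations are routine, and the constants $c$ and $c_3$ can be explicitly extracted from $c_0$, $c'$, $M$, and the curvature bounds of $\mathcal{B}$.
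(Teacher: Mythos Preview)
Your argument is correct, and the underlying idea---that points outside $B(x(\xi), rK_\xi)$ must have normals at spherical distance at least $\sim rK_\xi^2$ from $\xi$---is exactly the paper's. The execution differs, however. The paper applies the quantitative inverse function theorem (Lemma~\ref{app:lemma:1}) directly to the Gauss map with the variable radius $rK_\xi$: this shows in one stroke that, for every $r\leqslant c_3$, the image $\vec n\big(B(x(\xi),rK_\xi)\cap\partial\mathcal{B}\big)$ already contains $B(\xi,c'rK_\xi^2)\cap S^{d-1}$, and global injectivity of $\vec n$ on $(\partial\mathcal{B})_+$ (together with the disjointness $\vec n((\partial\mathcal{B})_+)\cap\vec n((\partial\mathcal{B})_0)=\emptyset$) then forces $\vec n(x)\notin B(\xi,c'rK_\xi^2)$ whenever $x\notin B(x(\xi),rK_\xi)$. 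Converting $|\vec n(x)-\xi|\geqslant c'rK_\xi^2$ into an inner-product bound is then the final line.

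By contrast, you split into a near regime $rK_\xi\leqslant|x-x(\xi)|\leqslant c_0K_\xi$, where you redo the inverse-function-theorem computation by hand via graph coordinates and Hessian integration (using Lemma~\ref{curv-in-ball} to stabilize the smallest principal curvature along the segment), and a far regime $|x-x(\xi)|>c_0K_\xi$, where you invoke Lemma~\ref{curv-in-ball} at the single fixed scale $c_0K_\xi$ and absorb the loss through the factor $r^2\leqslant c_3^2$. Both regimes are handled correctly, but note that your near-regime computation is precisely what Lemma~\ref{app:lemma:1} packages: the Hessian lower bound $\nabla^2\phi\gtrsim K_\xi I$ is the determinant hypothesis there, and your gradient bound $|\nabla\phi(y')|\gtrsim K_\xi|y'|$ is the bi-Lipschitz conclusion. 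So your two-regime split is a valid but more laborious route; the paper's single application of Lemma~\ref{app:lemma:1} with parameter $r$ subsumes it.
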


\begin{proof} It follows from Lemma \ref{app:lemma:1} that there exists a constant
$c_3>0$ (depending only on $\mathcal{B}$) such that, for any
$r\leqslant c_3$, the Gauss map is bijective from $B(x(\xi), r
K_{\xi})\cap \partial\mathcal{B}$ and $B(x(-\xi), r
K_{-\xi})\cap\partial\mathcal{B}$ to two subsets of $S^{d-1}$ which
contain $B(\xi, c' r(K_{\xi})^2)\cap S^{d-1}$ and $B(-\xi, c'
r(K_{-\xi})^2)\cap S^{d-1}$ respectively where the constant $c'>0$
depends only on $\mathcal{B}$. Then the lemma follows easily with
$c=2c'^2/\pi^2$.
\end{proof}

\begin{theorem} \label{asym-R^d}
Assume $\mathcal{B}\subset \mathbb{R}^d$ ($d\geqslant 2$) is a
compact convex domain and its boundary is a smooth hypersurface
(curve if $d=2$). Let $n_l$ ($l=1, \ldots, d$) be the
$l^{\textrm{th}}$ component of the Gauss map of $\partial
\mathcal{B}$ and $dS$ the induced Lebesgue measure on surfaces
(curves if $d=2$). For any $\xi \in \vec{n}((\partial
\mathcal{B})_+)\cap (-\vec{n}((\partial \mathcal{B})_+))$ we have
\begin{equation*}
\begin{split}
\widehat{n_l dS}(\lambda \xi)&=(e^{\pi i(d-1)/4} \xi_l
(K_{\xi})^{-1/2}e^{-2\pi i\lambda
H(\xi)}\\
&\quad +e^{-\pi i(d-1)/4}(-\xi_l)(K_{-\xi})^{-1/2}e^{2\pi i\lambda H(-\xi)})\lambda^{-(d-1)/2}\\
&\quad\quad
+O(\lambda^{-(d+1)/2}\delta^{-(d+5)/2}+\lambda^{-N}\delta^{-4N})
\quad \textrm{for $\lambda >0$},
\end{split}
\end{equation*}
where $H(\xi)=\sup_{y\in \mathcal{B}} \langle y, \xi\rangle$, $N\in
\mathbb{N}$, and $\delta=\min(K_{\xi}, K_{-\xi})$. The implicit
constant depends only on $N$ and $\mathcal{B}$.
\end{theorem}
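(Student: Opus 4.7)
The plan is to write
$$\widehat{n_l\, dS}(\lambda \xi) = \int_{\partial \mathcal{B}} n_l(x)\, e(\lambda \langle x, \xi\rangle)\, dS(x)$$
and split the surface integral via a smooth partition of unity into three pieces: one localized in $B(x(\xi), r K_\xi)\cap\partial\mathcal{B}$, one in $B(x(-\xi), r K_{-\xi})\cap\partial\mathcal{B}$, and a remaining ``separated'' piece supported away from both balls (here $r \leq c_3$ is a small absolute constant, $c_3$ from Lemma~\ref{monotonicity}). The two local pieces will generate the two main terms by stationary phase, while the separated piece will contribute only to the error via integration by parts.

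For the local piece near $x(\xi)$, after rotating so that $\xi = e_d$ parametrize $\partial\mathcal{B}$ as a graph $x_d = x(\xi)_d + \phi(y)$ over its tangent hyperplane. Then $\phi(0)=0$, $\nabla\phi(0)=0$, $\det\nabla^2\phi(0) = K_\xi$, and the phase reduces to $\lambda H(\xi)+\lambda\phi(y)$. A quantitative stationary phase expansion (with smooth cutoff supported in a ball of radius $\asymp K_\xi$ on which Lemma~\ref{curv-in-ball} ensures the higher derivatives of $\phi$ and of the amplitude are bounded by fixed negative powers of $K_\xi$) yields the leading-order contribution
$$e^{\pi i(d-1)/4}\,\xi_l (K_\xi)^{-1/2}\,\lambda^{-(d-1)/2}\, e(-\lambda H(\xi))$$
plus a remainder of order $\lambda^{-(d+1)/2}\delta^{-(d+5)/2}$; the $\delta$-power arises from the second term of the expansion, which involves $(\det\nabla^2\phi(0))^{-3/2}$ together with fourth derivatives of $\phi$ and second derivatives of the cutoff at scale $\delta$. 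The piece localized near $x(-\xi)$ is treated identically and produces the second main term, with the opposite sign in the phase.

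For the separated piece, Lemma~\ref{monotonicity} gives $|\langle \vec n(x), \xi\rangle|\leq 1 - cr^{2}\delta^{4}$ on its support; hence the tangential gradient of $\langle \cdot, \xi\rangle$ on $\partial\mathcal{B}$, whose magnitude equals $\sqrt{1-\langle \vec n(x),\xi\rangle^2}$, is bounded below by $\sqrt{c}\,r\delta^{2}$. Cover $\partial\mathcal{B}$ by finitely many local coordinate patches and, on each, integrate by parts $N$ times using the first-order operator $L = (2\pi i\lambda |\nabla_T\psi|^2)^{-1}\nabla_T\psi\cdot\nabla_T$ with $\psi = \langle\cdot,\xi\rangle$. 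Each application saves a factor of $\lambda^{-1}(\sqrt c\, r\delta^{2})^{-1}$ while producing at most one extra negative power of $\delta$ from differentiating $|\nabla_T\psi|^{-2}$ or the amplitude $n_l\, dS$ (whose derivatives are controlled by Lemma~\ref{curv-in-ball} at the curvature scale). After $N$ steps one obtains a bound of order $\lambda^{-N}\delta^{-4N}$, matching the stated error.

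The main obstacle is the quantitative stationary phase step in the second paragraph: one must control the amplitude, the remainder of the Taylor expansion of $\phi$, and the error term in the expansion on a neighborhood whose size shrinks with $K_\xi$, and balance derivative losses against oscillatory gains so as not to exceed $\delta^{-(d+5)/2}$. The non-stationary phase step is mostly bookkeeping but also rests on the curvature-scale estimates of Section~\ref{geometry}, in particular on the lower bound from Lemma~\ref{monotonicity} and on derivative bounds for $\vec n$ and $dS$ near degenerate boundary points.
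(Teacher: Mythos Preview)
Your proposal is correct and follows essentially the same route as the paper: the same three-piece partition at scale $\asymp K_{\pm\xi}$, stationary phase on the two local pieces, and non-stationary integration by parts on the remainder via Lemma~\ref{monotonicity}. The only notable technical variation is that the paper, instead of applying stationary phase directly to the graph phase $\phi$, first rescales $u\mapsto K_\xi u$ and then applies a quantitative Morse lemma to reduce the phase to an exact quadratic before invoking Lemma~\ref{app:lemma:hor}; this makes the $K_\xi$-bookkeeping for the remainder $O(\lambda^{-(d+1)/2}K_\xi^{-(d+5)/2})$ cleaner than the heuristic you sketched (your term-by-term accounting of the $\delta$-powers is a bit loose, and Lemma~\ref{curv-in-ball} is not what controls the derivatives of $\phi$ --- those are simply $O(1)$ by compactness).
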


\begin{proof}
We will only prove the case $d\geqslant 3$ below while the case
$d=2$ is easier and can be handled in the same way. Note that there
exists a $C_0>0$ such that, for any $x\in
\partial \mathcal{B}$, the boundary $\partial \mathcal{B}$ in a
neighborhood of $x$ can be parametrized by
\begin{equation}
\begin{split}
\vec{r}(u, x)=x+&\sum_{j=1}^{d-1}u_j \vec{t}_j(x)+h(u,
x)(-\vec{n}(x)) , \\
&\textrm{for $u=(u_1, \ldots, u_{d-1})\in B_0=B(0, C_0)\subset
\mathbb{R}^{d-1}$},
\end{split}\label{parametrization}
\end{equation}
where $\{\vec{t}_j(x)\}_1^{d-1}$ is an orthonormal basis of the
tangent plane of $\partial \mathcal{B}$ at $x$ (we require that the
basis $\{ \vec{t}_1(x), \ldots, \vec{t}_{d-1}(x), -\vec{n}(x)\}$ has
the same orientation as $\{e_1, \ldots, e_d \}$) and $h(\cdot \, ,
x)\in C^{\infty}(B_0)$ such that $h(0, x)=0$, $\nabla_{u}h(0, x)=0$,
and $\det\nabla^2_{uu}h(0, x)=K(x)$.

For any fixed $\xi \in \vec{n}((\partial \mathcal{B})_+)\cap
(-\vec{n}((\partial \mathcal{B})_+))$ decompose $n_l$ as a sum
\begin{equation*}
n_l=\psi_1+\psi_2+\psi_3
\end{equation*}
where
\begin{equation*}
\psi_1(x, \xi)=n_l(x)\chi_0(\frac{x-x(\xi)}{c_4 K_{\xi}}) \textrm{
and } \psi_2(x, \xi)=n_l(x)\chi_0(\frac{x-x(-\xi)}{c_4 K_{-\xi}}),
\end{equation*}
where $c_4>0$ is determined below and $\chi_0$ is the fixed cut-off
function (see \S \ref{introduction}).

We first estimate $\widehat{\psi_1 dS}$ (while $\widehat{\psi_2 dS}$
is handled in the same way). Applying the parametrization
\eqref{parametrization} at $x(\xi)$ yields
\begin{equation}
\widehat{\psi_1 dS}(\lambda \xi)=e^{-2\pi i\lambda \langle \xi,
x(\xi) \rangle} \int \tau(u, \xi) e^{2\pi i\lambda h(u, x(\xi))}
\,du, \label{1stpart}
\end{equation}
where $\tau(u, \xi)=\psi_1(\vec{r}(u, x(\xi)), \xi)(1+|\nabla_u h(u,
x(\xi))|^2)^{1/2}$ such that
\begin{equation*}
\tau(\cdot \, , \xi)\in  C^{\infty}_{c}(B(0, c_4 K_{\xi}))
\end{equation*}
and
\begin{equation*}
|D^{\nu}_{u}\tau(u, \xi) |\leqslant C(c_4 K_{\xi})^{-|\nu|}.
\end{equation*}

By a change of variable the integral in \eqref{1stpart}, denoted by
$\Delta(\xi)$, is
\begin{equation*}
\Delta(\xi)=K_{\xi}^{d-1} \int \tau(K_{\xi}u, \xi) e^{2\pi i\lambda
h(K_{\xi}u, x(\xi))} \,du.
\end{equation*}
Applying a quantitative version of the Morse Lemma (see the proof of
Sogge and Stein's \cite[Lemma 2]{soggestein}) we can find an
$\alpha_1>0$ and a smooth invertible mapping $u\mapsto v$ from $B(0,
\alpha_1)$ to a neighborhood of the origin in $v$-space, so that
$|D^{\nu}_{u}v|\leqslant C$, $|D^{\nu}_v u|\leqslant C$,
$\det(\nabla_v u(0))=1$, and
\begin{equation*}
h(K_{\xi}u, x(\xi))=K_{\xi}^2(\mu_1 v_1^2+\ldots+\mu_{d-1}
v_{d-1}^2)/2, \quad u\in B(0, \alpha_1),
\end{equation*}
where $\mu_1, \ldots, \mu_{d-1}$ are the eigenvalues of the matrix
$\nabla^2_{uu}h(0, x(\xi))$. Let $c_4\leqslant \alpha_1$. Then
\begin{equation*}
\Delta(\xi)=K_{\xi}^{d-1} \int \tilde{\tau}(v, \xi)
e^{i\tilde{\lambda}(\mu_1 v_1^2+\ldots+\mu_{d-1} v_{d-1}^2)/2} \,
dv,
\end{equation*}
where $\tilde{\lambda}=2\pi \lambda K_{\xi}^2$ and $\tilde{\tau}(v,
\xi)=\tau(K_{\xi}u(v), \xi)|\textrm{det}(\nabla_v u)|$. Applying
Lemma \ref{app:lemma:hor} to the integral above yields an asymptotic
expansion, which in turn gives
\begin{equation*}
\begin{split}
\widehat{\psi_1 dS}(\lambda \xi)&=e^{\pi i(d-1)/4}\xi_l
(K_{\xi})^{-1/2}e^{-2\pi i\lambda
H(\xi)}\lambda^{-(d-1)/2}\\
&\quad +O(\lambda^{-(d+1)/2}K_{\xi}^{-(d+5)/2}).
\end{split}
\end{equation*}

The estimate $\widehat{\psi_3 dS}=O(\lambda^{-N}\delta^{-4N})$
follows from Lemma \ref{monotonicity} and integration by parts (see
Stein~\cite[p.~350]{stein} for a similar argument). This finishes
the proof.
\end{proof}

As a consequence of the Gauss--Green formula we get:

\begin{corollary} \label{s4-asym}
Assume $\mathcal{B}\subset \mathbb{R}^d$ ($d\geqslant 2$) is a
compact convex domain and its boundary is a smooth hypersurface
(curve if $d=2$). For any $\xi \in \vec{n}((\partial
\mathcal{B})_+)\cap (-\vec{n}((\partial \mathcal{B})_+))$ we have
\begin{equation*}
\begin{split}
\widehat{\chi}_{\mathcal{B}}(\lambda \xi)&=((2\pi)^{-1}e^{\pi
i(d+1)/4} (K_{\xi})^{-1/2}e^{-2\pi i\lambda
H(\xi)}\\
&\quad +(2\pi)^{-1}e^{-\pi i(d+1)/4}(K_{-\xi})^{-1/2}e^{2\pi i\lambda H(-\xi)})\lambda^{-(d+1)/2}\\
&\quad\quad
+O(\lambda^{-(d+3)/2}\delta^{-(d+5)/2}+\lambda^{-N-1}\delta^{-4N})
\quad\quad \textrm{for $\lambda >0$},
\end{split}
\end{equation*}
where $H(\xi)=\sup_{y\in \mathcal{B}} \langle y, \xi\rangle$, $N\in
\mathbb{N}$, and $\delta=\min(K_{\xi}, K_{-\xi})$. The implicit
constant depends only on $N$ and $\mathcal{B}$.
\end{corollary}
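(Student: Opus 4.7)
The plan is straightforward: deduce the statement from Theorem~\ref{asym-R^d} by writing $\widehat{\chi}_{\mathcal{B}}(\lambda\xi)$ as a boundary integral via Gauss--Green. Concretely, for any $\xi\in S^{d-1}$ the vector field $F(x)=\frac{-1}{2\pi i\lambda}\, e^{-2\pi i\lambda\langle x,\xi\rangle}\,\xi$ satisfies $\nabla\cdot F(x)=e^{-2\pi i\lambda\langle x,\xi\rangle}$, so the divergence theorem gives
\begin{equation*}
\widehat{\chi}_{\mathcal{B}}(\lambda\xi)=\int_{\mathcal{B}}e^{-2\pi i\lambda\langle x,\xi\rangle}\,dx=\frac{-1}{2\pi i\lambda}\int_{\partial\mathcal{B}}e^{-2\pi i\lambda\langle x,\xi\rangle}\langle\xi,\vec{n}(x)\rangle\,dS(x)=\frac{-1}{2\pi i\lambda}\sum_{l=1}^{d}\xi_{l}\,\widehat{n_{l}\,dS}(\lambda\xi).
\end{equation*}

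Next I would substitute the asymptotic expansion of Theorem~\ref{asym-R^d} into each $\widehat{n_{l}\,dS}(\lambda\xi)$. Multiplying the leading term by $\xi_{l}$ and summing, the factor $\xi_{l}^{2}$ appears and $\sum_{l}\xi_{l}^{2}=|\xi|^{2}=1$, collapsing the sum to
\begin{equation*}
\sum_{l=1}^{d}\xi_{l}\widehat{n_{l}\,dS}(\lambda\xi)=\bigl(e^{\pi i(d-1)/4}K_{\xi}^{-1/2}e^{-2\pi i\lambda H(\xi)}-e^{-\pi i(d-1)/4}K_{-\xi}^{-1/2}e^{2\pi i\lambda H(-\xi)}\bigr)\lambda^{-(d-1)/2}+O(\cdots),
\end{equation*}
where the remainder preserves the shape $O(\lambda^{-(d+1)/2}\delta^{-(d+5)/2}+\lambda^{-N}\delta^{-4N})$ because the same bound is dimension-independent of $l$ and there are only $d$ terms. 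Dividing by $-2\pi i\lambda$ multiplies the main term by $(2\pi)^{-1}\lambda^{-1}e^{i\pi/2}$, so the exponent in the powers of $\lambda$ moves from $-(d-1)/2$ to $-(d+1)/2$, and the remainder gains an extra $\lambda^{-1}$, yielding exactly the stated error $O(\lambda^{-(d+3)/2}\delta^{-(d+5)/2}+\lambda^{-N-1}\delta^{-4N})$.

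The only nontrivial bookkeeping is the phase normalization: one has to verify the identities
\begin{equation*}
\frac{e^{i\pi/2}}{2\pi}\cdot e^{\pi i(d-1)/4}=\frac{e^{\pi i(d+1)/4}}{2\pi}\quad\text{and}\quad\frac{-e^{i\pi/2}}{2\pi}\cdot e^{-\pi i(d-1)/4}=\frac{e^{-\pi i(d+1)/4}}{2\pi},
\end{equation*}
both of which are direct (the second uses $-1=e^{i\pi}$ and reduction $\bmod\,2\pi$). After this the corollary reads off. This is really the only step where a small error could slip in, so I expect it to be the ``main obstacle,'' although it is entirely computational; there is no analytic difficulty beyond what is already encoded in Theorem~\ref{asym-R^d}. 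The argument is identical in the $d=2$ case since Theorem~\ref{asym-R^d} was stated uniformly for $d\geqslant 2$.
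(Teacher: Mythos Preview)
Your proposal is correct and is precisely the argument the paper has in mind: the paper simply writes ``As a consequence of the Gauss--Green formula we get'' before stating the corollary, and your computation (divergence theorem with the vector field $F(x)=\frac{-1}{2\pi i\lambda}e^{-2\pi i\lambda\langle x,\xi\rangle}\xi$, then plugging in Theorem~\ref{asym-R^d} and using $\sum_l\xi_l^2=1$) is exactly how one fills in that line. Your phase bookkeeping is correct as well.
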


%%%%%%%%%%%%%%%%%%%%%%%%%%%%%%%%%%%%%%%%%%%%%%%%%%%%%%%%%%%%%%%%%%%%%%%%%%%%%%%%%%%%%%%%%
\section{The $\mathbb{R}^d$ ($d\geqslant 3$) case}\label{mainterm}

By a very standard argument, Theorem \ref{Rd-finitetype} follows
easily from the following lemma (see Guo~\cite[p.~18]{guo2},
Iosevich~\cite[p.~26-27]{iosevich}, or Iosevich, Sawyer, and
Seeger~\cite[p.~168-169]{I-S-S-0} for this argument).

\begin{lemma}\label{lemma5:1}
Let $\mathcal{B}\subset \mathbb{R}^d$ ($d\geqslant 3$) be a compact
convex domain and $\rho\in C_0^{\infty}(\mathbb{R}^d)$ such that
$\int_{\mathbb{R}^d}\rho(y)\,dy=1$. If the boundary is a smooth
hypersurface of finite type $\omega$ then, for $j\in \mathbb{N}$, we
have
\begin{equation}
\int_{SO(d)} \sup_{2^{j-1}\leqslant t\leqslant 2^{j+2}}
t^{2d/(d+1)+\zeta_d+\sigma(d, \omega)}\big|\sum_{k\in
\mathbb{Z}^d_*}
\widehat{\chi}_{\mathcal{B}_\theta}(tk)\widehat{\rho}(\varepsilon
k)\big| \,d\theta \lesssim 1, \label{Rd-f-ineq}
\end{equation}
where $d\theta$ is the normalized Haar measure on $SO(d)$, $\zeta_d$
and $\sigma(d, \omega)$ are given by \eqref{zeta_d} and
\eqref{sigma_d_w} respectively, and
\begin{equation*}
\varepsilon=\varepsilon(j, d, \omega)=2^{-j\alpha(d, \omega)},
\end{equation*}
\begin{equation*}
\alpha(d,\omega)=\left\{ \begin{array}{llll}
1-2[6(\omega-2)d^4+112(\omega-2)d^3-4(\omega-2)d^2  \\
\quad +(410-203\omega)d+82\omega-156]/\Box  & \textrm{for $3\leqslant d\leqslant 4$},\\
1-[4(\omega-2)d^4+90(\omega-2)d^3+61(\omega-2)d^2  \\
\quad -(227\omega-456)d+60\omega-112]/\triangle  & \textrm{for
$d\geqslant 5$},
\end{array}\right.
\end{equation*}
with $\Box$ and $\triangle$ given by \eqref{square} and
\eqref{triangle} respectively.
\end{lemma}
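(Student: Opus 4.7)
My plan is to bound the sum in \eqref{Rd-f-ineq} by inserting the asymptotic formula of Corollary \ref{s4-asym} for $\widehat{\chi}_{\mathcal{B}_\theta}(tk)$ and then treating the resulting oscillatory sum by combining two techniques: (i) a global angular/integral estimate in $\theta$ making use of the $L^p(S^{d-1})$ boundedness of $\Phi$ given by \eqref{svenssonThm}, and (ii) Van der Corput's $A^q B$-process applied to the radial exponential sum, with the requisite non-vanishing Hessian provided by Lemma \ref{non-vanishinglemma}.

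First I would restrict attention (modulo fast decay) to those $k$ for which $k/|k|\in \vec{n}((\partial \mathcal{B}_\theta)_+)\cap(-\vec{n}((\partial \mathcal{B}_\theta)_+))$; for other directions, Lemma \ref{monotonicity} together with integration by parts makes $\widehat{\chi}_{\mathcal{B}_\theta}(tk)$ decay faster than any polynomial in $t$. Because $\widehat{\rho}$ is Schwartz, $\widehat{\rho}(\varepsilon k)$ effectively restricts the sum to $|k|\lesssim \varepsilon^{-1}=2^{j\alpha(d,\omega)}$. On this range, the remainder terms $O(\lambda^{-(d+3)/2}\delta^{-(d+5)/2}+\lambda^{-N-1}\delta^{-4N})$ from Corollary \ref{s4-asym} contribute a negligible amount once summed, since the gain $t^{-1}$ compared with the main term beats the loss from the cutoff $\varepsilon^{-1}$ whenever $\alpha(d,\omega)<1$. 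The supremum over $t\in[2^{j-1},2^{j+2}]$ is reduced to pointwise estimates via the standard inequality
\[
\sup_{t\in I}|F(t)|\le|F(t_0)|+\int_I|F'(t)|\,dt,
\]
where differentiating in $t$ costs at most a factor $|k|\lesssim\varepsilon^{-1}$, a loss absorbed by the exponents in the statement.

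The heart of the argument is then to bound, uniformly in $\theta$ and for each dyadic scale $|k|\asymp R\le \varepsilon^{-1}$ and curvature level $K^\theta_k\asymp \delta=2^{-m}$, a sum of the form
\[
t^{-(d+1)/2}\sum_{k}|k|^{-(d+1)/2}(K^\theta_k)^{-1/2}\,e^{-2\pi i t H_\theta(k)}\,\widehat{\rho}(\varepsilon k)\;+\;(\text{conjugate from $-k$}).
\]
Following the paper's strategy, I split these contributions according to the sizes of $R$ and $\delta$ into a \emph{Sum I}, where I take absolute values and integrate in $\theta$ against the measure of $\{k/|k|:K_k^\theta\asymp\delta\}$ (controlled by Lemma \ref{s2-lemma2} and, in the finite-type case, Lemma \ref{s2-lemma3}, which is equivalent to using the $L^p$-bound for $\Phi$ from \eqref{svenssonThm}), and a \emph{Sum II}, where I exploit the oscillation $e^{-2\pi i t H_\theta(k)}$ by running Van der Corput's $A^q B$-process along the integer directions $v_1,\dots,v_d\in\mathbb{Z}^d$ constructed in Lemma \ref{non-vanishinglemma}. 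The non-vanishing determinant estimate \eqref{bbb} is precisely what enables the stationary-phase $B$-step and yields square-root cancellation; the resulting one-dimensional exponential sum bound is the content of Proposition \ref{exp-sum}, proved in Appendix \ref{app2}.

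The main obstacle, and the source of the elaborate exponents $\zeta_d$, $\sigma(d,\omega)$, and $\alpha(d,\omega)$ in the statement, is the optimization. The dyadic threshold separating the Sum I and Sum II regimes, the number of $A$-steps $q$ used in the $A^q B$-process, and the regularization scale $\varepsilon$ must all be chosen jointly so that (a) the Sum I and Sum II estimates balance at the threshold; (b) the decompositions in $R$, $\delta$, and $m$ sum convergently; and (c) the cutoff $\varepsilon^{-1}=2^{j\alpha(d,\omega)}$ is small enough to render the error term of Corollary \ref{s4-asym} harmless, yet large enough for Sum II to produce the claimed gain. Solving this linear optimization in the exponents produces the two cases $3\le d\le 4$ and $d\ge 5$ appearing in \eqref{zeta_d}--\eqref{triangle}, reflecting different optimal choices of $q$ in the $A^q B$-process.
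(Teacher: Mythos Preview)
Your proposal has the right ingredients but contains a genuine gap in how the low-curvature directions are handled, and this forces a different order of operations from the one you describe.

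You write that for directions $k/|k|\notin \vec{n}((\partial\mathcal{B}_\theta)_+)\cap(-\vec{n}((\partial\mathcal{B}_\theta)_+))$, Lemma~\ref{monotonicity} plus integration by parts gives decay faster than any polynomial. This is false: those are precisely the directions where the Gaussian curvature vanishes, and $\widehat{\chi}_{\mathcal{B}_\theta}(tk)$ decays \emph{slowly} there (only like $t^{-(d-1)/\omega}$ in the worst case). Lemma~\ref{monotonicity} is stated only for $\xi$ with $K_{\pm\xi}>0$ and is used inside the proof of Theorem~\ref{asym-R^d} to control the contribution away from $x(\pm\xi)$; it says nothing about flat directions. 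Consequently you cannot first insert Corollary~\ref{s4-asym} globally and then split: the corollary is simply unavailable, and its error terms $\delta^{-(d+5)/2}$, $\delta^{-4N}$ blow up, as the curvature tends to zero.

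The paper therefore reverses your order. One fixes a single threshold $\delta=2^{-j\beta(d,\omega)}$ and splits the lattice sum as Sum~I over $D_1(\delta,\theta)=\{k:K^\theta_{\pm k}\geqslant\delta\}$ and Sum~II over its complement. On Sum~II one does \emph{not} use the asymptotic expansion at all; instead one bounds $|tk|^{(d+1)/2}|\widehat{\chi}_{\mathcal{B}_\theta}(tk)|\leqslant\Phi(\theta^t k/|k|)$ and integrates over $SO(d)$ using Svensson's pointwise bound together with Lemmas~\ref{s2-lemma2} and~\ref{s2-lemma3}, which gives the factor $\delta^{1/2+1/(d-1)(\omega-2)}$. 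Only on Sum~I, where the curvature is uniformly bounded below by $\delta$, is Corollary~\ref{s4-asym} applied; the remainder $R_1$ is then harmless, and the main oscillatory term is treated exactly as you describe via the cones from Lemma~\ref{non-vanishinglemma} and Proposition~\ref{exp-sum}. (Your Sum~I/Sum~II labels are swapped relative to the paper's, which is cosmetic.) Also, the $\sup_t$ is not handled by differentiating in $t$ as you suggest: for Sum~II the supremum is absorbed into the definition of $\Phi$, and for Sum~I the exponential-sum bounds are uniform in $t$ on the dyadic interval, so no extra factor of $|k|$ is incurred.
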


\begin{proof}
Let $t\in [2^{j-1}, 2^{j+2}]$ and $\delta=\delta(j, d,
\omega)=2^{-j\beta(d, \omega)}$ with
\begin{equation*}
\beta(d, \omega)=\left\{ \begin{array}{ll}
2(\omega-2)d(d-1)(d-2)/\Box & \textrm{for $3\leqslant d\leqslant 4$},\\
(\omega-2)d(d-1)(d-3)/\triangle & \textrm{for $d\geqslant 5$}.
\end{array}\right.
\end{equation*}

For any $\theta \in SO(d)$ we have the following splitting
\begin{equation*}
\sum_{k\in \mathbb{Z}^d_*}
\widehat{\chi}_{\mathcal{B}_\theta}(tk)\widehat{\rho}(\varepsilon
k)=\operatorname{Sum\ I}(t, \varepsilon, \delta,
\theta)+\operatorname{Sum\ II}(t, \varepsilon, \delta, \theta),
\end{equation*}
where
\begin{equation*}
\operatorname{Sum\ I}(t, \varepsilon, \delta, \theta)=\sum_{k\in
D_1(\delta, \theta)}
\widehat{\chi}_{\mathcal{B}_\theta}(tk)\widehat{\rho}(\varepsilon
k),
\end{equation*}
\begin{equation*}
\operatorname{Sum\ II}(t, \varepsilon, \delta, \theta)=\sum_{k\in
D_2(\delta, \theta)}
\widehat{\chi}_{\mathcal{B}_\theta}(tk)\widehat{\rho}(\varepsilon
k),
\end{equation*}
and $D_1(\delta, \theta)$, $D_2(\delta, \theta)$ are two regions
defined as follows:
\begin{equation*}
D_2(\delta, 0)=\{\xi\in \mathbb{R}^d_*: \xi/|\xi| \textrm{ or }
-\xi/|\xi| \in \vec{n}(\{x\in
\partial \mathcal{B}: K(x)<\delta \})\},
\end{equation*}
$D_1(\delta, 0)=\mathbb{R}^d_{*}\setminus D_2(\delta, 0)$,
$D_1(\delta, \theta)=\theta D_1(\delta, 0)$, and $D_2(\delta,
\theta)=\theta D_2(\delta, 0)$.

The estimate \eqref{Rd-f-ineq} follows from the next two claims.
Notice that the finite type condition is only used in the estimate
of Sum II and the size estimate of $|D_2(\delta, 0)|$.

\begin{claim}\label{sumII-finitetype}
\begin{equation*}
\int_{SO(d)} \sup_{2^{j-1}\leqslant t\leqslant 2^{j+2}}
t^{2d/(d+1)+\zeta_d+\sigma(d, \omega)}\big| \operatorname{Sum\
II}(t, \varepsilon, \delta, \theta)\big| \,d\theta \lesssim 1
\end{equation*}
with an implicit constant depending only on $\mathcal{B}$.
\end{claim}
\begin{claim} \label{sumI-finitetype}
\begin{equation*}
\int_{SO(d)} \sup_{2^{j-1}\leqslant t\leqslant 2^{j+2}}
t^{2d/(d+1)+\zeta_d+\sigma(d, \omega)}\big| \operatorname{Sum\ I}(t,
\varepsilon, \delta, \theta) \big| \,d\theta \lesssim 1
\end{equation*}
with an implicit constant depending only on $\mathcal{B}$.
\end{claim}
\end{proof}

\begin{proof}[Proof of Claim \ref{sumII-finitetype}]
\begin{equation*}
\textrm{L.H.S.}\lesssim (2^j)^{2d/(d+1)-(d+1)/2+\zeta_d+\sigma(d,
\omega)} \sum_{k\in \mathbb{Z}^d_*}
|k|^{-(d+1)/2}|\widehat{\rho}(\varepsilon k)|(\ast),
\end{equation*}
where
\begin{equation*}
(\ast):=\int_{SO(d)} 1_{D_2(\delta, \theta)}(k)
(\sup_{2^{j-1}\leqslant t\leqslant 2^{j+2}}
|tk|^{(d+1)/2}|\widehat{\chi}_{\mathcal{B}_\theta}(tk)|) \,d\theta.
\end{equation*}
Recalling the definition \eqref{def-Phi} of the function $\Phi$, we
get
\begin{align*}
(\ast)&\lesssim \int_{S^{d-1}\cap D_2(\delta, 0)} \Phi(\xi) \,d\xi,\\
&\lesssim \int_{S^{d-1}\cap D_2(\delta, 0)} (K_{\xi})^{-1/2}+(K_{-\xi})^{-1/2}+1 \,d\xi,\\
&\lesssim \int_{\vec{n}(\{x\in \partial \mathcal{B}: K(x)<
\delta \})} (K_{\xi})^{-1/2} \,d\xi,\\
&\lesssim \int_{\{x\in \partial \mathcal{B}: K(x)< \delta \}}
K(x)^{1/2} \,dA(x),\\
&\lesssim \delta^{1/2+1/(d-1)(\omega-2)}.
\end{align*}
In the above estimate of $(\ast)$ we have used Svensson's estimate
of $\Phi(\xi)$ for finite type domains (see \cite[p.~19]{svensson}),
the symmetry of $D_2(\delta, 0)$, a change of variables, and Lemma
\ref{s2-lemma3}. Hence
\begin{equation*}
\textrm{L.H.S.}\lesssim (2^j)^{2d/(d+1)-(d+1)/2+\zeta_d+\sigma(d,
\omega)}
\delta^{1/2+1/(d-1)(\omega-2)}\varepsilon^{-(d-1)/2}\lesssim 1.
\end{equation*}
\end{proof}

In order to prove Claim \ref{sumI-finitetype} we need an estimate of
$d$-dimensional exponential sums, which will be included in the
appendix for interested readers.

\begin{proof}[Proof of Claim \ref{sumI-finitetype}]
Note if $\xi \in D_1(\delta, \theta)$ then $K_{\pm
\xi}^{\theta}\geqslant \delta$. Applying Corollary \ref{s4-asym} to
Sum I yields
\begin{equation}
\operatorname{Sum\ I}(t, \varepsilon, \delta,
\theta)=(2\pi)^{-1}e^{\pi i(d+1)/4} S_1+(2\pi)^{-1}e^{-\pi i(d+1)/4}
\widetilde{S}_1+R_1,\label{decomp-sumI}
\end{equation}
where
\begin{equation*}
S_1(t, \varepsilon, \delta, \theta)=t^{-(d+1)/2}\sum_{k\in
D_1(\delta,
\theta)}|k|^{-(d+1)/2}(K_{k}^{\theta})^{-1/2}\widehat{\rho}(\varepsilon
k)e(tH_{\theta}(k)),
\end{equation*}
\begin{equation*}
\widetilde{S}_1(t, \varepsilon, \delta,
\theta)=t^{-(d+1)/2}\sum_{k\in D_1(\delta,
\theta)}|k|^{-(d+1)/2}(K_{-k}^{\theta})^{-1/2}\widehat{\rho}(\varepsilon
k)e(-tH_{\theta}(-k)),
\end{equation*}
and
\begin{equation}
R_1\lesssim \delta^{-2(d+1)}t^{-(d+3)/2}(\varepsilon^{-(d-3)/2}+\log
(\varepsilon^{-1}))\lesssim t^{-2d/(d+1)-\zeta_d-\sigma(d,
\omega)}.\label{R1}
\end{equation}

We will only estimate $S_1$ since $\widetilde{S}_1$ is similar.
Denote $\mathscr{C}_1=\{\xi \in \mathbb{R}^d: 1/2 \leqslant |\xi|
\leqslant 2 \}$. Let us introduce a dyadic decomposition and a
partition of unity.

Assume $\varphi \in C_0^{\infty}(\mathbb{R}^d)$ is a real radial
function such that $\supp(\varphi)\subset \mathscr{C}_1$,
$0\leqslant \varphi \leqslant 1$, and
\begin{equation*}
\sum_{l_0=-\infty}^{\infty}\varphi(\frac{\xi}{2^{l_0}})=1  \quad
\textrm{for }\xi\in \mathbb{R}^d\setminus \{0\}.
\end{equation*}
Denote
\begin{equation*}
S_{1, M}=\sum_{k\in D_1(\delta,
\theta)}\varphi(M^{-1}k)|k|^{-(d+1)/2}(K_{k}^{\theta})^{-1/2}\widehat{\rho}(\varepsilon
k)e(tH_{\theta}(k)),
\end{equation*}
then
\begin{equation}
S_1=t^{-(d+1)/2}\sum_{l_0=0}^{\infty}S_{1,
2^{l_0}}.\label{S1formula}
\end{equation}

We will estimate $S_{1, M}$ for a fixed $M=2^{l_0}$, $l_0\in
\mathbb{N}_0$. Let $q\in \mathbb{N}$. For each $\xi \in
\vec{n}((\partial \mathcal{B})_+)$ there exists a cone
\begin{equation*}
\mathfrak{C}(\xi, 2r(\xi)):=\mathop{\cup}\limits_{l>0} l B(\xi,
2r(\xi))\subset \mathbb{R}^d,
\end{equation*}
where $r(\xi)=c_2(K_\xi)^{d+2q+7-1/(d-1)}/2$ and $c_2$ is the
constant appearing in the statement of Lemma
\ref{non-vanishinglemma}. Note that Lemma \ref{curv-in-ball} implies
that $K_\eta \asymp K_\xi$ if $\eta\in \mathfrak{C}(\xi, 2r(\xi))$.
From the family of cones $\{\mathfrak{C}(\xi, r(\xi)/2): \xi \in
\vec{n}((\partial \mathcal{B})_+)\}$, we can choose, by a Vitali
procedure, a sequence $\{\mathfrak{C}(\xi_i,
r(\xi_i)/2)\}_{i=1}^{\infty}$ such that these cones still cover
$\vec{n}((\partial \mathcal{B})_+)$ and that $\{\mathfrak{C}(\xi_i,
r(\xi_i))\}_{i=1}^{\infty}$ satisfies the bounded overlap property.
Denote
\begin{equation*}
\mathfrak{C}_i^{\theta}=\theta\mathfrak{C}(\xi_i, r(\xi_i)).
\end{equation*}
Then the collection $\{\mathfrak{C}_i^{\theta}\}_{i=1}^{\infty}$
forms an open cover of $\vec{n}((\partial \mathcal{B}_{\theta})_+)$.
We can construct a partition of unity $\{\psi_i\}_{i=1}^{\infty}$
such that
\begin{enumerate}[\upshape (i)]
\item  $\sum_{i}\psi_i \equiv 1$ on $\vec{n}((\partial \mathcal{B}_{\theta})_+)$, and $\psi_i\in C_0^{\infty}(\mathfrak{C}_i^{\theta})$;
\item  each $\psi_i$ is positively homogeneous of degree zero;
\item  $|D^{\nu}\psi_i|\lesssim_{|\nu|} (K_{\xi_i})^{-(d+2q+7-1/(d-1))|\nu|}$ on $\mathscr{C}_1$.
\end{enumerate}

From the family $\{\mathfrak{C}_i^{\theta}\}_{i=1}^{\infty}$ we can
find a subfamily $\{\mathfrak{C}_i^{\theta}\}_{i\in \mathscr{A}}$
which covers $D_1(\delta, \theta)$, where
$\mathscr{A}=\mathscr{A}(\delta)$ is an index set such that $i\in
\mathscr{A}$ if and only if $\mathfrak{C}_i^{\theta}$ intersects
$D_1(\delta, \theta)$. Since $r(\xi_i)\gtrsim
\delta^{d+2q+7-1/(d-1)}$ for any $i\in \mathscr{A}$, a size estimate
gives that $\#\mathscr{A}\lesssim \delta^{-(d+2q+7-1/(d-1))(d-1)}$.
Define
\begin{equation}
S_{1, M}^*=\sum_{i\in \mathscr{A}}S_{2, i},\label{formula1}
\end{equation}
where
\begin{equation*}
S_{2, i}=\sum_{k\in \mathbb{Z}^d} U_i^{\theta}(k) e(tH_{\theta}(k))
\end{equation*}
and
\begin{equation*}
U_i^{\theta}(k)=\psi_i(M^{-1}k)\varphi(M^{-1}k)|k|^{-(d+1)/2}(K_{k}^{\theta})^{-1/2}\widehat{\rho}(\varepsilon
k).
\end{equation*}

Instead of $S_{1, M}$ we will estimate $S_{1, M}^*$. It turns out
that the error
\begin{equation}
R_{2, M}=S_{1, M}-S_{1, M}^* \label{R2M}
\end{equation}
is relatively small and this will be clear at the end of this proof.

To estimate $S_{1, M}^*$ we will estimate $S_{2, i}$ for any fixed
$i\in \mathscr{A}$. By Lemma \ref{non-vanishinglemma} and the
homogeneity of $H_{\theta}$, there exist $d$ linearly independent
vectors $v_j=v_j(\theta\xi_i)\in \mathbb{Z}^d$ ($j=1, \ldots, d$)
such that if $\eta \in \mathop{\cup}_{1/4\leqslant l\leqslant 4} l
B(\theta\xi_i, 2r(\xi_i))$ then
\begin{equation}
|h_q^{\theta}(\eta, v_1,\ldots, v_d)|\gtrsim
(K_{\xi_i})^{(-d-2q-5+1/(d-1))d(q+2)-3d+5-1/(d-1)}.
\label{determinant}
\end{equation}
Let $L=[\mathbb{Z}^d:\mathbb{Z}v_1\oplus\mathbb{Z}v_2\oplus
\cdots\oplus \mathbb{Z}v_d]$ be the index of the lattice spanned by
$v_1, \ldots, v_d$ in the lattice $\mathbb{Z}^d$. Then there exist
vectors $b_l\in \mathbb{Z}^d$ ($l=1, \ldots, L$) such that
\begin{equation*}
\mathbb{Z}^d=\biguplus_{l=1}^{L}(\mathbb{Z}v_1+\cdots+\mathbb{Z}v_d+b_l).
\end{equation*}
It follows from Lemma \ref{non-vanishinglemma} that
\begin{equation*}
L=|\det(v_1, \ldots, v_d)| \asymp (K_{\xi_i})^{d(-d-2q-5+1/(d-1))}
\end{equation*}
and
\begin{equation*}
|b_l|\lesssim (K_{\xi_i})^{-d-2q-8+1/(d-1)}.
\end{equation*}

Let $N>d/2$ be an arbitrarily fixed natural number. We have
\begin{align}
S_{2, i}&=\sum_{l=1}^{L}\sum_{m\in \mathbb{Z}^d}U_i^{\theta}(\sum_{j=1}^{d} m_jv_j+b_l) e(tH_{\theta}(\sum_{j=1}^{d} m_jv_j+b_l))\nonumber\\
        &=(K_{\xi_i})^{-1/2}M^{-(d+1)/2}(1+M\varepsilon)^{-N}\sum_{l=1}^{L}S(T, M_*; G_l, F_l),\label{formula2}
\end{align}
where $T=tM$, $M_*=(K_{\xi_i})^{d+2q+2-1/(d-1)}M$,
\begin{equation*}
G_l(y)=(K_{\xi_i})^{1/2}M^{(d+1)/2}(1+M\varepsilon)^{N}
U_i^{\theta}(M_*Vy+b_l),
\end{equation*}
and
\begin{equation*}
F_l(y)=H_{\theta}(M^{-1}(M_*Vy+b_l)),
\end{equation*}
where $V=(v_1,\ldots, v_d)$.

We consider the function $F_l$ restricted to the convex domain
\begin{equation*}
\Omega_l=\{y\in \mathbb{R}^d : M^{-1}(M_*Vy+b_l)\in
\mathop{\cup}\limits_{1/4\leqslant l\leqslant 4} l B(\theta\xi_i,
2r(\xi_i)) \}.
\end{equation*}
The support of $G_l$ satisfies
\begin{equation*}
\supp(G_l)\subset \{y\in \mathbb{R}^d : M^{-1}(M_*Vy+b_l)\in
\overline{\mathscr{C}_1\cap \mathfrak{C}_i^{\theta}} \}\subset
\Omega_l.
\end{equation*}

We apply to $S(T, M_*; G_l, F_l)$ Proposition \ref{exp-sum} with
$G=G_l$, $F=F_l$, $K=K_{\xi_i}$, and $\Omega=\Omega_l$. And we only
compute below the case $d\geqslant 5$ with $q=1$ (while the case
$3\leqslant d\leqslant 4$ with $q=2$ can be handled in the same
way).

Since $1 \gtrsim K_{\xi_i}\gtrsim \delta$ if $i\in \mathscr{A}$,
there exist positive constants $C_2$ and $C_3$ such that the
assumptions of Proposition \ref{exp-sum} are satisfied if $M\in
\mathscr{I}_1$ where $\mathscr{I}_1$ is an interval defined by
\begin{equation*}
\mathscr{I}_1=[C_3\delta^{-37d-41+5/(d-1)},
C_2\delta^{(14d^4+66d^3+61d^2-144d-12)/(2(d-2)(d-1)d)} t^{d/(d-2)}].
\end{equation*}
This follows from Lemma \ref{non-vanishinglemma},
\eqref{determinant} and the following facts: if
\begin{equation*}
(K_{\xi_i})^{-d-2q-8+1/(d-1)}\lesssim M
\end{equation*}
then $\Omega_l\subset c_0 B(0, 1)$ for a constant $c_0$ (depending
only on $q$, $\mathcal{B}$);
\begin{equation*}
\dist \left(\big(\mathop{\cup}\limits_{1/4\leqslant l\leqslant 4} l
B(\theta\xi_i, 2r(\xi_i)) \big)^c \, , \,
\overline{\mathscr{C}_1\cap \mathfrak{C}_i^{\theta}}\right)\geqslant
c_2 (K_{\xi_i})^{d+2q+7-1/(d-1)}/8;
\end{equation*}
and
\begin{equation*}
D^{\nu}U_i^{\theta} \lesssim
(K_{\xi_i})^{-(d+2q+7-1/(d-1))|\nu|-1/2}M^{-|\nu|-(d+1)/2}(1+M\varepsilon)^{-N}.
\end{equation*}

Thus by Proposition \ref{exp-sum} we get
\begin{equation}
S(T, M_*; G_l, F_l)\lesssim
(K_{\xi_i})^{d^2-13d/2-6+9/(d+2)}t^{d/(2d+4)}M^{d-d/(d+2)}.\label{kk}
\end{equation}
Then by using \eqref{formula1}, \eqref{formula2}, \eqref{kk},
$K_{\xi_i}\gtrsim \delta$, and bounds of $\#\mathscr{A}$ and $L$, we
get
\begin{equation}
\begin{split}
S_{1, M}^* &\lesssim \delta^{-d^2-43d/2+9/2+1/(d-1)+9/(d+2)}\cdot \\
&\qquad t^{d/(2d+4)}M^{(d-1)/2-d/(d+2)}(1+M\varepsilon)^{-N}.
\end{split}\label{S1M}
\end{equation}

Now we can estimate $S_1$. By \eqref{S1formula} and \eqref{R2M} we
get
\begin{equation}
S_1=t^{-(d+1)/2}\left( \sum_{l_0\in \{n\in \mathbb{N}_0 : 2^{n}\in
\mathscr{I}_1\}} S_{1, 2^{l_0}}^*+R_2+R_3\right),\label{boundS1}
\end{equation}
where
\begin{equation*}
R_2=\sum_{l_0\in \{n\in \mathbb{N}_0 : 2^{n}\in \mathscr{I}_1\}}
R_{2, 2^{l_0}}\quad \textrm{and} \quad R_3=\sum_{l_0\in \{n\in
\mathbb{N}_0 : 2^{n}\in \mathscr{I}_1^{c}\}}S_{1, 2^{l_0}}.
\end{equation*}

Using the bound \eqref{S1M} of $S_{1, M}^*$ we get
\begin{equation}
\begin{split}
&\sum_{l_0\in \{n\in \mathbb{N}_0 : 2^{n}\in \mathscr{I}_1\}} S_{1,
2^{l_0}}^*\lesssim \\
&\qquad\delta^{-d^2-43d/2+9/2+1/(d-1)+9/(d+2)}t^{d/(2d+4)}\varepsilon^{-(d-1)/2+d/(d+2)}.
\end{split}\label{qq}
\end{equation}

Hence Claim \ref{sumI-finitetype} follows from \eqref{decomp-sumI},
\eqref{R1}, \eqref{boundS1}, \eqref{qq}, sizes of $\delta$ and
$\varepsilon$, and the following estimates of $R_2$ (\footnote{The
method used here to estimate $R_2$ is different from what we used in
\cite{guo2}. More precisely, we estimate the integral of $R_2$
rather than $R_2$ itself.  Here we need the size estimate of
$|D_2(\delta, 0)|$, and this is the only place in the estimate of
Sum I where the finite type condition is used.}) and $R_3$:
\begin{equation}
\int_{SO(d)} \sup_{2^{j-1}\leqslant t\leqslant 2^{j+2}}
t^{2d/(d+1)+\zeta_d+\sigma(d, \omega)}t^{-(d+1)/2}|R_2| \,d\theta
\lesssim 1;\label{R2-remainder}
\end{equation}
\begin{equation} \sup_{2^{j-1}\leqslant t\leqslant 2^{j+2}}
t^{2d/(d+1)+\zeta_d+\sigma(d, \omega)}t^{-(d+1)/2}|R_3| \lesssim
1.\label{R3-remainder}
\end{equation}

The \eqref{R2-remainder} follows from Lemma \ref{s2-lemma2} and
\ref{s2-lemma3} if we notice that
\begin{equation*}
|R_{2, M}|\lesssim \delta^{-1/2}\sum_{k\in D_2(\delta,
\theta)}|\varphi(M^{-1}k)| |k|^{-(d+1)/2}|\widehat{\rho}(\varepsilon
k)|;
\end{equation*}
and the \eqref{R3-remainder} is true since we have, by trivial
estimates,
\begin{equation*}
|R_3|\lesssim \delta^{-5d^2-7d/2+9}+
\delta^{-1/2}t^{d(d-1)/(2(d-2))-N_1d/(d-2)}\varepsilon^{-N_1},
\end{equation*}
for any integer $N_1>(d-1)/2$.

\end{proof}

Just like that Lemma \ref{lemma5:1} implies Theorem
\ref{Rd-finitetype}, the following lemma implies Theorem
\ref{Rd-varchenkoplus}.  Its proof is essentially the same as above,
however, we now use, in the estimate of Sum II, H\"{o}lder's
inequality and Varchenko's \cite[Theorem 8]{varchenko} instead of
Svensson's estimate of $\Phi(\xi)$ (as we mentioned in \S
\ref{introduction}) and $\delta=t^{-\beta(d, \infty)}$ with
$\beta(d, \infty)=\lim_{\omega\rightarrow \infty}\beta(d,\omega)$.

\begin{lemma}
Let $\mathcal{B}\subset \mathbb{R}^d$ ($d\geqslant 3$) be a compact
convex domain and $\rho\in C_0^{\infty}(\mathbb{R}^d)$ such that
$\int_{\mathbb{R}^d}\rho(y)\,dy=1$. If the boundary is a smooth
hypersurface then
\begin{equation*}
\int_{SO(d)} t^{2d/(d+1)+\zeta_d}\big|\sum_{k\in \mathbb{Z}^d_*}
\widehat{\chi}_{\mathcal{B}_\theta}(tk)\widehat{\rho}(\varepsilon
k)\big| \,d\theta \lesssim 1,
\end{equation*}
where $d\theta$ is the normalized Haar measure on $SO(d)$, $\zeta_d$
is given by \eqref{zeta_d}, and
\begin{equation*}
\varepsilon=t^{-\alpha(d, \infty)}
\end{equation*}
with
\begin{equation*}
\alpha(d,\infty)=\left\{ \begin{array}{llll}
1-\frac{12d^4+224d^3-8d^2-406d+164}{6d^5+118d^4+109d^3-210d^2-119d+82}  & \textrm{for $3\leqslant d\leqslant 4$},\\
1-\frac{4d^4+90d^3+61d^2-227d+60}{2d^5+47d^4+76d^3-85d^2-82d+30}  &
\textrm{for $d\geqslant 5$}.
\end{array}\right.
\end{equation*}
\end{lemma}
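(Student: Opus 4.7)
The plan is to adapt the proof of Lemma \ref{lemma5:1} almost verbatim, with two substitutions that account for the loss of the finite-type hypothesis. Fix $t\geqslant 2$ and split
\[
\sum_{k\in \mathbb{Z}^d_*} \widehat{\chi}_{\mathcal{B}_\theta}(tk)\widehat{\rho}(\varepsilon k) = \operatorname{Sum\ I}(t,\varepsilon,\delta,\theta) + \operatorname{Sum\ II}(t,\varepsilon,\delta,\theta)
\]
using the same regions $D_1(\delta,\theta)$ and $D_2(\delta,\theta)$ as in \S \ref{mainterm}, but with the threshold set to $\delta = t^{-\beta(d,\infty)}$, where $\beta(d,\infty) = \lim_{\omega\to\infty}\beta(d,\omega)$. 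The goal is to show that both pieces, after integration in $\theta$, are controlled by $t^{-2d/(d+1)-\zeta_d}$.

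For Sum II, reproduce the first lines of the proof of Claim \ref{sumII-finitetype}; the bound reduces to the scalar quantity $\int_{S^{d-1}\cap D_2(\delta,0)} \Phi(\xi)\, d\xi$. Since the finite-type hypothesis is absent, Svensson's $L^p$-bound on $\Phi$ (for some $p>2$) is no longer available, and one has to settle for Varchenko's $L^2$-statement. Applying Cauchy--Schwarz together with Lemma \ref{s2-lemma2} and the trivial bound $|\{x\in\partial\mathcal{B} : K(x)<\delta\}|\leqslant |\partial\mathcal{B}|$ yields
\[
\int_{S^{d-1}\cap D_2(\delta,0)} \Phi(\xi)\, d\xi \leqslant \|\Phi\|_{L^2(S^{d-1})} \cdot |S^{d-1}\cap D_2(\delta,0)|^{1/2} \lesssim \delta^{1/2}.
\]
Feeding this estimate into the remainder of the Claim \ref{sumII-finitetype} argument, one obtains a Sum II contribution of the form $t^{2d/(d+1)-(d+1)/2+\zeta_d}\,\delta^{1/2}\,\varepsilon^{-(d-1)/2}$, which is $\lesssim t^{-2d/(d+1)-\zeta_d}$ for the stated choice of $\alpha(d,\infty)$ and $\beta(d,\infty)$.

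For Sum I, the proof is an essentially verbatim transcription of the proof of Claim \ref{sumI-finitetype}. Corollary \ref{s4-asym} extracts the oscillatory principal term modulo an error $R_1$; a dyadic decomposition in $|k|$ combined with the partition of unity on the curvature-adapted cones $\mathfrak{C}_i^\theta$ from Lemma \ref{non-vanishinglemma} localizes the sum; and Proposition \ref{exp-sum} bounds each localized exponential sum $S(T, M_*; G_l, F_l)$, producing the same estimate \eqref{S1M} for $S_{1, M}^{*}$. The only place where finite type was used previously is in the control of the discretization error $R_2 = S_{1,M} - S_{1,M}^{*}$, where Lemma \ref{s2-lemma3} gave $|D_2(\delta,0)|\lesssim \delta^{1/(d-1)(\omega-2)}$. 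Without finite type we instead invoke Lemma \ref{s2-lemma2} together with the trivial bound on $|\{x:K(x)<\delta\}|$ to conclude $|D_2(\delta,0)\cap S^{d-1}|\lesssim \delta$; this is weaker but still sufficient under the larger choice $\delta = t^{-\beta(d,\infty)}$.

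The main obstacle is not conceptual but purely one of bookkeeping: one must verify that the limiting exponents $\alpha(d,\infty)$ and $\beta(d,\infty)$ simultaneously balance the Sum II bound, the Sum I main term, and each of the auxiliary errors $R_1$, $R_2$, $R_3$, so that everything consolidates into $t^{-2d/(d+1)-\zeta_d}$. This balancing is precisely the $\omega\to\infty$ limit of the corresponding computation performed for Lemma \ref{lemma5:1}, which justifies the explicit formula for $\alpha(d,\infty)$ given in the statement. Since no logarithmic loss arises from supremum-over-$t$ (the lemma asserts only a pointwise-in-$t$ bound), no extra factor $\log^b(t)$ is incurred, and the proof terminates.
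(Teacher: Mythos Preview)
Your proposal is correct and follows essentially the same approach as the paper, which likewise instructs one to repeat the proof of Lemma~\ref{lemma5:1} with $\delta=t^{-\beta(d,\infty)}$ and, in the Sum~II estimate, to replace Svensson's pointwise bound by H\"older's inequality combined with Varchenko's $L^2$ result. One small point of care: Varchenko's theorem gives $r^{(d+1)/2}|\widehat{\chi}_{\mathcal{B}}(r\xi)|\in L^2(S^{d-1})$ uniformly in $r$, not $\Phi\in L^2$, so in your Cauchy--Schwarz step you should use the fixed-$r$ function rather than $\Phi$---this is exactly why the absence of a supremum over $t$ in the present lemma matters, as you yourself note at the end.
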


\begin{remark}
Note that $\alpha(d,\infty)=\lim_{\omega\rightarrow
\infty}\alpha(d,\omega)$.
\end{remark}

%%%%%%%%%%%%%%%%%%%%%%%%%%%%%%%%%%%%%%%%%%%%%%%%%%%%%%%%%%%%%%%%%%%%%%%%%%%%%%%%%%%%%%%%%%%%%

\section{The $\mathbb{R}^2$ case}\label{R2-section}

To prove Theorem \ref{R2-general} and \ref{R2-finitetype} the key
step is to prove the following $\mathbb{R}^2$ analogues of Lemma
\ref{lemma5:1}.

\begin{lemma}\label{lemma6:1}
Let $\zeta_2=1/2859$, $\mathcal{B}\subset \mathbb{R}^2$ be a compact
convex domain with a smooth boundary, and $\rho\in
C_0^{\infty}(\mathbb{R}^2)$ such that
$\int_{\mathbb{R}^2}\rho(y)\,dy=1$. Then, for $j\in \mathbb{N}$, we
have
\begin{equation*}
\int_{SO(2)} \sup_{2^{j-1}\leqslant t\leqslant 2^{j+2}}
t^{4/3+\zeta_2}\big|\sum_{k\in \mathbb{Z}^2_*}
\widehat{\chi}_{\mathcal{B}_\theta}(tk)\widehat{\rho}(\varepsilon(j,
\infty) k)\big| \,d\theta \lesssim 1,
\end{equation*}
where $d\theta$ is the normalized Haar measure on $SO(2)$ and
$\varepsilon(j, \infty)=2^{-318j/953}$. Furthermore, if the boundary
is of finite type $\omega$ then
\begin{equation*}
\int_{SO(2)} \sup_{2^{j-1}\leqslant t\leqslant 2^{j+2}}
t^{4/3+\zeta_2+\sigma(2, \omega)}\big|\sum_{k\in \mathbb{Z}^2_*}
\widehat{\chi}_{\mathcal{B}_\theta}(tk)\widehat{\rho}(\varepsilon(j,
\omega) k)\big| \,d\theta \lesssim 1,
\end{equation*}
where $\varepsilon(j, \omega)=2^{-j\alpha(2, \omega)}$,
\begin{equation*}
\alpha(2, \omega)=\frac{318\omega-616}{953\omega-1848} \quad
\textrm{and} \quad \sigma(2,
\omega)=\frac{616}{953(953\omega-1848)}.
\end{equation*}
\end{lemma}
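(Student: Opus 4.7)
The plan is to adapt the argument of Lemma~\ref{lemma5:1} to dimension two, replacing Lemma~\ref{non-vanishinglemma} by the sharper planar version Lemma~\ref{r2-keylemma}, and substituting the planar $L^{2,\infty}$ estimate on $\Phi$ from \cite{brandolini} for Svensson's $L^p$ bound whenever no curvature hypothesis is imposed. For $t \in [2^{j-1}, 2^{j+2}]$ and a splitting parameter $\delta$ to be chosen at the end, we decompose
\[
\sum_{k \in \mathbb{Z}^2_*}\widehat{\chi}_{\mathcal{B}_\theta}(tk)\widehat{\rho}(\varepsilon k) = \operatorname{Sum\ I}(t,\varepsilon,\delta,\theta) + \operatorname{Sum\ II}(t,\varepsilon,\delta,\theta),
\]
exactly as in \S\ref{mainterm}, with $D_2(\delta,\theta)$ recording those $k$ whose direction lies in $\pm \vec{n}(\{K < \delta\})$ and $D_1(\delta,\theta)$ its complement.

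For Sum II in the finite type case, I would reuse the argument of Claim~\ref{sumII-finitetype} verbatim: the pointwise bound $\Phi(\xi) \lesssim (K_\xi)^{-1/2} + (K_{-\xi})^{-1/2}$ (which holds under finite type assumption by Svensson's estimate), the change of variables $d\sigma = K\,dA$, and Lemma~\ref{s2-lemma3} together yield the needed control. For the general case (Theorem~\ref{R2-general}), where the pointwise bound is unavailable, I would replace it with
\[
\int_{S^1 \cap D_2(\delta,0)}\Phi(\xi)\,d\xi \lesssim \|\Phi\|_{L^{2,\infty}(S^1)}\,|S^1 \cap D_2(\delta,0)|^{1/2}\,\log^{1/2}\!\bigl(e/|S^1 \cap D_2(\delta,0)|\bigr),
\]
bounding the size of $S^1 \cap D_2(\delta,0)$ via Lemma~\ref{s2-lemma2}. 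The logarithmic factor thereby produced is the source of the $\log^b(t)$ weight appearing in both parts of the statement.

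For Sum I, I would insert the asymptotic expansion of Corollary~\ref{s4-asym} (specialized to $d = 2$) to reduce to estimating $S_1$ and $\widetilde{S}_1$, and then run the dyadic/Vitali/partition-of-unity machinery from the proof of Claim~\ref{sumI-finitetype}. The crucial input at this stage is Lemma~\ref{r2-keylemma}: for each cone center $\xi_i \in \vec{n}((\partial\mathcal{B}_\theta)_+)$ it supplies two orthogonal integer vectors $v_1, v_2$ of common length $\asymp (K_{\xi_i})^{-2q-2}$ on which $h_q^\theta$ is bounded below by $(K_{\xi_i})^{-4q^2-12q-10}$ throughout a ball of radius $\asymp (K_{\xi_i})^{2q+4}$. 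These size and orthogonality properties allow the localized sums $S(T, M_*; G_l, F_l)$ to be controlled by Proposition~\ref{exp-sum} from Appendix~\ref{app2} (the $A^q B$-process estimate), and the lattice index $L = |\det V| \asymp (K_{\xi_i})^{-4q-4}$ together with the bound $\|V^{-1}\| \lesssim (K_{\xi_i})^{2q+2}$ governs the number and shape of the translated sublattice sums.

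The main obstacle will be the numerical optimization: the specific exponents $\zeta_2 = 1/2859$, $\alpha(2,\omega) = (318\omega-616)/(953\omega-1848)$, and $\sigma(2,\omega) = 616/(953(953\omega - 1848))$ arise from balancing the Sum II loss (of schematic form $\delta^{c}\varepsilon^{-1/2}$, possibly with a logarithmic factor) against the Sum I bound (which, after inserting \eqref{s9-lemma4:4-1}, \eqref{s9-bbb}, and the conclusion of Proposition~\ref{exp-sum}, has schematic form $\delta^{-c'}t^{e_1}\varepsilon^{e_2}$). The improvement over \cite[Theorems 1.1, 1.2]{guo2} is traceable to the sharper exponent $-2q-2$ in \eqref{s9-lemma4:4-1} (versus the weaker exponent $-2q-1$ used previously), itself a consequence of the improved bound $g_{11}(\xi,v_1,v_2) \lesssim (K_{\xi}^{\theta})^{-3}$ noted just before Lemma~\ref{r2-keylemma}. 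Once the optimization is carried out with $q = 1$ and the remainder terms $R_1$, $R_2$, $R_3$ are treated exactly as in the proof of Claim~\ref{sumI-finitetype} (the size of $D_2(\delta, 0)$, needed to bound $R_2$ after integration over $SO(2)$, being supplied by Lemma~\ref{s2-lemma2} in the general case and by Lemma~\ref{s2-lemma3} in the finite type case), the two estimates of the lemma follow.
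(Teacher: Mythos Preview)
Your overall strategy matches the paper's: the same Sum~I/Sum~II split, Lemma~\ref{r2-keylemma} replacing Lemma~\ref{non-vanishinglemma} in the Sum~I machinery, and the $L^{2,\infty}$ bound on $\Phi$ from \cite{brandolini} standing in for Svensson's pointwise estimate in the general Sum~II case.

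Three details are off. First, the H\"older-type inequality for $L^{2,\infty}$ on a set $E$ reads $\int_E \Phi \lesssim \|\Phi\|_{L^{2,\infty}}|E|^{1/2}$ with \emph{no} logarithmic factor; combined with Lemma~\ref{s2-lemma2} (which gives $|S^1\cap D_2(\delta,0)|\lesssim \delta$) this yields the clean bound $(2^j)^{-1/6+\zeta_2}\delta^{1/2}\varepsilon^{-1/2}$ that the paper records. Note that Lemma~\ref{lemma6:1} itself carries no $\log^b$ weight; that factor appears only in Theorems~\ref{R2-general} and~\ref{R2-finitetype}, and it arises from summing over dyadic $j$ in the standard passage from the lemma to the theorems, not from the $L^{2,\infty}$ integration. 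Second, Proposition~\ref{exp-sum} is stated only for $d\geqslant 3$; in the planar case the relevant $A^qB$ estimate is its two-dimensional analogue \cite[Proposition~5.2]{guo2}. Third, and most consequential for the numerology, $q=1$ is not the right choice: the paper's Sum~I bound is $\delta^{-14}t^{-3/2+1/22}\varepsilon^{-7/22}$, and for $d=2$ the $t$-exponent produced by \eqref{kkk} is $d/(2Q+2(Q-1)d)=1/(3Q-2)$, which forces $Q=8$, i.e.\ $q=3$ (consistent with the trend $q=2$ for $3\leqslant d\leqslant 4$ and $q=1$ only for $d\geqslant 5$). With $q=3$ and the balancing $\delta=2^{-j\beta(2,\omega)}$, $\varepsilon=2^{-j\alpha(2,\omega)}$ the constants $953$, $318$, and $\zeta_2=1/2859$ fall out.
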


Since the proof is essentially the same as the proof of Lemma
\ref{lemma5:1} we will not provide every detail but only a few key
estimates (see also the proof of Guo's \cite[Lemma 6.1]{guo2}).

As before we first decompose $\sum_{k\in \mathbb{Z}^2_*}
\widehat{\chi}_{\mathcal{B}_\theta}(tk)\widehat{\rho}(\varepsilon
k)$ into two parts: Sum I and II.

By Lemma \ref{s2-lemma2} and the fact that $\Phi\in L^{2,
\infty}(S^1)$ (\cite[Theorem 0.3]{brandolini}) we get
\begin{equation*}
\int_{SO(2)} \sup_{2^{j-1}\leqslant t\leqslant 2^{j+2}}
t^{4/3+\zeta_2} \big| \operatorname{Sum\ II}\big| \,d\theta \lesssim
(2^{j})^{4/3-3/2+\zeta_2} \delta^{1/2}\varepsilon^{-1/2}\lesssim 1,
\end{equation*}
where $\varepsilon=\varepsilon(j, \infty)$ and $\delta=\delta(j,
\infty)=2^{-j/953}$.

If $\partial \mathcal{B}$ is of finite type $\omega$, then
\begin{align*}
\int_{SO(2)} &\sup_{2^{j-1}\leqslant t\leqslant 2^{j+2}}
t^{4/3+\zeta_2+\sigma(2, \omega)}\big| \operatorname{Sum\ II}\big|
\,d\theta \\
&\lesssim (2^{j})^{4/3-3/2+\zeta_2+\sigma(2, \omega)}
\delta^{1/2+1/(\omega-2)}\varepsilon^{-1/2}\ \lesssim 1,
\end{align*}
where $\varepsilon=\varepsilon(j, \omega)$ and $\delta=\delta(j,
\omega)=2^{-j\beta(2, \omega)}$ with
\begin{equation*}
\beta(2, \omega)=\frac{\omega-2}{953\omega-1848}.
\end{equation*}

For Sum I we now use Lemma \ref{r2-keylemma} and get
\begin{equation*}
\operatorname{Sum\ I}\lesssim
\delta^{-14}t^{-3/2+1/22}\varepsilon^{-7/22}+t^{-3/2}|R_2|.
\end{equation*}
Combining this with the above two estimates of Sum II yields Lemma
\ref{lemma6:1}.

%%%%%%%%%%%%%%%%%%%%%%%%%%%%%%%%%%%%%%%%%%%%%%%%%%%%%%%%%%%%%%%%%%%

%%%%%%%%%%%%%%%%%%%%%%%%%%%%%%%%%%%%%%%%%%%%%%%%%%%%%%%%%%%%%%%%%%%

\appendix

\section{Several Lemmas}\label{app1}

Here is a quantitative version of the inverse function theorem (see
the appendix in Guo~\cite{guo}).

\begin{lemma}\label{app:lemma:1}
Suppose $f$ is a $C^{(k)}$ ($k\geqslant 2$) mapping from an open set
$\Omega\subset\mathbb{R}^d$ into $\mathbb{R}^d$ and $b=f(a)$ for
some $a\in \Omega$. Assume $|\det (\nabla f(a))|$ $\geqslant$ $c$
and for any $x\in\Omega$,
\begin{equation*}
|D^{\nu} f_i(x)|\leqslant C \quad \quad \textrm{for $|\nu|\leqslant
2$, $1\leqslant i\leqslant d$}.
\end{equation*}
If $r_0\leqslant \sup\{r>0: B(a, r)\subset \Omega\}$, then $f$ is
bijective from $B(a, r_1)$ to an open set containing $B(b, r_2)$
where
\begin{equation*}
r_1=\min\{\frac{c}{2d^{2} d! C^d}, r_0\},  \qquad
r_2=\frac{c}{4d!C^{d-1}}r_1.
\end{equation*}
The inverse mapping $f^{-1}$ is in $C^{(k)}(V)$.
\end{lemma}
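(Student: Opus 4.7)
The plan is to run a quantitative version of the standard Newton-iteration proof of the inverse function theorem, keeping every constant explicit in terms of $c$, $C$ and $d$. Write $A=\nabla f(a)$. The first ingredient is a bound on $\|A^{-1}\|$: by Cramer's rule each entry of $A^{-1}$ equals a signed $(d-1)\times(d-1)$ minor of $A$ divided by $\det A$, and expanding such a minor as a sum of at most $(d-1)!$ signed products of $d-1$ entries (each of modulus $\leqslant C$) gives $|(A^{-1})_{ij}|\leqslant (d-1)!\,C^{d-1}/c$, hence $\|A^{-1}\|\leqslant d!\,C^{d-1}/c$.

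The second ingredient is a Lipschitz estimate for $\nabla f$ near $a$. Applying the mean value theorem to each first partial $\partial_j f_i$ and invoking $|\partial_k\partial_j f_i|\leqslant C$ yields $|\partial_j f_i(\xi)-\partial_j f_i(a)|\leqslant dC|\xi-a|$, and bounding the operator norm by $d$ times the largest entry gives $\|\nabla f(\xi)-A\|\leqslant d^2 C|\xi-a|$ for every $\xi\in B(a,r_1)\subset\Omega$. With these two bounds in hand I introduce the Newton map $g_y(x)=x-A^{-1}(f(x)-y)$, whose fixed points are precisely the solutions of $f(x)=y$. The identity
\begin{equation*}
g_y(x_1)-g_y(x_2)=A^{-1}\!\int_0^1\!\bigl(A-\nabla f(x_2+s(x_1-x_2))\bigr)\,ds\,(x_1-x_2)
\end{equation*}
combined with the two bounds gives $|g_y(x_1)-g_y(x_2)|\leqslant (d!\,C^{d-1}/c)\cdot d^2 C r_1\cdot |x_1-x_2|$ for $x_1,x_2\in\overline{B(a,r_1)}$, and the choice $r_1=\min\{c/(2d^2\,d!\,C^d),r_0\}$ forces the prefactor to be $\leqslant 1/2$, so $g_y$ is a $1/2$-contraction on $\overline{B(a,r_1)}$.

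The third step invokes the Banach fixed-point theorem. For $y\in B(b,r_2)$ one has $|g_y(a)-a|=|A^{-1}(y-b)|\leqslant\|A^{-1}\|r_2\leqslant r_1/4$ by the definition $r_2=cr_1/(4\,d!\,C^{d-1})$; combined with the contraction bound this gives $|g_y(x)-a|\leqslant r_1/2+r_1/4<r_1$ for every $x\in\overline{B(a,r_1)}$, so $g_y$ maps this closed ball into itself and admits a unique fixed point in $B(a,r_1)$, furnishing a unique preimage for every $y\in B(b,r_2)$. Injectivity of $f$ on $B(a,r_1)$ is immediate from the same contraction inequality: if $f(x_1)=f(x_2)$ then $g_y(x_1)-g_y(x_2)=x_1-x_2$, which the bound $|g_y(x_1)-g_y(x_2)|\leqslant |x_1-x_2|/2$ accommodates only for $x_1=x_2$. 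Finally, $C^{(k)}$-regularity of $f^{-1}$ on the image follows from the usual bootstrap via $(f^{-1})'(y)=(\nabla f(f^{-1}(y)))^{-1}$, which gains one derivative of $f$ per differentiation.

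There is no conceptual obstacle here; the whole exercise is explicit-constant bookkeeping. The factor $2d^2\,d!\,C^d$ in $r_1$ is exactly the product of the Cramer bound $d!\,C^{d-1}/c$, the Lipschitz-of-gradient factor $d^2 C$ and the contraction factor $2$, while the factor $4\,d!\,C^{d-1}$ in $r_2$ comes from the same Cramer bound together with the $1/4$ slack needed to keep $\overline{B(a,r_1)}$ invariant under $g_y$. The only place where one must be careful is in passing from entrywise to operator-norm bounds on $\nabla f(\xi)-A$; using the crude estimate $\|M\|\leqslant d\max_{i,j}|M_{ij}|$ keeps the extra $d$-powers as stated, and any sharper choice (Frobenius, row-sum, etc.) merely changes the constant inside $r_1$ without affecting the structure of the argument.
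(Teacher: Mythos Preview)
Your argument is correct and is the standard contraction-mapping proof of the quantitative inverse function theorem; the constants you extract match the stated $r_1$ and $r_2$ exactly. Note only that the paper does not actually prove this lemma: it merely records the statement and refers the reader to the appendix of Guo~\cite{guo} for the details, so there is no ``paper's proof'' to compare against beyond the fact that the same Newton-iteration argument underlies that reference as well.
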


H\"ormander's \cite[Theorem 7.7.1]{hormander}  gives the following
estimate obtained by integration by parts.

\begin{lemma}\label{app:lemma:2}
Let $K\subset \mathbb{R}^d$ be a compact set, $X$ an open
neighborhood of $K$ and $k$ a nonnegative integer. If $u\in
C_0^k(K)$, real $f\in C^{k+1}(X)$, then
\begin{equation*}
|\int u(x) e^{i \lambda f(x)}\,dx|\leqslant C |K|\lambda^{-k}
\sum_{|\nu|\leqslant k} \sup |D^{\nu}u||\nabla f|^{|\nu|-2k}, \quad
\lambda>0.
\end{equation*}
Here $C$ is bounded when $f$ stays in a bounded set in $C^{k+1}(X)$.
\end{lemma}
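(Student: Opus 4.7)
The plan is the standard nonstationary-phase integration-by-parts argument. First I would work on the open subset of $X$ where $\nabla f\neq 0$; if $\nabla f$ vanishes somewhere on $\supp u$, then at least one of the terms on the right-hand side is infinite and the bound is trivial. On that set, I define the first-order differential operator
\[
L \;=\; \frac{1}{i\lambda}\sum_{j=1}^{d}\frac{\partial_j f}{|\nabla f|^{2}}\,\partial_j,
\]
which by direct computation satisfies $L\,e^{i\lambda f}=e^{i\lambda f}$. Its formal transpose is
\[
L^{t}v \;=\; -\frac{1}{i\lambda}\sum_{j=1}^{d}\partial_j\!\Big(\frac{\partial_j f}{|\nabla f|^{2}}\,v\Big),
\]
and since $u\in C_{0}^{k}(K)$, iterating the identity $L\,e^{i\lambda f}=e^{i\lambda f}$ and integrating by parts $k$ times yields
\[
\int u(x)\,e^{i\lambda f(x)}\,dx \;=\; \int (L^{t})^{k}u(x)\,e^{i\lambda f(x)}\,dx.
\]

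The technical heart of the proof is then to establish by induction on $k$ the representation
\[
(L^{t})^{k}u(x) \;=\; \lambda^{-k}\sum_{|\nu|\leqslant k}c_{k,\nu}(x)\,D^{\nu}u(x),
\]
where each $c_{k,\nu}$ is a rational function of the partial derivatives of $f$ of orders $\leqslant k+1$, with denominator a power of $|\nabla f|^{2}$, satisfying the pointwise bound
\[
|c_{k,\nu}(x)| \;\leqslant\; C\,|\nabla f(x)|^{|\nu|-2k},
\]
with $C$ depending only on $k$, $d$, and $\|f\|_{C^{k+1}(X)}$. The inductive step is a product-rule calculation: one extra application of $L^{t}$ either differentiates $u$ (increasing $|\nu|$ by one while multiplying by a factor $\partial_j f/|\nabla f|^{2}$ of size $|\nabla f|^{-1}$), or differentiates an existing coefficient (which drops one further power of $|\nabla f|$, in view of $\partial_j|\nabla f|^{2}=2\sum_\ell \partial_j\partial_\ell f\cdot\partial_\ell f = O(\|f\|_{C^{2}}\,|\nabla f|)$). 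Either branch preserves the transformation $|\nu|-2k\mapsto |\nu'|-2(k+1)$, and to close the induction cleanly I would carry along pointwise bounds for the $c_{k,\nu}$ and for their first partial derivatives simultaneously.

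Taking absolute values inside the integral, using $|e^{i\lambda f}|=1$ and $\supp u\subset K$, then yields
\[
\Big|\int u\,e^{i\lambda f}\,dx\Big| \;\leqslant\; C\,|K|\,\lambda^{-k}\sum_{|\nu|\leqslant k}\sup_{x\in K}\!\big(|D^{\nu}u(x)|\,|\nabla f(x)|^{|\nu|-2k}\big),
\]
which is the desired estimate; the dependence of $C$ only on $k$, $d$, and $\|f\|_{C^{k+1}(X)}$ delivers the boundedness assertion as $f$ ranges over bounded sets in $C^{k+1}(X)$. The main obstacle is purely the combinatorial bookkeeping of the induction, \emph{i.e.} phrasing the hypothesis strongly enough that it survives a single application of $L^{t}$; the genuine analytic content of the lemma is entirely concentrated in the identity $L\,e^{i\lambda f}=e^{i\lambda f}$.
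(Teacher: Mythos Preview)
Your argument is correct and is exactly the standard nonstationary-phase integration-by-parts proof; the paper does not supply its own proof but simply quotes the lemma as H\"ormander's \cite[Theorem~7.7.1]{hormander}, whose proof proceeds precisely via the operator $L$ and the inductive bound on $(L^{t})^{k}u$ that you outline. The only minor refinement worth noting is that, to close the induction, one actually needs bounds on derivatives of $c_{k,\nu}$ of all orders up to $k'-k$ (not just first order) when aiming for a final exponent $k'$, but this follows at once since each $c_{k,\nu}$ is a polynomial in $\partial^{\alpha}f$, $|\alpha|\leqslant k+1$, divided by a power of $|\nabla f|^{2}$.
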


The following lemmas are various results of the method of stationary
phase. The first one is H\"ormander's \cite[Lemma 7.7.3]{hormander}.
The second one is Sogge and Stein's \cite[Lemma 2]{soggestein}.

\begin{lemma}\label{app:lemma:hor}
Let $A$ be a symmetric non-degenerate matrix with $Im A\geqslant 0$.
Then we have for every integer $k>0$ and integer $s>d/2$
\begin{equation*}
\begin{split}
|\int u(x) e^{i \lambda\langle Ax, x\rangle/2} &\, dx-(\det(\lambda
A/2\pi i))^{-1/2}T_k(\lambda)|\\
&\leqslant C_k(\|A^{-1}\|/\lambda)^{d/2+k}\sum_{|\alpha|\leqslant
2k+s} \|D^{\alpha}u\|_{L^2}, \quad u\in \mathscr{S},
\end{split}
\end{equation*}
\begin{equation*}
T_k(\lambda)=\sum_{0}^{k-1}(2i\lambda)^{-j}\langle A^{-1}D,
D\rangle^j u(0)/j!.
\end{equation*}
\end{lemma}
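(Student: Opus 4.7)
The plan is Fourier-analytic: convert $I := \int u(x)e^{i\lambda\langle Ax,x\rangle/2}\,dx$ by Parseval into an integral of $\hat u$ against the exact Fourier transform of the complex Gaussian, then Taylor expand the resulting quadratic-in-$\xi$ phase and control the remainder by $L^2$ methods.

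\textbf{Step 1: the Gaussian Fourier formula.} First I would establish the identity
\begin{equation*}
\mathcal{F}\bigl(e^{i\lambda\langle A\cdot,\cdot\rangle/2}\bigr)(\xi) = (\det(\lambda A/(2\pi i)))^{-1/2}\, e^{-i\langle A^{-1}\xi,\xi\rangle/(2\lambda)}.
\end{equation*}
For real symmetric non-degenerate $A$ this reduces, after an orthogonal change of variables diagonalizing $A$, to a product of one-dimensional Fresnel integrals $\int e^{i\lambda\mu_j t^2/2 - it\eta_j}\,dt$, which are classical. For complex symmetric $A$ with $\mathrm{Im}(A)\geq 0$ and $A$ non-degenerate, both sides are holomorphic in $A$ on this domain and agree on the real slice, so the identity persists by analytic continuation, with the branch of the square root inherited from the positive-definite case.

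\textbf{Step 2: Parseval and Taylor expansion.} Parseval then gives
\begin{equation*}
I = (\det(\lambda A/(2\pi i)))^{-1/2}(2\pi)^{-d}\int \hat{u}(\xi)\, e^{-i\langle A^{-1}\xi,\xi\rangle/(2\lambda)}\,d\xi.
\end{equation*}
Next I would Taylor expand the exponential using the elementary bound $|e^{it}-\sum_{j=0}^{k-1}(it)^j/j!|\leq |t|^k/k!$, applied at $t = -\langle A^{-1}\xi,\xi\rangle/(2\lambda)$ and majorized by $|\langle A^{-1}\xi,\xi\rangle|\leq \|A^{-1}\||\xi|^2$. The $j$-th main term, after Fourier inversion, equals $(2i\lambda)^{-j}\langle A^{-1}D,D\rangle^j u(0)/j!$, and summing these reproduces $(\det(\lambda A/(2\pi i)))^{-1/2} T_k(\lambda)$ exactly.

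\textbf{Step 3: remainder estimate.} The leftover piece is bounded by
\begin{equation*}
C\,|\det(\lambda A/(2\pi i))|^{-1/2}\,(\|A^{-1}\|/\lambda)^{k}\int |\xi|^{2k}|\hat{u}(\xi)|\,d\xi.
\end{equation*}
The inequality $|\det A^{-1}|\leq \|A^{-1}\|^d$ gives $|\det(\lambda A/(2\pi i))|^{-1/2}\leq C_d(\|A^{-1}\|/\lambda)^{d/2}$. For the integral, Cauchy--Schwarz with the weight $(1+|\xi|^2)^{-s}$ (integrable for $s>d/2$) followed by Plancherel yields
\begin{equation*}
\int |\xi|^{2k}|\hat{u}(\xi)|\,d\xi \leq C_s\Bigl(\sum_{|\alpha|\leq 2k+s}\|D^{\alpha}u\|_{L^2}^2\Bigr)^{1/2}.
\end{equation*}
Combining the two prefactors produces the advertised bound $C_k(\|A^{-1}\|/\lambda)^{d/2+k}\sum_{|\alpha|\leq 2k+s}\|D^{\alpha}u\|_{L^2}$.

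The one genuinely subtle point will be pinning down the branch of $(\det(\lambda A/(2\pi i)))^{-1/2}$ on the half-space $\{\mathrm{Im}(A)\geq 0\}$ so that the identity in Step 1 holds unambiguously across the analytic continuation; the rest is routine bookkeeping of factors of $i$ and $2\pi$ together with standard Plancherel-type estimates.
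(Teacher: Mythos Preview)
Your proposal is correct and is precisely the classical argument: the paper does not supply its own proof but simply quotes this as H\"ormander's \cite[Lemma 7.7.3]{hormander}, and what you have outlined---Parseval against the exact Gaussian Fourier transform, Taylor expansion of $e^{-i\langle A^{-1}\xi,\xi\rangle/(2\lambda)}$, and a Cauchy--Schwarz/Plancherel bound on the remainder---is exactly H\"ormander's proof. The only cosmetic discrepancy is that your Step~3 produces $\bigl(\sum_{|\alpha|\le 2k+s}\|D^{\alpha}u\|_{L^2}^2\bigr)^{1/2}$ rather than $\sum_{|\alpha|\le 2k+s}\|D^{\alpha}u\|_{L^2}$, but the former is dominated by the latter, so the stated inequality follows.
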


\begin{lemma}\label{app:lemma:3}
Suppose $\phi$ and $\psi$ are smooth functions in $B(0,
\delta)\subset \mathbb{R}^d$ with $\phi$ real-valued. Assume that
$|(\partial/\partial x)^{\nu}\phi|\leqslant C_1$, $|\nu|\leqslant
d+2$ and $|(\partial/\partial x)^{\nu}\psi|\leqslant C_2
\delta^{-|\nu|}$, $|\nu|\leqslant d$. We also suppose that $(\nabla
\phi)(0)=0$, but $|\det \nabla^2\phi(0)|\geqslant \delta$. Then
there exists a positive constant $c_1$ (independent of $\delta$),
which is sufficiently small, so that if $\psi$ is supported in $B(0,
c_1\delta)$ we can assert that
\begin{equation*}
\big|\int_{\mathbb{R}^d} \psi e^{i\lambda \phi}\,dx \big|\leqslant
C\lambda^{-d/2}\delta^{-1/2}.
\end{equation*}
\end{lemma}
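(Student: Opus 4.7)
The plan is to reduce the integral to a pure non-degenerate quadratic phase via a quantitative Morse lemma adapted to the scale $\delta$, and then to estimate the resulting integral by iterating the one-dimensional stationary phase bound $d$ times. The key arithmetic identity $\prod_j(\lambda|\mu_j|)^{-1/2}=\lambda^{-d/2}|\det\nabla^2\phi(0)|^{-1/2}$, combined with $|\det\nabla^2\phi(0)|\geq\delta$, is what will produce the claimed exponent in $\delta$.

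\emph{Morse reduction.} After an orthogonal change of coordinates at the origin, I may assume $B:=\nabla^2\phi(0)=\operatorname{diag}(\mu_1,\ldots,\mu_d)$. The bound $|D^{\nu}\phi|\leq C_1$ for $|\nu|=2$ gives $|\mu_j|\leq C_1$, while $\prod_j|\mu_j|\geq\delta$ combined with this upper bound forces $\min_j|\mu_j|\geq\delta/C_1^{d-1}$, so $\|B^{-1}\|\lesssim\delta^{-1}$. Writing $\phi(x)-\phi(0)=\tfrac12\langle A(x)x,x\rangle$ with $A(x)=2\int_0^1(1-t)\nabla^2\phi(tx)\,dt$ and using the uniform bound on third derivatives of $\phi$, I get $\|A(x)-B\|\leq C|x|$, hence $\|B^{-1}(A(x)-B)\|\leq Cc_1$ on $B(0,c_1\delta)$. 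For $c_1$ sufficiently small a Banach fixed-point argument produces a $C^{\infty}$ diffeomorphism $y=y(x)$ from $B(0,c_1\delta)$ onto a comparable neighborhood of $0$, with $y(0)=0$, $\nabla y(0)=I$, Jacobian in $[1/2,3/2]$, and satisfying $\phi(x)=\phi(0)+\tfrac12\langle B\,y(x),y(x)\rangle$. Because each higher derivative of $y-x$ involves one factor of $\|B^{-1}\|\sim\delta^{-1}$ together with $|x|\lesssim\delta$, the chain rule shows that the transplanted amplitude $\Psi(y):=\psi(x(y))\,|\det\nabla x/\nabla y|$ is supported in a ball of radius $\lesssim\delta$ and obeys $|D^{\nu}\Psi|\lesssim\delta^{-|\nu|}$ for $|\nu|\leq d$.

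\emph{Iteration of 1D stationary phase.} Writing $I_0:=e^{-i\lambda\phi(0)}\int\psi\,e^{i\lambda\phi}\,dx=\int\Psi(y)\,\exp\!\bigl(i\lambda\textstyle\sum_j\mu_j y_j^2/2\bigr)\,dy$, I integrate out $y_1$ first, holding $y_2,\ldots,y_d$ fixed. After the rescaling $y_1=\delta z$, the inner integral becomes a one-dimensional oscillatory integral with pure quadratic phase $\lambda\mu_1\delta^2 z^2/2$ and a smooth amplitude, supported in $|z|\leq c$, whose derivatives through order $d$ are bounded by absolute constants. Combining the classical van der Corput bound $(\lambda|\mu_1|\delta^2)^{-1/2}$ with the trivial bound $O(1)$ gives $|\!\int\Psi\,e^{i\lambda\mu_1 y_1^2/2}\,dy_1|\lesssim\min\!\bigl((\lambda|\mu_1|)^{-1/2},\,\delta\bigr)$; moreover, differentiating under the integral in $y_2,\ldots,y_d$ preserves this structure with derivatives scaling as $\delta^{-|\nu|}$. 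Iterating $d$ times produces
\[
|I_0|\;\lesssim\;\prod_{j=1}^{d}\min\!\bigl((\lambda|\mu_j|)^{-1/2},\,\delta\bigr).
\]
A short case analysis, partitioning the indices $j$ according to whether $\lambda|\mu_j|\delta^2\geq 1$ or $<1$ and invoking the constraint $\prod_j|\mu_j|\geq\delta$, shows that this product is always bounded by $\lambda^{-d/2}\delta^{-1/2}$, which is the desired estimate.

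\emph{Main obstacle.} The delicate point is carrying out the quantitative Morse lemma on precisely the ball $B(0,c_1\delta)$ while maintaining derivative estimates on the change of variables $y(x)$ that are compatible with the prescribed $\delta^{-|\nu|}$ scaling of $\psi$. Since $\|B^{-1}\|$ is as large as $\delta^{-1}$, the natural scale at which the Morse chart exists is exactly $\delta$, and $c_1$ must be taken small enough (depending only on $d$ and $C_1$) for the contraction mapping to converge at that scale; this is precisely what the smallness hypothesis on $c_1$ in the statement provides. Once the Morse chart is in place with the correct derivative bounds, the iterated 1D stationary phase and the final combinatorial inequality are essentially mechanical.
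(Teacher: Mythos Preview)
The paper does not give its own proof of this lemma: it is quoted verbatim as Sogge and Stein's \cite[Lemma 2]{soggestein}, and the paper only uses it as a black box (indeed, in the proof of Theorem~\ref{asym-R^d} the paper explicitly points to ``the proof of Sogge and Stein's \cite[Lemma 2]{soggestein}'' as the source of the quantitative Morse lemma). Your strategy---a quantitative Morse chart valid on the ball $B(0,c_1\delta)$ because $\|(\nabla^2\phi(0))^{-1}\|\lesssim\delta^{-1}$, followed by stationary phase for a pure quadratic---is exactly the Sogge--Stein argument, so your proposal matches the proof the paper is citing.

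Two small remarks. First, your closing ``short case analysis'' is unnecessary: since $\min\bigl((\lambda|\mu_j|)^{-1/2},\delta\bigr)\leq(\lambda|\mu_j|)^{-1/2}$ for every $j$, one has directly
\[
\prod_{j=1}^{d}\min\!\bigl((\lambda|\mu_j|)^{-1/2},\delta\bigr)\;\leq\;\prod_{j=1}^{d}(\lambda|\mu_j|)^{-1/2}\;=\;\lambda^{-d/2}\,|\det\nabla^2\phi(0)|^{-1/2}\;\leq\;\lambda^{-d/2}\delta^{-1/2}.
\]
Second, after the Morse reduction Sogge and Stein estimate the resulting integral with pure quadratic phase in one stroke via the explicit $d$-dimensional Fresnel formula (this is essentially the paper's Lemma~\ref{app:lemma:hor}); your iterated one-variable van der Corput is a legitimate alternative that yields the same bound, though you should be slightly careful in the derivative bookkeeping, since pushing $d$ derivatives onto the Jacobian factor $|\det(\partial x/\partial y)|$ formally calls for $d+1$ derivatives of the Morse chart and hence $d+3$ derivatives of $\phi$, one more than assumed. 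Using the $d$-dimensional Fresnel formula instead of the iteration sidesteps this count cleanly.
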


\section{Estimate of Exponential Sums}\label{app2}

In this section we will prove a higher dimensional analogue of Guo's
\cite[Proposition 5.2]{guo2} by using the same method.

Let $M_*>1$ and $T>0$ be parameters. We consider $d$-dimensional
exponential sums of the form
\begin{equation*}
S(T, M_*; G, F)=\sum_{m\in\mathbb{Z}^d} G(m/M_*) e(TF(m/M_*)),
\end{equation*}
where $G:\mathbb{R}^d\rightarrow\mathbb{R}$ is smooth, compactly
supported, and bounded above by a constant, and $F:\Omega\subset
\mathbb{R}^d \rightarrow\mathbb{R}$ is smooth on an open convex
domain $\Omega$ such that
\begin{equation*}
\textrm{supp} (G)\subset \Omega \subset c_0 B(0, 1),
\end{equation*}
where $c_0>0$ is a fixed constant.

\begin{proposition}\label{exp-sum}
Let $d\geqslant 3$, $q\in \mathbb{N}$, $Q=2^q$, and $K<1$ be a
positive parameter. Assume that
\begin{equation}
\dist(\supp(G), \Omega^{c})\gtrsim K^{d+2q+13-1/(d-1)},
\label{bdy-dist}
\end{equation}
for all $\nu\in \mathbb{N}_0^d$ and $y\in \Omega$
\begin{equation}
D^{\nu}G(y)\lesssim K^{-(d+2q+13-1/(d-1))|\nu|}, \label{aaa}
\end{equation}
\begin{equation}
D^{\nu}F(y)\lesssim \bigg\{ \begin{array}{ll}
K^{-6|\nu|} & \textrm{if $0\leqslant |\nu|\leqslant 1$},\\
K^{3-8|\nu|} & \textrm{if $|\nu|\geqslant 2$},
\end{array}  \label{upperforf}
\end{equation}
and for $\mu=(1, 0, \ldots, 0, q-1)\in \mathbb{N}_0^d$
\begin{equation}
\big|\det( \nabla^2 D^{\mu}F(y) )\big|\gtrsim
K^{-3(q+3)d+5-1/(d-1)}. \label{lowerforf}
\end{equation}

If
\begin{equation}
M_*\geqslant K^{-4(5q+4)d-37+4/(d-1)} \label{res-M}
\end{equation}
and
\begin{equation}
T\geqslant K^{-I/(Q(d-1)d^2)}M_*^{q+2/Q-2/d} \label{restriction}
\end{equation}
with
\begin{equation*}
\begin{split}
I&=2(5q+4)(2Q-3)d^4+(-35+25q+40Q+2qQ)d^3+\\
&\quad (60+5q-Q-17qQ)d^2+(6-60Q-5qQ)d-6Q,
\end{split}
\end{equation*}
then
\begin{equation}
\begin{split}
S(T, M_*; G, F)\lesssim
&[K^{-(2(5q+4)d^3+(3q+19)d^2-(13q+24)d-6)/(d(d-1))}\cdot \\
&\quad TM_*^{2(Q-1)d+2Q-q-2}]^{d/(2Q+2(Q-1)d)}. \label{kkk}
\end{split}
\end{equation}

The constant implicit in \eqref{kkk} depends only on $d$, $q$,
$c_0$, and constants implicit in \eqref{bdy-dist}, \eqref{aaa},
\eqref{upperforf}, and \eqref{lowerforf}.
\end{proposition}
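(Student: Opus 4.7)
The proof uses an $A^qB$-process: $q$ applications of the Weyl--van der Corput differencing ($A$-process) followed by one Poisson summation plus stationary phase ($B$-process). The multi-index $\mu=(1,0,\ldots,0,q-1)$ in \eqref{lowerforf} dictates the directions of differencing: once along $e_1$ and $q-1$ times along $e_d$. I will apply the Weyl inequality in the $e_d$ direction $q-1$ times with shift parameters $H_1,\ldots,H_{q-1}$ and then once in the $e_1$ direction with parameter $H_q$. Each step squares the sum and replaces the phase $TF$ by a first-order difference along the chosen direction. After $q$ iterations, Taylor expansion in the shifts gives
\begin{equation*}
\Delta^{q}_{h}F(y)\;=\;\frac{h_1\cdots h_q}{M_*^q}\,D^{\mu}F(y)\;+\;(\text{lower-order corrections in }1/M_*),
\end{equation*}
so that $|S|^{2^{q}}$ is bounded by a diagonal contribution plus a constant times $\prod_j H_j^{-1}$ and suitable powers of $M_{*}$ times
\begin{equation*}
\sum_{h_1,\ldots,h_q}\Big|\sum_{m}G^{h}(m/M_{*})\,e\big(T\Delta^{q}_{h}F(m/M_{*})\big)\Big|.
\end{equation*}

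Next, for each fixed $h=(h_1,\ldots,h_q)$, I will apply Poisson summation to the inner exponential sum and then the stationary phase method (Lemma \ref{app:lemma:hor}) to each oscillatory integral $\int G^{h}(y)\,e\big(T\Delta^{q}_{h}F(y)-M_{*}\langle n,y\rangle\big)\,dy$. The Hessian of the phase is $T\nabla^{2}\Delta^{q}_{h}F$, whose determinant is comparable, by Taylor expansion, to $T^{d}(h_{1}\cdots h_{q}/M_{*}^{q})^{d}\det\nabla^{2}D^{\mu}F(y)$. The lower bound \eqref{lowerforf} combined with the quantitative inverse function theorem (Lemma \ref{app:lemma:1}) then produces a unique critical point for each admissible Poisson frequency $n$, and stationary phase supplies a principal contribution of size roughly $[T(h_1\cdots h_q/M_*^q)]^{-d/2}|\det\nabla^{2}D^{\mu}F|^{-1/2}$.

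Finally, I will sum the stationary phase contributions over the shifts $h$ and over the admissible frequencies $n$, and then extract the $2^{q}$-th root. The resulting estimate depends on $H_1,\ldots,H_q$; optimizing these by equating the $A$-process loss against the $B$-process gain pins down each $H_j$ to a definite power of $T$ and $M_*$ and produces the exponents $d/(2Q+2(Q-1)d)$ and $2(Q-1)d+2Q-q-2$ in \eqref{kkk}. The elaborate exponent of $K$ aggregates the $K$-losses from the derivative bounds \eqref{aaa}--\eqref{upperforf} used at each of the $q$ differencing steps and in the stationary phase remainder, tempered by the $K$-gain from the nondegeneracy \eqref{lowerforf}; the thresholds \eqref{res-M} and \eqref{restriction} mark the region in which this optimized bound beats the trivial estimate $M_{*}^{d}$.

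The chief technical obstacle will be uniform control of $\Delta^{q}_{h}F$: although its leading Taylor term is $(h_1\cdots h_q/M_*^q)\,D^{\mu}F$, the remainder involves high-order derivatives of $F$ weighted by large negative powers of $K$ via \eqref{upperforf}, and forcing the remainder to remain dominated by the leading term throughout the stationary phase analysis is precisely what drives the awkward exponent in \eqref{res-M}. A secondary difficulty is that the boundary-distance hypothesis \eqref{bdy-dist} must survive all $q$ shifts $h$, and each critical point produced by Lemma \ref{app:lemma:1} must be verified to lie inside the support of $G^{h}$ before stationary phase can be applied; outside that range one instead uses Lemma \ref{app:lemma:2} (integration by parts) to show that the Poisson contribution is negligible.
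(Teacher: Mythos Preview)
Your outline is essentially the paper's own argument: an $A^qB$-process with differencing once along $e_1$ and $q-1$ times along $e_d$, followed by Poisson summation and stationary phase on the resulting integrals, then an optimization over the shift parameter.  Two implementation details differ from the paper and are worth flagging.  First, the paper uses a \emph{single} shift parameter $H$ for all $q$ Weyl steps (via the packaged iterated Weyl--Van der Corput inequality, \cite[Lemma~2.2]{guo}), rather than separate $H_1,\ldots,H_q$; the single-parameter version simplifies the final balancing to one equation and is what produces exactly the exponent in \eqref{kkk}.  Second, and more substantively, your claim that Lemma~\ref{app:lemma:1} yields a \emph{unique} critical point for each Poisson frequency is not justified: the paper only shows the critical points are locally isolated and bounds their number by $\lesssim K^{-((5q+4)d+13-1/(d-1))d}$ via a covering argument, and this extra power of $K$ is part of the final exponent.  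The paper also rescales by $K^8$ before the stationary phase step and invokes the Sogge--Stein form (Lemma~\ref{app:lemma:3}) rather than Lemma~\ref{app:lemma:hor}; this makes the derivative bounds \eqref{gg} uniform and is the cleanest way to carry the $K$-dependence through.
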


\begin{proof} Let $H$ be a parameter satisfying
\begin{equation}
1<H\leqslant c_5 K^{(6(5q+4)d^3-5(5q-7)d^2-5(q+12)d-6)/(2(d-1)d)}M_*
\label{H}
\end{equation}
with $c_5<1$ chosen (later) to be sufficiently small. Then
$H\leqslant M_*$.  We apply to $S(T, M_*; G, F)$ the iterated
one-dimensional Weyl-Van der Corput inequality
 with $r_1=e_1$ and $r_j=e_d$ ($j=2, \ldots,
q$) (see \cite[Lemma 2.2]{guo} for this inequality and notations
like $G_q$, $F_q$, $\mathscr{H}$, and $\Omega_q$ that we will use
below). Then we need to estimate $S_4:=S(\mathscr{H}TM_*^{-q}, M_*;
G_q, F_q)$. Applying the Poisson summation formula followed by a
change of variables yields
\begin{equation*}
S_4=\sum_{p\in \mathbb{Z}^d} K^{8d}M_*^d\int_{\mathbb{R}^d}
\Psi_q(z)e(\mathscr{H}TM_*^{-q}F_q(K^8 z)-K^{8}M_* \langle p,
z\rangle)\,dz,
\end{equation*}
where $\Psi_q(z)=G_q(K^{8}z)$. It is obvious that
\begin{equation}
\supp(\Psi_q)\subset K^{-8}\Omega_q \subset c_0 K^{-8}B(0, 1).
\label{domain1}
\end{equation}
By \eqref{bdy-dist} we also have
\begin{equation}
\dist(\supp(\Psi_q), (K^{-8}\Omega_q)^{c})\gtrsim
K^{d+2q+5-1/(d-1)}. \label{domain2}
\end{equation}

By the assumption \eqref{upperforf} there exists a constant $A_1$
such that
\begin{equation*}
|\nabla_z(F_q(K^{8}z))|\leqslant (A_1/2) K^{3-8q}.
\end{equation*}
We split $S_4$ into two parts
\begin{equation*}
S_4=\sum_{|p|<A_1K^{-8q-5}\mathscr{H}TM_*^{-q-1}}+\sum_{|p|\geqslant
A_1K^{-8q-5}\mathscr{H}TM_*^{-q-1}}=\textrm{:}S_5+R_5.
\end{equation*}

It is not hard to prove, by integration by parts (Lemma
\ref{app:lemma:2}), that
\begin{equation}
R_5\lesssim K^{-(d+2q+13-1/(d-1))(d+1)}M_*^{-1}.\label{R5}
\end{equation}

Define $\lambda_1=K^{3-8q}\mathscr{H}TM_*^{-q}$ and
\begin{equation*}
\Phi_q(z, p)=(\mathscr{H}TM_*^{-q}F_q(K^8 z)-K^{8}M_* \langle p,
z\rangle)/\lambda_1,
\end{equation*}
then
\begin{equation}
S_5=K^{8d}M_*^d \sum_{|p|<A_1K^{-8}\lambda_1M_*^{-1}}
\int_{\mathbb{R}^d} \Psi_q(z)e(\lambda_1 \Phi_q(z, p))\,dz.
\label{S5}
\end{equation}

To estimate $S_5$ we discuss in two cases.

CASE 1: $\lambda_1\geqslant K^{-4(5q+4)d-29+4/(d-1)}$.

For all $z\in K^{-8}\Omega_q$, by \eqref{aaa}, \eqref{upperforf},
and \eqref{lowerforf}, we get
\begin{equation}
D^{\nu}_z\Psi_q(z)\lesssim K^{-(d+2q+5-1/(d-1))|\nu|}, \label{ff}
\end{equation}
\begin{equation}
D^{\nu}_z \Phi_q(z, p)\lesssim \bigg\{
\begin{array}{ll}
K^{-8} & \textrm{for $\nu=0$},\\
1 & \textrm{for $|\nu|\geqslant 1$},
\end{array}   \label{gg}
\end{equation}
and
\begin{equation}
|\det\big(\nabla^2_{zz}\Phi_q(z, p) \big)|\gtrsim
K^{(5q+4)d+5-1/(d-1)}. \label{cc}
\end{equation}

To prove this lower bound \eqref{cc} we first note, by using the
definition of $F_q$ and the mean value theorem, that for $\mu=(1, 0,
\ldots, 0, q-1)\in \mathbb{N}_0^d$
\begin{equation*}
\frac{\partial^2}{\partial z_{l_1} \partial z_{l_2}} (\Phi_q(z,
p))=K^{8q+13}\frac{\partial^{2} D^{\mu}F}{\partial x_{l_1} \partial
x_{l_2}}(K^{8}z)+O(K^{-8}\frac{H}{M_*}).
\end{equation*}
The two terms on the right are $\lesssim$ $1$ and $c_5
K^{(5q+4)d+5-1/(d-1)}$ respectively (implied by \eqref{upperforf}
and \eqref{H}). Thus
\begin{equation*}
\det(\nabla^2_{zz}(\Phi_q(z,
p)))=K^{(8q+13)d}\det(\nabla^2D^{\mu}F)+O(c_5
K^{(5q+4)d+5-1/(d-1)}).
\end{equation*}
By \eqref{lowerforf}, we get \eqref{cc} if we pick a sufficiently
small $c_5$.

With \eqref{domain1}, \eqref{domain2}, \eqref{ff}, \eqref{gg}, and
\eqref{cc}, we can estimate the integrals in $S_5$. Let us fix an
arbitrary $p\in\mathbb{Z}^d$ with $|p|<A_1K^{-8}\lambda_1M_*^{-1}$.

We first need to estimate the number of critical points of the phase
function $\Phi_q(z, p)$. Denote $\widetilde{p}=K^{8}M_*p/\lambda_1$
and $F(z)=K^{8q-3}\nabla_z(F_q(K^{8}z))$, then $\nabla_z \Phi_q(z,
p)=F(z)-\widetilde{p}$ and the critical points are determined by the
equation
\begin{equation*}
F(z)=\widetilde{p} \quad \textrm{for $z\in K^{-8}\Omega_q$}.
\end{equation*}
The bounds \eqref{gg} and \eqref{cc} imply that the mapping $F$ and
its components $F_j$ satisfy
\begin{equation*}
D^{\nu}F_j(z)\lesssim 1   \quad \textrm{for $|\nu|\leqslant 2$,
$j=1$, \ldots, $d$},
\end{equation*}
and
\begin{equation*}
|\det(\nabla_z F(z))|\gtrsim K^{(5q+4)d+5-1/(d-1)}.
\end{equation*}
By \eqref{domain2}, we know that $\supp(\Psi_q)$ is strictly smaller
than the domain $K^{-8}\Omega_q$ and the distance between their
boundary is larger than $a_1 K^{d+2q+5-1/(d-1)}$ for some positive
constant $a_1$. Let $r_0=a_1 K^{d+2q+5-1/(d-1)}/2$. By Taylor's
formula, there exists a positive constant $a_2$ ($<a_1/2$) such that
if $\tilde{z}$ is a critical point in $(\supp(\Psi_q))_{(r_0)}$
then, for any $z\in B(\tilde{z}, a_2 K^{d+2q+5-1/(d-1)})$,
\begin{equation}
|\nabla_z \Phi_q(z, p)|\gtrsim K^{(5q+4)d+5-1/(d-1)}|z-\tilde{z}|.
\label{taylor}
\end{equation}
Applying Lemma \ref{app:lemma:1} to $F$ with $r_0$ as above yields
two positive constants $a_3$ ($<a_2/2$) and $a_4$ such that if
\begin{equation*}
r_1=a_3 K^{(5q+4)d+5-1/(d-1)}\quad \textrm{and}\quad r_2=a_4
K^{2((5q+4)d+5-1/(d-1))},
\end{equation*}
then $F$ is bijective from $B(z, 2r_1)$ to an open set containing
$B(F(z), 2r_2)$ for any $z\in (\supp(\Psi_q))_{(r_0)}$. It follows,
simply by a size estimate, that the number of critical points in
$(\supp(\Psi_q))_{(r_1)}$ is $\lesssim$ $(K^{-8}/r_1)^{d}\lesssim
K^{-((5q+4)d+13-1/(d-1))d}$.

For the $p$ that we have fixed, let $Z_j$ ($j=1, \ldots, J(p)$) be
all critical points in $(\supp(\Psi_q))_{(r_1)}$ of the phase
function $\Phi_q(z, p)$ and $\chi_j(z)=\chi_0((z-Z_j)/(c_6 r_1))$
with $c_6$ chosen below. Then the integral in $S_5$ can be
decomposed as
\begin{equation}
\int \Psi_q(z)e(\lambda_1 \Phi_q(z, p))\,dz =S_6+R_6,\label{S5-1}
\end{equation}
where
\begin{equation*}
S_6=\sum_{j=1}^{J(p)}\int \chi_j(z) \Psi_q(z)e(\lambda_1 \Phi_q(z,
p))\,dz
\end{equation*}
and
\begin{equation*}
R_6=\int \big(1-\sum_{j=1}^{J(p)}\chi_j(z)\big) \Psi_q(z)e(\lambda_1
\Phi_q(z, p))\,dz.
\end{equation*}

It follows from integration by parts (Lemma \ref{app:lemma:2}) and
\eqref{taylor} that
\begin{equation}
R_6\lesssim K^{-8d-4((5q+4)d+7-1/(d-1))N}\lambda_1^{-N}.
\label{boundR6}
\end{equation}

As to $S_6$, for each $1\leqslant j\leqslant J(p)$, let $\phi_j(z,
p)=\Phi_q(z, p)-\Phi_q(Z_j, p)$. By Lemma \ref{app:lemma:3}, if
$c_6$ is sufficiently small then
\begin{align}
\big|\int \chi_j(z) \Psi_q(z)e(\lambda_1 \Phi_q(z, p))\,dz\big|&= \nonumber \\
  \big|\int \chi_j(z) \Psi_q(z)e(\lambda_1 \phi_j(z, p))\,dz\big|&\lesssim K^{-((5q+4)d+5-1/(d-1))/2}\lambda_1^{-d/2}.\label{S5-2}
\end{align}
Hence
\begin{equation}
S_6\lesssim K^{-8d-((5q+4)d+5-1/(d-1))(d+1/2)}\lambda_1^{-d/2}.
\label{S5int}
\end{equation}

Noticing that we have assumed $\lambda_1\geqslant
K^{-4(5q+4)d-29+4/(d-1)}$ in the Case 1, it is then easy to check
that the bound \eqref{boundR6} of $R_6$ is less than the bound
\eqref{S5int} of $S_6$ if $N$ is sufficiently large. Hence, by
\eqref{S5}, \eqref{S5-1}, \eqref{S5int}, and \eqref{boundR6}, we get
the following bound of $S_5$
\begin{equation*}
S_5\lesssim K^{8d}M_*^d((K^{-8}\lambda_1M_*^{-1})^d+1)
K^{-8d-((5q+4)d+5-1/(d-1))(d+1/2)}\lambda_1^{-d/2},
\end{equation*}
\begin{equation}
\lesssim
K^{-((5q+4)d+5-1/(d-1))(d+1/2)}(K^{-8d}\lambda_1^{d/2}+M_*^d\lambda_1^{-d/2}).\label{case1}
\end{equation}

CASE 2: $\lambda_1< K^{-4(5q+4)d-29+4/(d-1)}$.

Within this range of $\lambda_1$, the assumption \eqref{res-M}
implies $K^{-8}\lambda_1M_*^{-1}<1$, hence the trivial estimate of
$S_5$ (together with \eqref{domain1} and \eqref{ff}) yields
\begin{equation}
S_5\lesssim M_*^d\leqslant K^{-(4(5q+4)d+29-4/(d-1))d/2}M_*^d
\lambda_1^{-d/2}. \label{case2}
\end{equation}

Combining the bounds of $S_5$ from Case 1 and 2 (namely,
\eqref{case1} and \eqref{case2}) yields
\begin{equation*}
\begin{split}
S_5&\lesssim K^{-8d-((5q+4)d+5-1/(d-1))(d+1/2)}\lambda_1^{d/2}\\
   &\quad +K^{-(4(5q+4)d+29-4/(d-1))d/2}M_*^d\lambda_1^{-d/2}.
\end{split}
\end{equation*}

Note that this bound of $S_5$ is larger than the bound \eqref{R5} of
$R_5$ no matter whether $\lambda_1\leqslant 1$ or $\lambda_1>1$. It
follows that
\begin{equation*}
\begin{split}
S_4=S_5+R_5 &\lesssim K^{-(4q+13/2)d-((5q+4)d+5-1/(d-1))(d+1/2)}(\mathscr{H}TM_*^{-q})^{d/2}\\
            &\quad
            +K^{-2d((5q+4)d+8-2q-1/(d-1))}(\mathscr{H}TM_*^{-q-2})^{-d/2},
\end{split}
\end{equation*}
where we have already used the definition of $\lambda_1$.

Plugging this bound of $S_4$ into the Weyl-Van der Corput inequality
that we used at the beginning gives
\begin{equation}
\begin{split}
|S(T, M_*; G, F)|^{Q}&\lesssim
M_*^{dQ}H^{-1}+H^{(1-1/Q)d}T^{d/2}M_*^{d(Q-1-q/2)}\cdot\\
 &\quad K^{-(4q+13/2)d-((5q+4)d+5-1/(d-1))(d+1/2)}+\textrm{E},
\label{cccc}
\end{split}
\end{equation}
where
\begin{equation*}
\textrm{E}=K^{-2d((5q+4)d+8-2q-1/(d-1))}H^{-2+2/Q}T^{-d/2}M_*^{d(Q+q/2)}.
\end{equation*}

In order to balance the first two terms on the right side of
\eqref{cccc} we let
\begin{equation*}
H=c_5(K^{\frac{2(5q+4)d^3+(3q+19)d^2-(13q+24)d-6}{2(d-1)}}T^{-d/2}M_*^{(q/2+1)d})^{Q/(Q+(Q-1)d)}.
\end{equation*}
We then need to check that \eqref{H} is satisfied with this choice
of $H$. First, $H>1$ since we can assume
\begin{equation}
T<c_7 K^{(2(5q+4)d^3+(3q+19)d^2-(13q+24)d-6)/(d(d-1))} M_*^{q+2}
\label{remove1}
\end{equation}
with a sufficiently small $c_7$ (otherwise the trivial bound of
$S(T, M_*; G, F)$, {\it i.e.} $M_*^d$, is better than \eqref{kkk}).
Second, the assumption \eqref{restriction} implies the second
inequality in \eqref{H}.

With the choice of $H$ as above and \eqref{remove1}, we get
\begin{equation}
\begin{split}
H^{-2+2/Q}T^{-d/2}&M_*^{d(Q+q/2)}\leqslant\\
&K^{-\frac{2(5q+4)d^3+(3q+19)d^2-(13q+24)d-6}{2(d-1)}}M_*^{(Q-1)d}H^{d-1}.
\end{split}\label{remove3}
\end{equation}
It then follows from \eqref{H} and \eqref{remove3} that
\begin{equation*}
\textrm{E}\leqslant M_*^{Qd}H^{-1}.
\end{equation*}
Applying this bound to \eqref{cccc} finally yields the desired bound
\eqref{kkk}.
\end{proof}

%%%%%%%%%%%%%%%%%%%%%%%%%%%%%%%%%%%%%%%%%%%%%%%%%%%%%%%%%%%%%%%%%%%%%%%%%%%%%%%%%%%%%%%%%%%%%

\subsection*{Acknowledgments}
I would like to express my sincere gratitude to Professor Andreas
Seeger for his valuable advice and great help. I also thank my
friends Weiyong He and Yuanqi Wang for helpful discussion on some
geometric questions.

%%%%%%%%%%%%%%%%%%%%%%%%%%%%%%%%%%%%%%%%%%%%%%%%%%%%%%%%%%%%%%%%%%%%%%%%%%%%%%%%%%%%%%%%%%%%%

\end{document}